\DeclareMathAlphabet{\pazocal}{OMS}{zplm}{m}{n}
\newtheorem{theorem}{Theorem}[section]
\newtheorem{lemma}[theorem]{Lemma}
\newtheorem{proposition}[theorem]{Proposition}
\theoremstyle{definition}
\newtheorem{definition}[theorem]{Definition}
\newtheorem{main}{Theorem}
\theoremstyle{remark}
\newtheorem{remark}[theorem]{Remark}
\numberwithin{equation}{section}
\newcommand{\set}[1]{\left\{#1\right\}} 
\newcommand{\qtq}[1]{\quad \text{#1}\quad }
\newcommand{\bj}[1]{\left(#1\right)}
\def\@tocline#1#2#3#4#5#6#7{\relax
	\ifnum #1>\c@tocdepth
	\else
	\par \addpenalty\@secpenalty\addvspace{#2}
	\begingroup \hyphenpenalty\@M
	\@ifempty{#4}{
		\@tempdima\csname r@tocindent\number#1\endcsname\relax
	}{
		\@tempdima#4\relax
	}
	\parindent\z@ \leftskip#3\relax \advance\leftskip\@tempdima\relax
	\rightskip\@pnumwidth plus4em \parfillskip-\@pnumwidth
	#5\leavevmode\hskip-\@tempdima
	\ifcase #1
	\or\or \hskip 1em \or \hskip 2em \else \hskip 3em \fi
	#6\nobreak\relax
	\dotfill\hbox to\@pnumwidth{\@tocpagenum{#7}}\par
	\nobreak
	\endgroup
	\fi}
\title{ }
\author[Y. Hu]{YueCai Hu}
\address{\footnotesize{School of Mathematical Sciences, Shenzhen University, Shenzhen 518060, People's Republic of China.}}
\email{2250201005@email.szu.edu.cn}
\author[R. Alcaraz Barrera]{Rafael Alcaraz Barrera}
\address{\footnotesize{Instituto de F\'isica, Universidad Aut\'onoma de San Luis Potos\'i. Av. Parque Chapultepec 1570, Privadas del Pedregal, 78295 San Luis Potos\'i, S.L.P. M\'exico.}}
\email{ralcaraz@ifisica.uaslp.mx}
\author[Y. Zou]{Yuru Zou$^*$}
\address{School of Mathematical Sciences,
	Shenzhen University,
	Shenzhen 518060,
	People's Republic of China}
\email{yuruzou@szu.edu.cn}
\subjclass[2020]{Primary 11A63; Secondary 11K55, 37B10, 28A80, 28A78}
\keywords{double-base expansion, unique expansion, quasi-greedy expansion, quasi-lazy expansion, Cantor sets, Hausdorff dimension, Baire Category}
	\thanks{$^*$Corresponding Author}
\begin{document}
	\begin{abstract}
		Given two real numbers $q_0,q_1$ with $q_0, q_1 > 1$ satisfying $q_0+q_1 \geq q_0q_1$, we call a sequence $(d_i)$ with $d_i\in\set{0,1}$ a \emph{$(q_0,q_1)$-expansion} or a \emph{double-base expansion} of a real number $x$ if
		\[
		x=\mathop{\sum}\limits_{i=1}^{\infty} \frac{d_{i}}{q_{d_1}q_{d_2}\cdots q_{d_i}}.
		\]
		When $q_0=q_1=q$, the set of \emph{univoque bases} is given by the set of $q$'s such that $x = 1$ has exactly one $(q, q)$-expansion. The topological, dimensional and symbolic properties of such sets and their corresponding sequences have been intensively investigated. 
		In our research, we study the topological and  dimensional properties of the set of univoque bases for double-base expansions.
		This problem is more complicated, requiring new research strategies. Several
		new properties are uncovered. In particular, we show that the set of univoque bases in the double base setting is a meagre set with   full Hausdorff dimension.
	\end{abstract}
	\maketitle
\section{Introduction}
\label{sec:intro}
Since their introduction in the seminal papers by R\'{e}nyi \cite{Ren1957} and Parry \cite{Par1960}, the theory of \emph{expansions in non-integer bases}, colloquially known as \emph{$\beta$-expansions} or \emph{$q$-expansions}, has received much attention from researchers in several areas of mathematics. In particular, $q$-expansions have been studied extensively during the last thirty years within number theory, diophantine approximation, combinatorics on words, ergodic theory, fractal geometry and symbolic dynamics.

Let us consider the mentioned setting for the case when $q \in (1, 2]$. Set
\[
\Sigma_{2}:= \set{0,1}^{\mathbb{N}} = \set{(d_i): d_i \in \set{0,1} \qtq{for each} i \in \mathbb{N}}.
\]
In this paper, $\mathbb{N}$ denotes the set of natural numbers and $\mathbb{N}_0$ the set of non-negative integers.
Given $q\in(1,2]$, we call a sequence $(d_i) \in \Sigma_{2}$ a \emph{$q$-expansion of $x$} if
\begin{equation}
	\label{eq:q-expansion}
	x=\pi_{q}((d_i)):=\mathop{\sum}\limits_{i=1}^\infty\dfrac{d_i}{q^i}.
\end{equation}
It is well known  that for $q\in(1,2]$, a number $x$ has an expansion in base $q$ if and only if $x\in [0, 1/(q-1)]$.  
Probably, the most remarkable attribute of $q$-expansions is the fact that \color{black} they are almost never unique. \color{black} 
In \cite{ErdJooKom1990, Sid2009} it was shown that for any $k\in \mathbb{N} \cup\{\aleph_{0}\}\cup\{2^{\aleph_{0}}\}$ there always exists a base $q\in(1, 2)$ and $x\in [0, 1/(q-1)]$ such that $x$ has precisely $k$ different $q$-expansions. Also, in \cite{Sid2003,DajdeV2007} the generic behavior of the set of $q$-expansions is described: namely, for any $q\in(1,2)$, almost every $x\in[0, 1/(q-1)]$ with respect to the Lebesgue measure has uncountably many $q$-expansions. The situation described previously is substantially different in comparison to the case $q=2$, where every number within the unit interval, except for the countable set of exceptions given by the dyadic rationals, has a unique $2$-expansion.

The study of expansions in non-integer bases gained new impetus after the research performed by Erd\H{o}s, Horv{\'a}th, Jo{\'o}, Komornik and Loreti in \cite{ErdJooKom1990,ErdHorJoo1991,ErdJoo1992,KomLor1998}, where the set of bases $q \in (1,2]$ for which $x=1$ has a unique $q$-expansion was studied. The base $q$ is called a \emph{univoque base}. Since then, the 
topological and combinatorial
properties of the set of univoque bases $\mathcal{U}$  have received significant attention; see \cite{ErdJooKom1990,ErdHorJoo1991,ErdJoo1992,GleSid2001,KomLor2007,DeVKom2009,KomKonLi2015,AlcBakKon2016,DeVKomLor2016,All2017,AlcBar2021,DeVKomLor2022} and references therein.
We will sum up some significant properties of the set $\mathcal{U}$ below. To do so, we need  to recall some notations. 

By a word $\omega$ we mean a finite string of digits $\omega_1\cdots\omega_n$ with each $\omega_i\in \{0,1\}$.  The length of $\omega$  is denoted  by $|\omega|$. The \emph{empty word} will be denoted by $\epsilon$. We define the \emph{concatenation} of two finite words $\upsilon = u_1 \ldots u_m$ and $\nu = v_1 \ldots v_n$ as the word $\upsilon \nu = u_1 \ldots u_m v_1 \ldots v_n$. Also, for a finite word $\omega$ and $n \in \mathbb{N}$, $\omega^n$ stands for the word obtained by concatenating $\omega$ with itself $n$ times, and $\omega^\infty$ stands for the sequence obtained by concatenating $\omega$ with itself infinitely many times. \color{black} Let $\omega = \omega_1 \cdots \omega_n$ be a word where $\omega_n = 0$. We define $\omega^+ = \omega_1 \cdots \omega_n^+ = \omega_1 \cdots (\omega_n + 1)$. Similarly, if $\omega_n = 1$, we define $\omega^- = \omega_1 \cdots \omega_n^- = \omega_1 \cdots (\omega_n - 1)$. \color{black}
Notice that we can also define the concatenation of a finite word $\omega = w_1 \ldots w_m$ with a sequence $(d_i)$ as the sequence $\omega(d_i) = w_1\ldots w_m d_1 \ldots$. 
Given a sequence $(d_i) \in \Sigma_2$, the \emph{conjugate of $(d_i)$} is given by the sequence $\overline{(d_i)} = (1-d_i)$. Throughout this paper we will use the lexicographic ordering on sequences and words. We write $\left(c_i\right) \prec\left(d_i\right)$ if there exists $k \in \mathbb{N}$ such that $c_i=d_i$ for $i=$ $1, \ldots, k-1$ and $c_k<d_k$. We write $\left(c_i\right) \preccurlyeq\left(d_i\right)$ if $\left(c_i\right) \prec\left(d_i\right)$ or $\left(c_i\right)=\left(d_i\right)$. We also write $\left(d_i\right) \succ\left(c_i\right)$ if $\left(c_i\right) \prec\left(d_i\right)$, and $\left(d_i\right) \succcurlyeq\left(c_i\right)$ if $\left(c_i\right) \preccurlyeq\left(d_i\right)$.
We say that a sequence $(d_i) \in \Sigma_2$ is:
\begin{enumerate}[($i$)]
	\item \emph{finite} if there exists $K \in \mathbb{N}$ such that 
	\color{black}$d_K=1$, and $d_k = 0$ for all $k> K$ \color{black} and \emph{infinite} otherwise;
	\item \emph{co-finite} if the conjugate of $(d_i)$ is finite and \emph{co-infinite} otherwise;
	\item \emph{doubly-infinite} if $(d_i)$ is infinite and co-infinite.
\end{enumerate}
Observe that the sequences $0^\infty$ and $1^\infty$ are doubly-infinite.

\vspace{0.5em} For any base $q\in(1,2]$, $x=1$ has a largest $q$-expansion with respect to the lexicographic order denoted by ${\beta}(q)$ and a largest infinite $q$-expansion denoted by $\alpha(q)$; see \cite{BaiKom2007, DeVKomLor2016}. Such expansions are  called the \emph{greedy} and \emph{quasi-greedy expansions of $1$ in base $q$} respectively. A number theoretic algorithm to produce such expansions can be found in \cite{BaiKom2007,DeVKom2016}, and its dynamical interpretation can be found in \cite{DajKra2002}.

Now, we give the following:

\begin{definition}
	\label{def:univoqueonebase}
	\leavevmode
	\begin{enumerate}[($i$)]
		\item $\mathcal{U} = \set{q \in (1,2]: 1 \text{ has a unique }q\text{-expansion}}$,
		\item $\overline{\mathcal{U}}$ is the topological closure of $\mathcal{U}$,
		\item $\mathcal{V} = \set{q \in (1,2]: 1 \text{ has a unique doubly-infinite }q\text{-expansion}}$.
	\end{enumerate}
\end{definition}

In \cite[Theorem 2.5, Theorem 3.9]{DeVKomLor2016} the following symbolic characterizations of the sets $\mathcal{U}$, $\overline{\mathcal{U}}$ and $\mathcal{V}$ in terms of the quasi-greedy expansion of $1$ in base $q$ are given.

\begin{theorem}\label{th:lexchar}
	\leavevmode
	\begin{enumerate}[($i$)]
		\item $\mathcal{U}=\set{q\in(1, 2): \overline{\alpha(q)}\prec\sigma^n(\alpha(q))\prec \alpha(q)\quad \text{for} \quad n\in \mathbb{N}}\cup\set{2},$
		\item $\overline{\mathcal{U}}=\set{q\in(1, 2]: \overline{\alpha(q)}\prec \sigma^n(\alpha(q))\preccurlyeq \alpha(q)\quad \text{for} \quad  n\in\mathbb{N}_0},$
		\item $\mathcal{V} =\set{q\in(1, 2]: \overline{\alpha(q)}\preccurlyeq \sigma^n(\alpha(q))\preccurlyeq \alpha(q)\quad \text{for}\quad n\in \mathbb{N}_0}.$
	\end{enumerate}
\end{theorem}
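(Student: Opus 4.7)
The plan is to prove the three characterisations in parallel, exploiting their uniform structure. All three describe $q$ by asking that every shift $\sigma^n(\alpha(q))$ lie in a lexicographic strip between $\overline{\alpha(q)}$ and $\alpha(q)$; only the strictness of the inequalities varies, corresponding to whether one excludes all alternative expansions, their topological closure, or merely non--doubly-infinite ones. My main tool will be the classical admissibility criterion: a sequence $(d_i)\in\Sigma_2$ is a $q$-expansion of some $x\in[0,1/(q-1)]$ exactly when each shift $\sigma^n((d_i))$ respects the bounds given by $\alpha(q)$ at positions where the digit is $0$ and by $\overline{\alpha(q)}$ at positions where the digit is $1$.

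For part (i), I would first show that $q\in\mathcal{U}$ forces $\beta(q)=\alpha(q)$: a finite greedy expansion $\beta(q)=a_1\cdots a_{n-1}10^\infty$ immediately produces a second expansion of $1$ by completing $a_1\cdots a_{n-1}0$ with a $q$-expansion of $1$ shifted to position $n+1$, contradicting uniqueness. With $\beta(q)=\alpha(q)$ in hand, the upper inequality $\sigma^n(\alpha(q))\prec\alpha(q)$ for $n\geq 1$ is exactly Parry's greedy admissibility. For the lower inequality, suppose $\sigma^n(\alpha(q))\preccurlyeq\overline{\alpha(q)}$ for some $n\geq 1$; then at position $n$ one can flip the digit $\alpha(q)_{n+1}=1$ down to $0$ and compensate by appending a tail strictly above $\sigma^{n+1}(\alpha(q))$, which is possible precisely because the hypothesis gives enough room below $1/(q-1)$. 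This yields a second $q$-expansion of $1$, a contradiction. Conversely, assuming both strict inequalities and a putative alternative expansion $(d_i)\neq\alpha(q)$ of $1$, let $k$ be the first position of disagreement (so $d_k=0$, $\alpha(q)_k=1$); subtracting the two series yields $\sum_{j\geq 1}(d_{k+j}-\alpha(q)_{k+j})/q^j=1$, and the hypothesis $\sigma^k(\alpha(q))\succ\overline{\alpha(q)}$ produces the strict numerical inequality $\sum_{j\geq 1}(d_{k+j}-\alpha(q)_{k+j})/q^j<1$, the required contradiction.

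Parts (ii) and (iii) are handled by the same carry-flip machinery with adjusted strictness. For (ii), I would pass to the closure via the left-continuity and strict monotonicity of $q\mapsto\alpha(q)$: taking lexicographic limits in (i) relaxes the right inequality to $\preccurlyeq$, and conversely any $q$ satisfying the relaxed bounds is approximated from within $\mathcal{U}$ by small perturbations of the candidate $\alpha(q)$. For (iii), the argument of (i) is repeated but now only doubly-infinite alternative expansions need to be excluded; tails $0^\infty$ or $1^\infty$ are permitted, which corresponds precisely to replacing $\prec$ by $\preccurlyeq$ on the lower side as well. I expect the main obstacle to be the careful accounting inside the carry-flip construction: cleanly translating the numerical identity of the two series into the asserted lexicographic comparison between $\sigma^k(\alpha(q))$ and $\overline{\alpha(q)}$ requires tracking when strict versus non-strict inequalities transfer between the analytic and symbolic sides, which must be done uniformly enough to accommodate all three cases simultaneously.
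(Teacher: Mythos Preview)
The paper does not prove Theorem~\ref{th:lexchar}; it quotes it from \cite[Theorems~2.5 and~3.9]{DeVKomLor2016} as background for the single-base case, and then develops its own results in the double-base setting. So there is no proof in this paper to compare your proposal against.

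As for your sketch on its own merits: the plan for~(i) and~(iii) is along the standard lines used in the literature (Parry admissibility plus a carry/flip construction to produce or rule out alternative expansions), and with care it can be made to work. Your treatment of~(ii), however, is too quick. You write that ``taking lexicographic limits in~(i) relaxes the right inequality to~$\preccurlyeq$'' and that ``any $q$ satisfying the relaxed bounds is approximated from within~$\mathcal{U}$ by small perturbations of~$\alpha(q)$.'' Neither direction follows so directly: the map $q\mapsto\alpha(q)$ is only one-sided continuous (Proposition~\ref{p:lexi of 1}), so limits of $\alpha(q_n)$ for $q_n\to q$ need not recover $\alpha(q)$, and showing that a $q$ with $\overline{\alpha(q)}\prec\sigma^n(\alpha(q))\preccurlyeq\alpha(q)$ lies in $\overline{\mathcal{U}}$ requires an explicit construction of approximating univoque bases (in \cite{DeVKomLor2016} this is done by truncating $\alpha(q)$ at carefully chosen positions and showing the truncations are quasi-greedy expansions for nearby $q'\in\mathcal{U}$). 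Your proposal names the ingredients but does not supply this construction, which is where the actual content of~(ii) lies.
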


In Theorem \ref{th:lexchar}, $\sigma:\Sigma_{2} \to \Sigma_{2}$ is the \emph{one-sided left shift map}, i.e., $\sigma((d_i)) = (d_{i+1})$; and for each $n \in \mathbb{N}$, $\sigma^n$ is the $n$-fold composition of $\sigma$ with itself.

\vspace{0.5em} Now, let us sum up several relevant results on the topology and the Hausdorff dimension of the sets $\mathcal{U}$, $\overline{\mathcal{U}}$ and $\mathcal{V}$ contained in \cite{KomLor2007,DeVKom2009,KomKonLi2015,DeVKomLor2016,DeVKomLor2022} in the following:

\begin{theorem}
	\label{thm:propertiesUV}
	Denote by $\lambda$ the Lebesgue measure in $\mathbb{R}$ and by $\dim_{\operatorname{H}}F$ the Hausdorff dimension of a subset $F \subset \mathbb{R}$. Then:
	\begin{enumerate}[($i$)]
		\item $\mathcal{U} \subsetneq \overline{\mathcal{U}} \subsetneq \mathcal{V}$.
		\item The sets $\mathcal{U}, \overline{\mathcal{U}}$ and $\mathcal{V}$ are uncountable \color{black} nowhere dense \color{black} \footnotemark and of the Baire's first category.
		\item The set $\mathcal{V}$ is compact.
		\item The set $\overline{\mathcal{U}}$ is a Cantor set.
		\item The sets:
		\begin{enumerate}
			\item $\overline{\mathcal{U}} \setminus \mathcal{U}$ is a countable dense subset of $\overline{\mathcal{U}}$.
			\item $\mathcal{V} \setminus \overline{\mathcal{U}}$ is a countable and discrete subset of $\mathcal{V}$.
		\end{enumerate}
		\item $\lambda(\mathcal{U}) = \lambda(\overline{\mathcal{U}}) = \lambda(\mathcal{V})= 0$.
		\item $\dim_{\operatorname{H}}\mathcal{U} =\dim_{\operatorname{H}}\overline{\mathcal{U}} = \dim_{\operatorname{H}}\mathcal{V} = 1$.
	\end{enumerate}
\end{theorem}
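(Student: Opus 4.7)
The plan is to derive all seven items from the lexicographic characterizations of Theorem \ref{th:lexchar}, combined with the monotonicity and one-sided continuity of the quasi-greedy map $q \mapsto \alpha(q)$ on $(1,2]$. Item (i) is essentially immediate: strict inequalities $\overline{\alpha(q)} \prec \sigma^n(\alpha(q)) \prec \alpha(q)$ imply the two weaker schemes defining $\overline{\mathcal{U}}$ and $\mathcal{V}$. Strict containment is then witnessed by specific bases whose $\alpha(q)$ is eventually periodic in such a way that one of the strict inequalities collapses to an equality; concrete such expansions supply elements of $\overline{\mathcal{U}} \setminus \mathcal{U}$ and of $\mathcal{V} \setminus \overline{\mathcal{U}}$.

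For the topological statements (ii)--(v) I would proceed as follows. Compactness of $\mathcal{V}$ in (iii) holds because its defining weak inequalities are preserved under pointwise limits of $\alpha(q)$, so $\mathcal{V}$ is closed in the compact set $[1,2]$. Nowhere density of $\mathcal{V}$ (and hence of $\mathcal{U}$ and $\overline{\mathcal{U}}$) follows by showing that arbitrarily close to any $q \in \mathcal{V}$ there is a base $q'$ whose quasi-greedy expansion violates the right-hand lexicographic inequality at some finite iterate $n$, and that this violation persists on an open neighborhood of $q'$. Combined with closedness this yields meagerness. Uncountability is produced by a standard Cantor-type coding: at infinitely many positions of $\alpha(q)$ one can freely choose $0$ or $1$ without breaking admissibility, giving a continuum-sized subset of $\mathcal{U}$. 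For (iv), I would check that $\overline{\mathcal{U}}$ is closed, nowhere dense, and has no isolated points, the last by approximating every $q \in \overline{\mathcal{U}}$ from both sides via suitable truncations and conjugate-tail modifications of $\alpha(q)$. Items (v)(a) and (v)(b) then follow because the equality cases in the characterizations of $\overline{\mathcal{U}}$ and $\mathcal{V}$ force $\alpha(q)$ to be periodic with a prescribed combinatorial structure, leaving only countably many possibilities; density in (a) and discreteness in (b) come from perturbing and isolating these periodic points, respectively.

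The main obstacle is (vii). Lebesgue measure zero in (vi) follows either from nowhere density together with a summation of the lengths of the complementary intervals of $\mathcal{V}$ in $(1,2]$, whose endpoints admit an explicit combinatorial description, or by embedding $\mathcal{V}$ into a subshift of entropy strictly less than $\log 2$ and pulling back to the parameter interval. For full Hausdorff dimension I would, given $\varepsilon > 0$, construct a compact self-similar subset $K_{\varepsilon} \subset \mathcal{U}$ generated by two long admissible blocks whose arbitrary concatenations remain quasi-greedy expansions of univoque bases; the associated subshift is a full shift on two symbols and carries topological entropy $(\log 2)/\ell$, where $\ell$ is the common block length. Transferring this entropy to Hausdorff dimension of the parameter set, via the locally bi-Lipschitz behaviour of $q \mapsto \alpha(q)$ on the corresponding cylinder, yields $\dimh K_{\varepsilon} \geq 1 - \varepsilon$, and letting $\varepsilon \to 0$ gives $\dimh \mathcal{U} = 1$. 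The delicate step, which I expect to be the principal difficulty, is a quantitative comparison between $|q_1 - q_2|$ and the first disagreement index of $\alpha(q_1)$ and $\alpha(q_2)$, namely the precise distortion estimate needed to convert symbolic entropy into geometric dimension.
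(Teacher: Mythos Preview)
This theorem is not proved in the paper. It is stated in the introduction as a summary of known results, with explicit attribution to \cite{KomLor2007,DeVKom2009,KomKonLi2015,DeVKomLor2016,DeVKomLor2022}; the paper's own contribution is the double-base generalisation in Theorems~\ref{t:main1}--\ref{t:main4}. Consequently there is no proof in the paper to compare your proposal against.

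For what it is worth, your sketch is broadly in line with the arguments in those references. In particular, the strategy you describe for (vii) is essentially that of \cite{KomKonLi2015}: for each large $N$ one builds a full-shift family of admissible blocks whose arbitrary concatenations remain in $\mathcal{U}$, and then transfers entropy to dimension through a quantitative comparison of $|q-q'|$ with the first-disagreement index of $\alpha(q)$ and $\alpha(q')$; the paper reproduces exactly this mechanism in the double-base setting (Lemmas~\ref{l:continu} and~\ref{l:Dist} and Theorem~\ref{t:hausdorff dimension}). One caution on (vi): your second proposed route---embedding $\mathcal{V}$ into a subshift of entropy strictly below $\log 2$ and pulling back---sits uneasily with (vii), since any embedding good enough to yield $\lambda(\mathcal{V})=0$ from an entropy bound would, by the same distortion control you invoke for (vii), tend to force $\dimh\mathcal{V}<1$. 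The argument actually used in \cite{KomLor2007,DeVKomLor2016} is your first route: one describes the connected components of $(1,2]\setminus\mathcal{V}$ explicitly (they are half-open intervals indexed by a countable combinatorial family) and checks that their lengths sum to~$1$.
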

\footnotetext{ A set $A$ is called nowhere dense if its closure has no interior points.}     
It is worth noticing that, in the proof of Theorem \ref{thm:propertiesUV}, the following lexicographic characterization of quasi-greedy expansions of $1$  given in \cite{BaiKom2007} plays a key role.
\begin{proposition}\label{p:lexi of 1}\mbox{}	
	The map
	\[
	\phi:\color{black}[1, 2]\color{black} \to \set{\alpha \in \Sigma_{2} : \sigma^n(\alpha) \preccurlyeq \alpha \text{ for all } n \in \mathbb{N}_0 \text { and } \alpha \text{ is  infinite}}
	\]
	given by $ \phi(q) = \alpha(q)$ is increasing, bijective and continuous from the left. Moreover, the inverse map $\phi^{-1}$ is increasing and continuous.	
\end{proposition}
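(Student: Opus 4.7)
My plan is to establish, in the following order, the image inclusion, strict monotonicity, surjectivity, and the two continuity statements, leveraging throughout the fact that $\pi_q((a_i)) = \sum_{i \geq 1} a_i/q^i$ is jointly continuous on $(1,\infty) \times \Sigma_{2}$ and strictly decreasing in $q$ for any nonzero sequence.

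First, for each $q \in (1,2]$, I would verify that $\alpha(q)$ belongs to the target set. Infiniteness is built into the definition of the quasi-greedy expansion. For the self-admissibility $\sigma^n(\alpha(q)) \preccurlyeq \alpha(q)$, I would argue by contradiction via a classical swap: supposing $\sigma^n(\alpha(q)) \succ \alpha(q)$ and letting $k$ be the first position of strict inequality (so the digit $a_{n+k}$ of $\alpha(q)$ equals $1$ while $a_k = 0$), one replaces $a_k = 0$ with $1$ and rebalances using the remaining tail past position $n+k$ to produce a strictly lex-larger infinite $q$-expansion of $1$, contradicting the maximality characterization of $\alpha(q)$.

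Strict monotonicity of $\phi$ would follow by comparing greedy/quasi-greedy algorithms digit by digit: using the standard dynamical description of $\beta(q)$ via the orbit of $1$ under $x \mapsto qx \bmod 1$, each digit $b_k(q)$ is non-decreasing in $q$ with strict increases at a countable dense set. Passing from $\beta(q)$ to $\alpha(q)$ (which differ only when $\beta(q)$ is finite, and there in a controlled way) yields strict monotonicity of $\alpha(q)$, hence injectivity of $\phi$. For surjectivity, given an infinite, self-admissible $\alpha = (a_i)$, the function $g(q) := \pi_q(\alpha)$ is continuous and strictly decreasing on $(1,\infty)$, with $g(q) \to \infty$ as $q \to 1^+$ (since $\alpha$ has infinitely many ones) and $g(q) \to 0$ as $q \to \infty$, so there is a unique $q_\star > 1$ with $g(q_\star) = 1$; the bound $\pi_q(\alpha) \leq 1/(q-1)$ confines $q_\star$ to $(1, 2]$. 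That $\alpha(q_\star) = \alpha$ follows because any strictly larger infinite $q_\star$-expansion of $1$ would, via the inequalities $\sigma^n(\alpha) \preccurlyeq \alpha$, contradict $\pi_{q_\star}(\alpha) = 1$.

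For left-continuity of $\phi$, if $q_n \nearrow q$ then by monotonicity the $\alpha(q_n)$ are lex-increasing and bounded above by $\alpha(q)$, so they converge coordinatewise to some $\alpha^\star \preccurlyeq \alpha(q)$; joint continuity of $\pi$ gives $\pi_q(\alpha^\star) = 1$, and the self-admissibility passes to the limit because $\{(\alpha, \beta) : \alpha \preccurlyeq \beta\}$ is closed in $\Sigma_{2} \times \Sigma_{2}$. Provided $\alpha^\star$ is infinite, the surjectivity step identifies $\alpha^\star = \alpha(q)$. For continuity of $\phi^{-1}$, given $\alpha^{(n)} \to \alpha$ in $\Sigma_{2}$ with $q_n := \phi^{-1}(\alpha^{(n)})$, any subsequential limit $q_\star$ of $(q_n)$ in the compact interval $[1,2]$ satisfies $\pi_{q_\star}(\alpha) = 1$ by joint continuity, which forces $q_\star = \phi^{-1}(\alpha)$ and hence $q_n \to \phi^{-1}(\alpha)$. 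I expect the main technical obstacles to be (a) executing the swap in the self-admissibility step so that the new sequence is simultaneously strictly larger and infinite, and (b) ruling out finiteness of the limit $\alpha^\star$ in the left-continuity argument, since the class of infinite sequences is not closed under coordinatewise convergence.
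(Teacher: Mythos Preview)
The paper does not supply its own proof of this proposition: it is quoted from Baiocchi--Komornik \cite{BaiKom2007} as a known background result and used as a black box thereafter. So there is no ``paper proof'' to compare against; your plan is essentially a reconstruction of the classical argument.

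The outline is sound, and the two obstacles you flag are the right ones, but they are less severe than you suggest. For (b), the limit $\alpha^\star$ of a lexicographically increasing sequence of \emph{infinite} sequences, bounded above by $\alpha(q)$, is automatically infinite: if $\alpha^\star=w10^\infty$, then eventually $\alpha(q_n)$ begins with $w1$, and $\alpha(q_n)\preccurlyeq w10^\infty$ forces the tail of $\alpha(q_n)$ after $w1$ to be $0^\infty$, contradicting infiniteness. So this step closes cleanly once you observe it.

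The genuine soft spot is your surjectivity step. The sentence ``any strictly larger infinite $q_\star$-expansion of $1$ would, via the inequalities $\sigma^n(\alpha)\preccurlyeq\alpha$, contradict $\pi_{q_\star}(\alpha)=1$'' is not an argument as written: lexicographic order and $\pi_{q_\star}$-value are not monotonically related, so a strictly larger sequence with the same value is not an immediate contradiction. What you actually need here is the Parry-type lemma that a sequence $(c_i)$ with $\pi_q((c_i))=1$ is the quasi-greedy expansion of $1$ in base $q$ if and only if $\sigma^n((c_i))\preccurlyeq(c_i)$ for all $n\geq 0$ and $(c_i)$ is infinite. One direction of this is exactly your self-admissibility step; the other (that self-admissibility forces $\alpha(q_\star)=\alpha$) is most cleanly obtained by running the quasi-greedy algorithm and checking inductively that at each step the algorithm must choose the digit $a_n$, using $\sigma^{n-1}(\alpha)\preccurlyeq\alpha$ to bound the remainder. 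Once you insert that lemma explicitly, the rest of your plan goes through.
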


On the other hand, in a recent core of papers, Li \cite{Li2021}, Neuh\"auserer \cite{Neu2021} and Komornik et al.  \cite{KomLuZou2022} 
extended and studied the definition given in \eqref{eq:q-expansion} to \emph{double-base expansions}.

Given $q_0,q_1 > 1$ we consider  the  alphabet-base system $S=\set{(d_0, q_0), (d_1, q_1)}$. We call a sequence $(i_k)\in \Sigma_{2}$ a \emph{$(q_0,q_1)$-expansion} or a \emph{double-base expansion of $x$} if
\begin{equation}
	\label{eq:q0q1-expansion}
	x = \pi_{q_0,q_1}((i_k)):= \mathop{\sum}\limits_{k=1}^\infty \frac{d_{i_{k}}}{q_{i_1}\cdots q_{i_k}}.
\end{equation}
For a different but  related notion of double-base expansions, we refer the reader to \cite{ChaCisDaj2023}.

It is worth noticing that if $q_0 = q_1$, $d_0=0$ and $d_1=1$ then \eqref{eq:q0q1-expansion} reduces to \eqref{eq:q-expansion}.
It was shown in \cite{KomSteZou2022} that  the system  $S=\set{(d_0, q_0), (d_1, q_1)}$ is isomorphic to  $S=\set{(0, q_0), (1, q_1)}$, hence throughout this paper 
we restrict ourselves to the simpler system $S=\set{(0, q_0), (1, q_1)}$. 

\color{black} To obtain  the most elegant theory,   we consider only the simpler system for $(q_0, q_1)\in (1,\infty)^2$. 
As a consequence of \cite[Theorem 1]{KomLuZou2022}, we have $\pi_{q_0, q_1}\left(0^{\infty}\right)=0, \;\pi_{q_0, q_1}\left(1^{\infty}\right)=1/(q_1-1)$, and then
a number $x$ has a double-base expansion if and only if $x \in [0, 1/(q_1 - 1)]$. 	
Throughout this paper, we always assume that  \begin{equation}
	\label{eq:validcondition}
	(q_0, q_1) \in \set{(q_0,q_1) \in (1,\infty)^2 : q_0+q_1\geq q_0q_1}.
\end{equation}
\color{black}Since if 
$ (q_0, q_1) \in \{(q_0, q_1) \in (1, \infty)^2 : q_0 + q_1 < q_0 q_1 \}, $
then the set of points with an expansion in base $(q_0,q_1)$ is a fractal set, and if a point has an expansion in base $(q_0,q_1)$, then this sequence must be unique. Moreover, neither $q_1/(q_0(q_1-1))-1$ nor $q_0/q_1$ has any double-base expansion, since $q_1/(q_0(q_1-1))-1<0$ and $q_0/q_1>1/(q_1-1)$.
\color{black}


It is a natural question to ask if Theorem \ref{th:lexchar}, Theorem \ref{thm:propertiesUV} and Proposition \ref{p:lexi of 1} can be generalized to the setting of $S=\set{(0, q_0), (1, q_1)}$. It is more complicated, new 
research strategies  are needed.

\color{black}

We recall from Corollary 3 in \cite{KomLuZou2022} that whether a double-base expansion $(x_i)$ of $x\in[0, 1/(q_1-1)]$ is unique  is determined by two key points, which are
\begin{equation*}\label{def:critical}
	\ell_{q_0,q_1} := \frac{q_1}{q_0(q_1-1)} - 1 \qtq{ and } r_{q_0,q_1} := \frac{q_0}{q_1}.
\end{equation*}
In detail, define
$$T_{q_i,i}:\mathbb{R}\rightarrow\mathbb{R},\qtq{}x\rightarrow q_ix-i\qtq{} i\in\{0,1\},$$
a double-base expansion $(x_i)$ of $x\in[0, 1/(q_1-1)]$ is unique if and only if $\pi_{q_0, q_1}(\sigma^n((x_i)))\notin [1/q_1,1/(q_0(q_1-1))]\text{ for all }n\in \mathbb{N}_0$  as shown in Figure \ref{unique expansion}.
\begin{figure}[t]
	\label{unique expansion}
	\centering
	\begin{tikzpicture}[scale=5]
		\draw[gray](0,0)--(1,0)--(1,1)--(0,1)--cycle;
		\draw[dotted](.3334,0)--(.3334,1)(0.5,0)--(0.5,1);
		\draw[thick](0,0)--node[above left]{\scriptsize$T_{q_0,0}$}(0.5,1) (.3334,0)--node[below right]{\scriptsize$T_{q_1,1}$}(1,1);
		\draw(0,0)node[below]{\scriptsize$0$};
		\draw(1,0)node[below]{\scriptsize$\frac{1}{q_1-1}$};
		\draw(0,1)node[left]{\scriptsize$\frac{1}{q_1-1}$};
		\draw(0.3334,0)node[below]{\scriptsize$\frac{1}{q_1}$};
		\draw(0.5,0)node[below]{\scriptsize$\frac{1}{q_0(q_1-1)}$};
	\end{tikzpicture}
	\caption{The maps $T_{q_0,0}$ and $T_{q_1,1}$ generate double-base expansions for $x \in [0,1/(q_1-1)]$ when $q_0 = 2$ and $q_1 = 3/2$.}
\end{figure}

We denote by
\color{black}  $\lambda(q_0,q_1)$ and $\mu(q_0,q_1)$
the lexicographically smallest and the lexicographically smallest co-infinite double-base expansions of $\ell_{q_0, q_1}$, respectively, which are called the \emph{lazy} and \emph{quasi-lazy} double-base expansions of $\ell_{q_0, q_1}$ respectively. In a similar fashion, we denote by
${\beta}(q_0,q_1)$  and $\alpha(q_0,q_1)$ \color{black}
the lexicographically largest and the lexicographically largest infinite double-base expansions of $r_{q_0, q_1}$, which are called the  \emph{greedy} and  \emph{quasi-greedy} double-base expansions of $r_{q_0, q_1}$ respectively. \color{black}  These \color{black} four double-base expansions play an important role in the lexicographic characterization of  quasi-greedy, greedy, quasi-lazy and lazy expansions,  furthermore the unique double-base expansion of points in  $[0, 1/(q_1-1)]$; see \cite{KomLuZou2022} for details.

\color{black}
To state our results, we give some definitions of topology. We endow $\Sigma_2$ with the topology induced by the metric given by:
\begin{equation}\label{eq:distance}
	d((c_i),(e_i)) = \left\{
	\begin{array}{clrr}
		2^{-j} & \text{if} \quad  (c_i) \neq (e_i)&  \text{ where } j = \min\set{i \in \mathbb{N} : c_i \neq e_i},\\
		0 & \text{otherwise.}&\\
	\end{array}
	\right.
\end{equation}
It is a well known fact that $(\Sigma_{2},d)$ is a \color{black} compact \color{black} set. 
During our research, we will consider the set $\Sigma_{2} \times \Sigma_{2}$ with the topology induced by the metric given by:
\begin{equation}\label{eq:distanceSym}
	d_{\Sigma_{2} \times \Sigma_{2}}(((c_i),(e_i)),((c'_i),(e'_i))) = \mathop{\max}\set{d((c_i),(c'_i)), d((e_i),(e'_i))}.
\end{equation}


Consider $\mathbb{R}^2$ with the \emph{$\max$ topology}, i.e., the topology of $\mathbb{R}^2$ given by the metric:
\begin{equation}\label{eq:distanceR2}
	d_{\mathbb{R}^2}((q_0,q_1), (q'_0,q'_1)) = \mathop{\max}\set{|q_0-q'_0|,|q_1-q'_1|}.	
\end{equation}
It is a well known fact that the topology induced by $d_{\mathbb{R}^2}$ is equivalent to the usual euclidean topology in $\mathbb{R}^2$ as well as the product topology in $\mathbb{R}^2$.
\color{black}

We now proceed with the following definitions. 
\begin{definition}
	\label{def:q0q1sets}
	\leavevmode
	\begin{enumerate}[$(i)$]
		\item $\mathcal{U}_2:= \set{(q_0,q_1) \in (1,\infty)^2: \ell_{q_0,q_1} \text{ and } r_{q_0,q_1} \text{ have a unique  double-base   expansion}}$,
		\item $\overline{\mathcal{U}_2}$ is the topological closure of $\mathcal{U}_2$ with respect to the topology  \eqref{eq:distanceR2},
		\item $\mathcal{V}_2:= \{(q_0,q_1) \in (1, \infty)^2 : \sigma^m(\mu(q_0,q_1)) \preccurlyeq \alpha(q_0,q_1),\sigma^n(\alpha(q_0,q_1)) \succcurlyeq \mu(q_0,q_1)$ for all $m,n \in \mathbb{N}_0\} 
		$\footnote{In our sequel paper, we have proved that this definition is equivalent to $$\mathcal{V}_2:= \set{(q_0,q_1) \in (1,\infty)^2: \ell_{q_0,q_1} \text{ and } r_{q_0,q_1} \text{ have a unique doubly-infinite double-base   expansion}}.$$}.
	\end{enumerate}
\end{definition}

Following we define some corresponding set of sequences. To do this, let us introduce the following symbolic sets. Let
\begin{equation*}
	\label{eq:sig0sig1}
	\Sigma_{2}^0: = \set{(d_i) \in \Sigma_{2} : d_1 = 0}  \qtq{ and }  \Sigma_{2}^1 := \set{(d_i) \in \Sigma_{2} : d_1 = 1}.
\end{equation*}

\begin{definition}	\label{def:vb2}
	\leavevmode
	\begin{enumerate}[$(i)$]
		\item $\mathcal{U}'_2:=\{(\mu, \alpha)\in \Sigma_2^0 \times \Sigma_2^1:\mu \prec \sigma^m(\mu) \prec \alpha,\mu \prec \sigma^n(\alpha) \prec \alpha  \text{ for all } m,n \in \mathbb{N}\},$
		\item $\overline{\mathcal{U}'_2}$ is the topological closure of $\mathcal{U}'_2$ with respect to the topology \eqref{eq:distanceSym},
		\item $\mathcal{V}'_2:=\{(\mu, \alpha)\in \Sigma_2^0 \times \Sigma_2^1:\mu \preccurlyeq \sigma^m(\mu) \preccurlyeq \alpha,\mu\preccurlyeq \sigma^n(\alpha) \preccurlyeq \alpha  \text{ for all } m,n \in \mathbb{N}_0\}$.
	\end{enumerate}
\end{definition}

Now we are ready to state our first main result, which generalizes Proposition \ref{p:lexi of 1} in the double-base system. Define
\color{black}
\begin{equation}
	\label{eq:AB}
	\begin{split}
		\mathcal{B} &:= \set{(q_0,q_1) \in (1,\infty)^2 : q_0+q_1 > q_0q_1}, \\
		\mathcal{C} &:= \set{(q_0,q_1) \in (1,\infty)^2 : q_0 + q_1 = q_0 q_1}  , \\
		\mathcal{C}' &:= \left(\set{0^\infty, x_1 \dots x_m 1^\infty} \times \Sigma_2^1\right) \cup \left(\Sigma_2^0 \times \set{1^\infty, y_1 \dots y_n 0^\infty}\right) \\ 
		&\text{ with } x_i, y_j \in \set{0,1}, x_m=0, y_n=1, \text{ and } m, n \in \mathbb{N}, \\
		\mathcal{B}' &:= \set{(\mu, \alpha) \in \left(\Sigma_2^0 \times \Sigma_2^1\right) \setminus \mathcal{C}' : \mu \preccurlyeq \sigma^m(\mu) \text{ and } \sigma^n(\alpha) \preccurlyeq \alpha \text{ for all } m, n \in \mathbb{N}_0}.
	\end{split}
\end{equation}

Figure \ref{fig:figure2} represents the visualization of the set \( \mathcal{B} \) and its upper boundary \( \mathcal{C} \), providing a clear demonstration of the defined region and its constraints.
\color{black}
\begin{figure}[htbp]
	\centering
	\includegraphics[width=5cm]{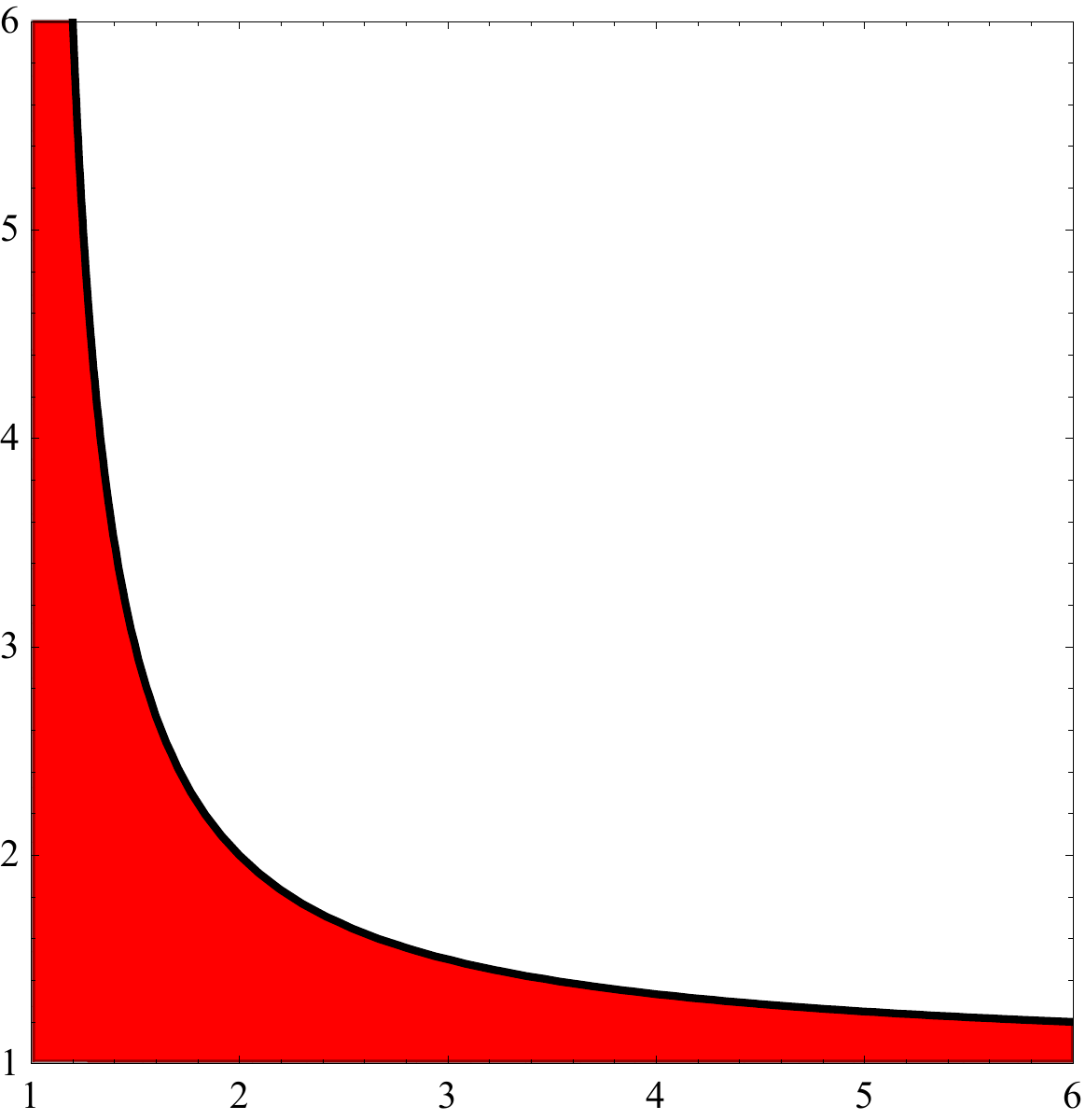} 
	\caption{The red region represents $\mathcal{B}$, and the black curve represents $\mathcal{C}$.}
	\label{fig:figure2}
\end{figure}

\begin{main}
	\label{t:main1}
	The map $\Phi: \mathcal{B} \to \mathcal{B}'$ defined by $\Phi((q_0, q_1)) = (\mu(q_0,q_1), \alpha(q_0,q_1))$ is bijective. Moreover, $\Phi^{-1}$ is continuous in $\mathcal{B}'$.
\end{main}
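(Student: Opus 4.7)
The plan is to emulate the single-base argument of Proposition \ref{p:lexi of 1}, adapted to the two coupled sequences $\mu$ and $\alpha$ that arise in the double-base setting. I would split the proof into four steps: well-definedness of $\Phi$, injectivity, surjectivity, and continuity of $\Phi^{-1}$.

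For \emph{well-definedness} $\Phi(\mathcal{B})\subseteq\mathcal{B}'$, the shift inequalities $\mu\preccurlyeq\sigma^m(\mu)$ and $\sigma^n(\alpha)\preccurlyeq \alpha$ (for all $m,n\in\mathbb{N}_0$) are the standard lexicographic characterisation of quasi-lazy and quasi-greedy expansions, already established in the double-base context in \cite{KomLuZou2022}. A direct calculation in $\mathcal{B}$ yields $0<\ell_{q_0,q_1}<1/q_1$ and $1/q_1<r_{q_0,q_1}<1/(q_1-1)$, which forces $\mu(q_0,q_1)\in\Sigma_2^0$, $\alpha(q_0,q_1)\in\Sigma_2^1$ and rules out both degenerate tails $0^\infty,1^\infty$ and the eventually-constant patterns comprising $\mathcal{C}'$ (these correspond precisely to limit behaviour as one approaches the boundary $\mathcal{C}$).

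For \emph{injectivity}, if $\Phi(q_0,q_1)=\Phi(q_0',q_1')=:(\mu,\alpha)$ then both bases must satisfy
\[
\pi_{q_0,q_1}(\alpha)=\frac{q_0}{q_1},\qquad \pi_{q_0,q_1}(\mu)=\frac{q_1}{q_0(q_1-1)}-1,
\]
and similarly for the primed base. Each sum $\pi_{q_0,q_1}(\cdot)$ is jointly continuous and strictly decreasing in each of $q_0,q_1$ (for the non-degenerate $\mu,\alpha$ produced in Step~1), while the two right-hand sides exhibit opposite monotonicities in $q_0$ and $q_1$; combining these, I would conclude that at most one $(q_0,q_1)\in\mathcal{B}$ can satisfy both equations simultaneously. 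For \emph{surjectivity}, given $(\mu,\alpha)\in\mathcal{B}'$ I would proceed via two nested intermediate value arguments: for each $q_1$ in a suitable interval, monotonicity produces a unique $q_0=q_0(q_1)$ solving $\pi_{q_0,q_1}(\alpha)=q_0/q_1$, giving a continuous implicit curve; along this curve I then solve $\pi_{q_0(q_1),q_1}(\mu)=\ell_{q_0(q_1),q_1}$ for $q_1$. The lexicographic conditions defining $\mathcal{B}'$, together with avoidance of $\mathcal{C}'$, supply exactly the endpoint sign changes needed to apply the IVT, and the symbolic characterisation of quasi-lazy and quasi-greedy expansions from \cite{KomLuZou2022} then certifies that the sequences $\mu$ and $\alpha$ we started with really equal $\mu(q_0,q_1)$ and $\alpha(q_0,q_1)$ at the base produced.

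\emph{Continuity of $\Phi^{-1}$} follows by a compactness-plus-limit argument: for $(\mu_n,\alpha_n)\to(\mu,\alpha)$ in $\mathcal{B}'$, any accumulation point $(q_0^*,q_1^*)$ of $\Phi^{-1}(\mu_n,\alpha_n)$ lies in $\overline{\mathcal{B}}$; joint continuity of $(q_0,q_1,\omega)\mapsto \pi_{q_0,q_1}(\omega)$, $\ell_{q_0,q_1}$, and $r_{q_0,q_1}$ allows the defining equations to pass to the limit, and the exclusion of $\mathcal{C}'$ at the limiting pair prevents $(q_0^*,q_1^*)$ from falling on $\mathcal{C}$, so $(q_0^*,q_1^*)\in\mathcal{B}$, after which injectivity forces it to equal $\Phi^{-1}(\mu,\alpha)$. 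The main obstacle is Step~3: in the single-base case one merely inverts a single monotone equation $\pi_q(\alpha)=1$, whereas here two coupled implicit equations must be solved jointly, and the careful endpoint bookkeeping—which is precisely what motivates the exclusion set $\mathcal{C}'$ in the definition of $\mathcal{B}'$—will require the most delicate work.
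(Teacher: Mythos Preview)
Your overall four-step architecture matches the paper's, and Steps 1 and 4 are close in spirit to what the paper does. The surjectivity scheme in Step 3---solve one equation implicitly for $q_0$ in terms of $q_1$ (or vice versa), then use the IVT along that curve for the second equation---is exactly the route the paper takes, via the implicit functions $g_\alpha(q_0)$ and $\tilde g_\mu(q_0)$ defined by $f_\alpha(q_0,g_\alpha(q_0))=0$ and $\tilde f_\mu(q_0,\tilde g_\mu(q_0))=0$.

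There is, however, a genuine gap at the uniqueness step (your Step 2, which you also rely on implicitly in Step 3). You write that ``each sum $\pi_{q_0,q_1}(\cdot)$ is \ldots strictly decreasing in each of $q_0,q_1$, while the two right-hand sides exhibit opposite monotonicities; combining these, I would conclude that at most one $(q_0,q_1)$ can satisfy both equations.'' This does not follow. Once you pass to implicit curves, \emph{both} $q_1=g_\alpha(q_0)$ and $q_1=\tilde g_\mu(q_0)$ turn out to be strictly decreasing functions of $q_0$ (this is Lemma~\ref{l:functions}(ii)). Two strictly decreasing curves can be tangent or cross several times; coordinatewise monotonicity of the original functions gives no transversality. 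What is actually needed is the Jacobian-type inequality
\[
D_1 f_{\alpha}\, D_2 \tilde{f}_{\mu} - D_2 f_{\alpha}\, D_1 \tilde{f}_{\mu} > 0 \qquad\text{whenever } f_\alpha=\tilde f_\mu=0,
\]
equivalently $g_\alpha'(x)<\tilde g_\mu'(x)$ at every crossing. The paper reduces this to a purely combinatorial double-series inequality (Proposition~\ref{p1}) and proves it by an iterated flattening argument on the exponent sequences (Lemmas~\ref{l1}--\ref{l2}); this is the technical heart of the proof and is nowhere hinted at in your proposal. Without it, neither injectivity nor the uniqueness half of surjectivity goes through.

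A smaller issue: your compactness argument for continuity of $\Phi^{-1}$ presumes that $\Phi^{-1}(\mu_n,\alpha_n)$ has accumulation points, but $\mathcal{B}$ is unbounded, so you would first need a uniform a~priori bound on $q_0(\mu_n,\alpha_n)$ (the paper instead uses a direct $\varepsilon$--$\delta$ estimate via the continuity of $\alpha\mapsto g_\alpha$ and $\mu\mapsto \tilde g_\mu$, Lemma~\ref{l:43KomSteZou2022}).
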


\begin{remark}\label{r:monotonic}
	\begin{enumerate}[\upshape(i)]
		\item We  have $q_0+q_1=q_0q_1\Leftrightarrow\mu(q_0,q_1)= 0^\infty\Leftrightarrow\alpha(q_0,q_1)= 1^\infty.$
		Hence, to say $\Phi$ is bijective, we assume that $(q_0, q_1)\in\mathcal{B} $. 
		\item $(\mu,\alpha)\in\mathcal{B}'$ implies \color{black} 
		$\mu$ is co-infinite and $\alpha$ is infinite. \color{black}  Furthermore,  we have $\Phi^{-1}((\mu, \alpha))=(q_0(\mu, \alpha), q_1(\mu, \alpha))$ and $(q_0(\mu, \alpha), q_1(\mu, \alpha)) \in\mathcal{B}. $
		\item\cite[Lemma 3.3]{KomSteZou2022} showed that the map $\Phi$ is monotonic under some condition, i.e.,
		for $q'_0 \ge q_0$, $q'_1 \ge q_1$, with $q'_0+q'_1 \ge q'_0 q'_1 > q_0 q_1$, we have
		\begin{equation*}
			\mu(q'_0,q'_1)\prec\mu(q_0,q_1)  \quad \text{and} \quad \alpha(q'_0,q'_1) \succ\alpha(q_0,q_1).                      
		\end{equation*}                                                   However, the inverse is not right. For example $\alpha(q_0,q_1)=(11100)^\infty$, $\mu(q_0,q_1)=(00101)^\infty$,
		$\alpha(q'_0,q'_1)=(1110)^\infty$, $\mu(q'_0,q'_1)=(00100101)^\infty$. But we  have $q'_0>q_0$ and $q'_1<q_1$.          \color{black} \item   $\Phi$ is discontinuous in $\mathcal{B}$. Let $q_0 = q_1 = (\sqrt{5} + 1)/2$. Then $\mu(q_0, q_1) = (01)^\infty, \quad \alpha(q_0, q_1) = (10)^\infty.$
		If $(q'_0, q'_1)$ is sufficiently close to $(q_0, q_1)$ and satisfies 
		$q'_0, q'_1 > (\sqrt{5}+1)/2$, then by (iii), we have $\mu(q'_0, q'_1)$ and $\alpha(q'_0, q'_1)$ begins with $00$ and $11$, respectively. This implies $\Phi$ is discontinuous in $\mathcal{B}$. \color{black}
	\end{enumerate}
\end{remark}

The following theorem provides information on the topology of the sets $\mathcal{U}_2'$, $\overline{\mathcal{U}'_2}$ and $\mathcal{V}'_2$.
\begin{main}\label{t:main2}
	The following statements hold:
	\begin{enumerate}[$(i)$]
		\item $\mathcal{U}'_2 \subsetneq \overline{\mathcal{U}'_2} \subsetneq \mathcal{V}'_2$.
		\item $(\mu, \alpha)\in\overline{\mathcal{U}'_2}$ if and only if $(\mu, \alpha)\in\mathcal{V}'_2$ and there are no	$m, n \in\mathbb{N}$ such that
		both
		$\sigma^{m}(\mu)=\alpha$ and $\sigma^{n}(\alpha)=\mu$
		hold.
		{	\item  $(\mu, \alpha)\in\mathcal{V}'_2\setminus\overline{\mathcal{U}'_2}$ if and only if \color{black}  $(\mu, \alpha)\in\mathcal{V}'_2$ and  there exist $m, n \in \mathbb{N}$ such that	both
			$\sigma^{m}(\mu)=\alpha$ and $\sigma^{n}(\alpha)=\mu$ hold.\color{black} }
		
		\item $\mathcal{V}'_2$ is compact with respect to the topology given in \eqref{eq:distanceSym}.
		\item  $\overline{\mathcal{U}'_2}$ is a Cantor set.
		\item  $\overline{\mathcal{U}'_2} \setminus \mathcal{U}'_2$ is dense in $\overline{\mathcal{U}'_2}$; $\mathcal{V}'_2 \setminus \overline{\mathcal{U}'_2}$ is countable and dense in $\mathcal{V}'_2$.
		\item $\overline{\mathcal{U}'_2} \setminus \mathcal{U}'_2$ is 	 uncountable.
	\end{enumerate}
\end{main}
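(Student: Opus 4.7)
The plan is to prove (vii) by an explicit construction, exhibiting an uncountable family of pairs in $\overline{\mathcal{U}'_2}\setminus \mathcal{U}'_2$ all sharing one coordinate equal to the fixed sequence $\alpha := (10)^\infty \in \Sigma_2^1$. The periodicity $\sigma^2(\alpha)=\alpha$ will do two things simultaneously: it breaks the strict inequality $\sigma^n(\alpha)\prec \alpha$ in the definition of $\mathcal{U}'_2$, so every pair of the form $(\mu,\alpha) \in \mathcal{V}'_2$ automatically lies outside $\mathcal{U}'_2$; and by part (ii) already proved, the only obstruction to $(\mu,\alpha) \in \mathcal{V}'_2$ lying in $\overline{\mathcal{U}'_2}$ is the simultaneous existence of $m,n \in \mathbb{N}$ with $\sigma^m(\mu) = \alpha$ and $\sigma^n(\alpha) = \mu$. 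Since $(01)^\infty$ is the unique shift of $(10)^\infty$ starting with $0$, ensuring $\mu \neq (01)^\infty$ already rules out the second equality and hence that obstruction.

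For each $s = (s_j)_{j\ge 1}\in\{1,2\}^{\mathbb{N}}$ I will set
\[
\mu_s := 0^{3}\,1\,0^{s_1}\,1\,0^{s_2}\,1\,0^{s_3}\,1\,\cdots \;\in\; \Sigma_2^0,
\]
so that $s\mapsto \mu_s$ is injective (the positions of the $1$'s recover $s$) and each $\mu_s$ begins with $0001$, hence $\mu_s \neq (01)^\infty$. I will then verify the four defining conditions of $\mathcal{V}'_2$ for $(\mu_s,\alpha)$. Three of them are immediate: $\mu_s \preccurlyeq \sigma^n(\alpha)$ reduces to $\mu_s \prec (01)^\infty$, which holds since $\mu_s$ begins with $000$; the condition $\sigma^n(\alpha) \preccurlyeq \alpha$ is trivial; and $\sigma^m(\mu_s)\preccurlyeq (10)^\infty$ follows from the short general observation that every sequence in $\{0,1\}^{\mathbb{N}}$ containing no factor $11$ is lexicographically at most $(10)^\infty$, which applies because $\mu_s$ (and hence each of its shifts) visibly has no factor $11$.

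The remaining condition is the quasi-lazy inequality $\mu_s \preccurlyeq \sigma^m(\mu_s)$ for every $m\geq 0$. I will split $m$ into cases according to where the shift lands---inside the initial $0$-block, at the first $1$, or inside a later $0$-block of length $s_j\in\{1,2\}$---and observe that in each case the first $1$ of $\sigma^m(\mu_s)$ appears at a position $\leq 3$, strictly before position $4$ where the first $1$ of $\mu_s$ sits; this yields $\mu_s\prec \sigma^m(\mu_s)$ for all $m\geq 1$. The only real (though mild) obstacle in writing the proof up is organising this case analysis cleanly, but it is driven entirely by the strict inequality $3 > s_j$ for every $j \geq 1$. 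Once these checks are in place, $(\mu_s,\alpha) \in \mathcal{V}'_2 \setminus \mathcal{U}'_2$, and by (ii) in fact $(\mu_s,\alpha) \in \overline{\mathcal{U}'_2}\setminus \mathcal{U}'_2$. Since $\{1,2\}^{\mathbb{N}}$ has cardinality $2^{\aleph_0}$, this proves that $\overline{\mathcal{U}'_2}\setminus \mathcal{U}'_2$ is uncountable.
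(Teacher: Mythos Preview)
Your proof is correct and takes a genuinely different route from the paper. The paper deduces (vii) indirectly: it first establishes in Section~\ref{sec:4} that $\dim_{\operatorname{H}}(\overline{\mathcal{U}_2}\setminus\mathcal{U}_2)\ge 1$ (via the family $\mathcal{E}'^{N}=\{(0^{2N-1}1)^\infty\}\times\mathcal{D}'^{N}$ pushed through $\Phi^{-1}$), infers uncountability of $\overline{\mathcal{U}_2}\setminus\mathcal{U}_2$, and then transports this back to the symbolic side using the bijection $\Phi$ of Theorem~\ref{t:main1} together with the identities of Lemmas~\ref{l:Ul2} and~\ref{l:UU}. Your argument bypasses all of the analytic machinery (the map $\Phi$, the implicit-function lemmas, the dimension estimate) and stays entirely in $\Sigma_2\times\Sigma_2$: fixing $\alpha=(10)^\infty$ and varying $\mu_s=0^{3}10^{s_1}10^{s_2}\cdots$ over $s\in\{1,2\}^{\mathbb{N}}$, the periodicity of $\alpha$ forces $(\mu_s,\alpha)\notin\mathcal{U}'_2$, the absence of $11$ in $\mu_s$ gives $\sigma^m(\mu_s)\preccurlyeq(10)^\infty$, the gap bound $s_j\le 2<3$ gives $\mu_s\prec\sigma^m(\mu_s)$, and part~(ii) places each pair in $\overline{\mathcal{U}'_2}$ since $\mu_s\ne(01)^\infty$. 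This is more elementary and self-contained for (vii) alone; the paper's detour, on the other hand, simultaneously delivers the stronger Hausdorff-dimension statement in Theorem~\ref{t:main4}(v). It is also worth noting that your construction is essentially the mirror image of the paper's family $\mathcal{E}'^{N}$ (there $\mu$ is the fixed periodic coordinate and $\alpha$ varies), so the underlying combinatorial idea is the same even though the logical path to (vii) is quite different.
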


The following Theorem \ref{t:main3} describes the connections between  the sets $\mathcal{U}_2$, $\overline{\mathcal{U}_2}$ and $\mathcal{V}_2$, and their corresponding  sets of sequences $\mathcal{U}'_2$, $\overline{\mathcal{U}'_2}$ and $\mathcal{V}'_2$.

\begin{main}\label{t:main3}
	\quad
	\begin{enumerate}[($i$)]
		\item  $ \mathcal{U}_2=\Phi^{-1}(\mathcal{U}'_2)\cup \mathcal{C}.$
		\item
		$\mathcal{V}_2=\Phi^{-1}(\mathcal{V}'_2\setminus\mathcal{C}')\cup \mathcal{C}.$
		\item
		$\overline{\mathcal{U}_2}=\Phi^{-1}(\overline{\mathcal{U}'_2}\setminus\mathcal{C}')\cup \mathcal{C}.$
	\end{enumerate}
\end{main}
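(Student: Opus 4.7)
The three parts share a common architecture: handle the boundary curve $\mathcal{C}$ uniformly, and then use the bijection $\Phi\colon\mathcal{B}\to\mathcal{B}'$ of Theorem \ref{t:main1} to transfer each statement to its symbolic counterpart. On $\mathcal{C}$ one has $\ell_{q_0,q_1}=0$ and $r_{q_0,q_1}=1/(q_1-1)$, whose only double-base expansions are $0^\infty$ and $1^\infty$, so $\mathcal{C}\subseteq\mathcal{U}_2\subseteq\overline{\mathcal{U}_2}\subseteq\mathcal{V}_2$; this accounts for the $\cup\,\mathcal{C}$ term on each right-hand side, and the remaining work takes place inside $\mathcal{B}$, where $\Phi$ is bijective onto $\mathcal{B}'$ with continuous inverse and $\mathcal{B}'\cap\mathcal{C}'=\emptyset$.

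For part (i), I would apply the lexicographic characterization of unique double-base expansions coming from \cite{KomLuZou2022}: a value $x\in[0,1/(q_1-1)]$ has a unique expansion in base $(q_0,q_1)$ exactly when its orbit under $T_{q_0,0},T_{q_1,1}$ avoids the overlap interval $[1/q_1,1/(q_0(q_1-1))]$; this in turn translates---using that the quasi-greedy expansion of $1/q_1$ is $0\alpha(q_0,q_1)$ and the quasi-lazy expansion of $1/(q_0(q_1-1))$ is $1\mu(q_0,q_1)$---into lexicographic inequalities on the shifts of $\mu(q_0,q_1)$ and $\alpha(q_0,q_1)$. Applied simultaneously to $\ell_{q_0,q_1}$ and $r_{q_0,q_1}$, these reduce precisely to $\mu\prec\sigma^m(\mu)\prec\alpha$ and $\mu\prec\sigma^n(\alpha)\prec\alpha$ for all $m,n\ge 1$, which is the definition of $\mathcal{U}'_2$. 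Part (ii) then follows with almost no extra work: the inequalities defining $\mathcal{V}_2$ are already $\sigma^m(\mu)\preccurlyeq\alpha$ and $\sigma^n(\alpha)\succcurlyeq\mu$, while the extra $\mathcal{V}'_2$-relations $\mu\preccurlyeq\sigma^m(\mu)$ and $\sigma^n(\alpha)\preccurlyeq\alpha$ are automatic self-comparison properties of quasi-lazy and quasi-greedy expansions; removing $\mathcal{C}'$ simply records that $\Phi(\mathcal{B})=\mathcal{B}'$ is disjoint from $\mathcal{C}'$.

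The main obstacle is part (iii), because $\Phi$ itself is \emph{not} continuous (Remark \ref{r:monotonic}(iv)). The inclusion $\Phi^{-1}(\overline{\mathcal{U}'_2}\setminus\mathcal{C}')\subseteq\overline{\mathcal{U}_2}$ is easy: any $(\mu,\alpha)\in\overline{\mathcal{U}'_2}\setminus\mathcal{C}'\subseteq\mathcal{B}'$ is a limit of some $(\mu^{(k)},\alpha^{(k)})\in\mathcal{U}'_2$, and continuity of $\Phi^{-1}$ combined with part (i) transfers this to an approximating sequence in $\mathcal{U}_2$. For the reverse inclusion, given $(q_0,q_1)\in\overline{\mathcal{U}_2}\setminus\mathcal{C}\subseteq\mathcal{B}$ and $(q_0^{(k)},q_1^{(k)})\in\mathcal{U}_2$ approaching it (eventually in $\mathcal{B}$, since $\mathcal{B}$ is open), I would extract a subsequential limit $(\mu^*,\alpha^*)\in\overline{\mathcal{U}'_2}$ of $\Phi((q_0^{(k)},q_1^{(k)}))$ in the compact space $\Sigma_2\times\Sigma_2$ and then establish the auxiliary claim $(\mu^*,\alpha^*)\notin\mathcal{C}'$. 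This is the crux: a limit in $\mathcal{C}'$ would, by joint continuity of $\pi_{q_0,q_1}$ in its arguments, force $\ell_{q_0^{(k)},q_1^{(k)}}\to 0$ or $r_{q_0^{(k)},q_1^{(k)}}\to 1/(q_1-1)$, hence $q_0+q_1=q_0q_1$, contradicting $(q_0,q_1)\in\mathcal{B}$. Once $(\mu^*,\alpha^*)\in\mathcal{B}'$ is secured, continuity of $\Phi^{-1}$ forces $\Phi^{-1}(\mu^*,\alpha^*)=(q_0,q_1)$, so $(\mu^*,\alpha^*)=\Phi((q_0,q_1))\in\overline{\mathcal{U}'_2}\setminus\mathcal{C}'$, as required.
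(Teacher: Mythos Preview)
Your treatment of parts (i) and (ii) matches the paper's (Lemmas \ref{l:Ul2} and \ref{l:V_closed}): reduce to the lexicographic criterion \eqref{e:unique-expanison} and use that quasi-lazy/quasi-greedy expansions automatically satisfy the self-comparison inequalities. The paper does spell out a short case analysis on the digit $\mu_m$ to upgrade the conditional inequalities in \eqref{e:unique-expanison} to the full $\mathcal{U}'_2$ inequalities, which you elide, but this is routine.

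For part (iii) your route genuinely diverges from the paper's. The paper obtains the inclusion $\overline{\mathcal{U}_2}\setminus\mathcal{C}\subseteq\Phi^{-1}(\overline{\mathcal{U}'_2}\setminus\mathcal{C}')$ indirectly: it first proves $\mathcal{V}_2$ closed (Lemma \ref{l:V_closed}) via the technical semicontinuity Lemma \ref{l:LRContinous}, then shows that points with $\Phi((q_0,q_1))\in\mathcal{V}'_2\setminus\overline{\mathcal{U}'_2}$ are isolated in $\mathcal{V}_2$ (Lemma \ref{l:Vdis2}, again relying on Lemma \ref{l:LRContinous}), and finally invokes the symbolic characterization of $\overline{\mathcal{U}'_2}$ from Theorem \ref{t:main2}(ii)--(iii). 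Your compactness-plus-continuity-of-$\Phi^{-1}$ argument is more direct and bypasses Lemma \ref{l:LRContinous} entirely, which is a real simplification.

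There is, however, a gap in your crux step. You assert that $(\mu^*,\alpha^*)\in\mathcal{C}'$ forces $\ell_{q_0^{(k)},q_1^{(k)}}\to 0$ or $r_{q_0^{(k)},q_1^{(k)}}\to 1/(q_1-1)$ ``by joint continuity of $\pi_{q_0,q_1}$'', but $\mathcal{C}'$ also contains pairs with $\mu^*$ co-finite (say $\mu^*=x_1\cdots x_m1^\infty$) or $\alpha^*$ finite, for which $\pi_{q_0,q_1}(\mu^*)\ne 0$ and $\pi_{q_0,q_1}(\alpha^*)\ne 1/(q_1-1)$. The missing observation is that $(\mu^*,\alpha^*)\in\overline{\mathcal{U}'_2}\subseteq\mathcal{V}'_2$, so $\sigma^m(\mu^*)\preccurlyeq\alpha^*$ and $\mu^*\preccurlyeq\sigma^n(\alpha^*)$ hold for all $m,n$; hence if $\mu^*$ is co-finite then $\sigma^m(\mu^*)=1^\infty$ forces $\alpha^*=1^\infty$, and symmetrically $\alpha^*$ finite forces $\mu^*=0^\infty$. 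Thus $\mathcal{C}'\cap\mathcal{V}'_2$ always entails $\mu^*=0^\infty$ or $\alpha^*=1^\infty$, and \emph{then} your continuity argument goes through. With this one-line addition your proof of (iii) is complete and arguably cleaner than the paper's.
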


\begin{main}\label{t:main4}\color{black}
	The following statements hold:
	\begin{enumerate}[($i$)]
		\item $\mathcal{U}_2$, $\overline{\mathcal{U}_2}$  and $\mathcal{V}_2$ are unbounded \color{black} nowhere dense \color{black} sets and of the Baire's first category.
		\item $\mathcal{U}_2 \subsetneq \overline{\mathcal{U}_2} \subsetneq \mathcal{V}_2$.
		\item The set $\mathcal{V}_2$ is closed, and the set 
		$\mathcal{U}_2$ is  \color{black} a countable union of Cantor sets.\color{black}
		\item The set:
		\begin{enumerate}
			\item $\overline{\mathcal{U}_2} \setminus \mathcal{U}_2$ is  dense  in $\overline{\mathcal{U}_2}$.
			\item  $\mathcal{V}_2 \setminus \overline{\mathcal{U}_2}$ is a  countable discrete set, dense in $\mathcal{V}_2\setminus\mathcal{C}$.
		\end{enumerate}
		\item $\dim_{\operatorname{H}}\mathcal{U}_2 = \dim_{\operatorname{H}}\overline{\mathcal{U}_2}= \dim_{\operatorname{H}}\mathcal{V}_2 = 2$ and $\dim_{\operatorname{H}}\overline{\mathcal{U}_2} \setminus \mathcal{U}_2 \geq 1 $.
	\end{enumerate}
\end{main}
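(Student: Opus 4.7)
\emph{Strategy.} The plan is to deduce Theorem \ref{t:main4} from its symbolic analogue Theorem \ref{t:main2} by transporting properties through the parametrization $\Phi^{-1}$ of Theorem \ref{t:main1} and the base/sequence correspondences of Theorem \ref{t:main3}. The curve $\mathcal{C}=\{q_0+q_1=q_0q_1\}$, an unbounded one-dimensional hyperbola contained in all three sets $\mathcal{U}_2,\overline{\mathcal{U}_2},\mathcal{V}_2$, immediately gives their unboundedness in (i) and must be treated separately from the interior pieces $\Phi^{-1}(\mathcal{U}'_2),\Phi^{-1}(\overline{\mathcal{U}'_2}\setminus\mathcal{C}'),\Phi^{-1}(\mathcal{V}'_2\setminus\mathcal{C}')$.

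\emph{Proof of \textup{(iii)} and the structural parts of \textup{(ii)}, \textup{(iv)}.} Closedness of $\mathcal{V}_2$ follows from compactness of $\mathcal{V}'_2$ (Theorem \ref{t:main2}(iv)) and continuity of $\Phi^{-1}$ on $\mathcal{B}'$: given a convergent sequence in $\mathcal{V}_2$, split into subsequences in $\mathcal{C}$ (limit in $\mathcal{C}\subset\mathcal{V}_2$) and in $\mathcal{V}_2\setminus\mathcal{C}=\Phi^{-1}(\mathcal{V}'_2\setminus\mathcal{C}')$; in the latter, the $\Phi$-images have a subsequential limit in $\mathcal{V}'_2$, and depending on whether it lies in $\mathcal{V}'_2\setminus\mathcal{C}'$ (limit recovered by continuity of $\Phi^{-1}$) or in $\mathcal{C}'$ (original sequence then approaches $\mathcal{C}$), the limit lies in $\mathcal{V}_2$. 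For the ``countable union of Cantor sets'' description, stratify
\[
\mathcal{U}'_2 = \bigcup_{k \geq 1} A_k,
\]
where $A_k$ requires each strict inequality of Definition \ref{def:vb2}(i) to be witnessed within the first $k$ positions (so $A_k$ is compact). Each $A_k$ is perfect (tails beyond position $k$ may be freely perturbed without violating the constraints) and totally disconnected as a subset of $\Sigma_2\times\Sigma_2$, hence Cantor; $\Phi^{-1}|_{A_k}$ is a continuous bijection from compact Hausdorff to Hausdorff, hence a homeomorphism onto a Cantor set in $\mathbb{R}^2$. Parts (iv)(a)--(iv)(b) then follow by pushing Theorem \ref{t:main2}(vi) through $\Phi^{-1}$, with countability in (iv)(b) coming from Theorem \ref{t:main2}(iii) (such pairs are eventually periodic, determined by finite data $\sigma^m(\mu)=\alpha$ and $\sigma^n(\alpha)=\mu$) and discreteness from Remark \ref{r:monotonic}(iii). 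Strict inclusion (ii) is then automatic since $\overline{\mathcal{U}_2}\setminus\mathcal{U}_2$ and $\mathcal{V}_2\setminus\overline{\mathcal{U}_2}$ are both nonempty.

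\emph{Nowhere density in \textup{(i)} and dimension \textup{(v)}.} For nowhere density, an analogous stratification $\overline{\mathcal{U}'_2}\setminus\mathcal{C}'=\bigcup_k B_k$ (with $B_k$ closed) yields $\overline{\mathcal{U}_2}=\bigcup_k\Phi^{-1}(B_k)\cup\mathcal{C}$, where each $\Phi^{-1}(B_k)$ is compact and totally disconnected (hence has empty interior in $\mathbb{R}^2$), and $\mathcal{C}$ has empty interior as a smooth curve; so $\overline{\mathcal{U}_2}$ is closed and first-category in the Baire space $\mathbb{R}^2$, hence has empty interior, i.e., is nowhere dense. The same reasoning applied to $\mathcal{V}_2$ (with the extra countable-discrete piece from (iv)(b)) gives nowhere density of $\mathcal{V}_2$, and first category is automatic. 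For $\dim_{\operatorname{H}}\mathcal{U}_2\geq 2$ in (v), I adapt the free-bit/separator construction of \cite{DeVKom2009,KomKonLi2015} to two variables: for each large $N$, build $S_N\subset\mathcal{U}'_2$ consisting of pairs of schematic form
\[
\mu = 0 b_1 1^N 0 b_2 1^N 0 \cdots, \qquad \alpha = 1 b'_1 0^N 1 b'_2 0^N 1 \cdots
\]
with free bits $b_i,b'_i\in\{0,1\}$ and separator blocks $1^N,0^N$ engineered to force all four families of strict lex constraints of Definition \ref{def:vb2}(i); this yields $\dim_{\operatorname{H}}S_N\to 2$ in $\Sigma_2\times\Sigma_2$. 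Since $S_N\subset A_{k(N)}$ and $\Phi^{-1}|_{A_k}$ is bi-Lipschitz (via the implicit function theorem applied to the analytic defining equations $\pi_{q_0,q_1}(\mu)=\ell_{q_0,q_1}$ and $\pi_{q_0,q_1}(\alpha)=r_{q_0,q_1}$, whose Jacobian is non-degenerate on $\mathcal{B}$), the dimension lower bound transfers to $\Phi^{-1}(S_N)\subset\mathcal{U}_2$. For $\dim_{\operatorname{H}}\overline{\mathcal{U}_2}\setminus\mathcal{U}_2\geq 1$, fix a suitable boundary $\alpha_*$ and vary $\mu$ through a one-variable free-bit Cantor set of dimension $\geq 1$ inside $\overline{\mathcal{U}'_2}\setminus\mathcal{U}'_2$, then push through $\Phi^{-1}$.

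\emph{Main obstacle.} The decisive step is the dimension lower bound $\dim_{\operatorname{H}}\mathcal{U}_2\geq 2$. Two subtleties arise: first, quantitative bi-Lipschitz control of $\Phi^{-1}$ on each stratum $A_k$ requires explicit Jacobian estimates from the defining equations, with constants that may degenerate near $\mathcal{C}$, forcing the families $S_N$ to be positioned uniformly away from $\mathcal{C}$; second, the combinatorial design of $S_N$ must simultaneously satisfy four families of lex inequalities (two self-shift constraints $\mu\prec\sigma^m(\mu)$, $\sigma^n(\alpha)\prec\alpha$, plus the two coupling ones $\sigma^m(\mu)\prec\alpha$ and $\mu\prec\sigma^n(\alpha)$), strictly more delicate than the one-variable analogue and forming the combinatorial heart of the argument.
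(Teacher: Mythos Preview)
Your overall strategy --- transport Theorem \ref{t:main2} through $\Phi^{-1}$ and Theorem \ref{t:main3} --- matches the paper's, but the decisive step (v) contains a genuine gap. The claim that $\Phi^{-1}|_{A_k}$ is bi-Lipschitz via the implicit function theorem is not valid: the domain $\Sigma_2\times\Sigma_2$ is a Cantor set, not a manifold, and the implicit function theorem yields smooth dependence of $(q_0,q_1)$ on the coefficients of the defining power series, not on the symbolic metric $2^{-m}$. In fact $\Phi^{-1}$ is \emph{not} bi-Lipschitz even on the paper's own test sets: the paper's quantitative estimate (Lemma \ref{l:Dist}) gives only $|q-q'|\gtrsim (2+\varepsilon(N))^{-m}$ when the sequences first differ at index $m$, which against the symbolic distance $2^{-m}$ is H\"older of exponent $\log 2/\log(2+\varepsilon(N))<1$. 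This is precisely why the paper obtains $\dim_H\mathcal{U}_2^N\ge\tau(N)\log_{2+\varepsilon(N)}2$ with the bound tending to $2$ only as $N\to\infty$, via a direct covering argument (Theorem \ref{t:hausdorff dimension}) rather than by transporting symbolic dimension. The core of (v) is establishing those lower distance bounds (Lemmas \ref{l:47}--\ref{l:Dist}), which exploit the specific block structure $1^{2N-1}0\cdots$ of ${\mathcal{U}'_2}^N$ and the fact that all bases cluster near $2$ (Lemma \ref{l:45}); your free-bit/separator sets $S_N$ are in the right spirit, but you must replicate this analysis, not bypass it.

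A second gap, stemming from the discontinuity of $\Phi$ (Remark \ref{r:monotonic}(iv)), runs through several of your transference steps. Discreteness of $\mathcal{V}_2\setminus\overline{\mathcal{U}_2}$ does not follow from Remark \ref{r:monotonic}(iii), which only treats the monotone direction $q'_0\ge q_0$, $q'_1\ge q_1$; nor does symbolic isolation (Lemma \ref{l: V'isolated}) transfer automatically, since nearby bases need not have nearby quasi-greedy/quasi-lazy expansions. The paper proves discreteness directly in base space (Lemma \ref{l:Vdis2}) using a stability result for greedy and lazy expansions under small perturbations of $(q_0,q_1)$ (Lemma \ref{l:LRContinous}). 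The same stability lemma drives the paper's proof that $\mathcal{V}_2$ is closed (Lemma \ref{l:V_closed}, a complement-is-open argument), which avoids the unjustified ``original sequence then approaches $\mathcal{C}$'' step in your sequential proof. Your Baire-category route to nowhere density is a legitimate alternative to the paper's use of Lemma \ref{l:UlVldense}, and your stratification idea for the Cantor-set decomposition differs from the paper's rectangular partition of base space (Lemma \ref{l:partition}, Proposition \ref{p:cantor}) but could be made to work once the sets $A_k$ are defined precisely.
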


Our research is arranged as follows. In Section \ref{sec:preliminaries} we recall some relevant
definitions and results \color{black} on \color{black} expansions in double-base. \color{black}
Theorem \ref{t:main1} is proved in Section \ref{sec:3}. Section \ref{sec:4} is devoted to calculating the Hausdorff dimension of sets $\overline{\mathcal{U}_2} \setminus \mathcal{U}_2 $ and $\mathcal{U}_2$ in Theorem \ref{t:main4}. Theorem \ref{t:main2} is established in Sections \ref{sec:5} and \ref{sec:6}. Besides, Section \ref{sec:6} contains  the proof of Theorem \ref{t:main3} and  \color{black} Theorem \ref{t:main4} (i)-(iv) . \color{black} 

\vspace{1em}

\section{Preliminaries}
\label{sec:preliminaries}
We  recall from  \cite[Proposition 13]{KomLuZou2022} the relation between the lazy and the quasi-lazy (the greedy and the quasi-greedy ) expansions as follows.
\begin{equation}\label{eq:qlazy}
	\mu({q_0, q_1})  = \left\{
	\begin{array}{clrr}
		\lambda({q_0, q_1}) & \text{ if }\lambda({q_0, q_1})   \text{ is co-infinite,}\\
		(\lambda_1 \ldots \lambda_m^+)^\infty & \text{ otherwise, and  } m = \mathop{\max}\set{i \in \mathbb{N}: \lambda_i =0}\\
	\end{array}
	\right.
\end{equation}
and
\begin{equation}\label{eq:qgreedy2}
	\alpha({q_0, q_1})   = \left\{
	\begin{array}{clrr}
		\beta({q_0, q_1}) & \text{ if }\beta({q_0, q_1})   \text{ is infinite,}\\
		(\beta_1 \ldots \beta_n^-)^\infty & \text{otherwise, and } n = \mathop{\max}\set{i \in \mathbb{N}: \beta_i=1}.\\
	\end{array}
	\right.
\end{equation}

The \emph {greedy (or quasi-greedy) algorithm} provides a (or an infinite) double-base  expansion of $x\in [0, 1/(q_1-1)]$ for each pair $(q_0, q_1)\in\mathcal{B}$, which is defined recursively as follows: if for some positive integer $n$ the digits $\beta_1,\cdots, \beta_{n-1}$ (or $\alpha_1,\cdots,\alpha_{n-1}$) are already defined (no condition if $n=1$), then $\beta_{n}$ (or $\alpha_n$) is the largest digit in $\{0,1\}$ such that $$ \mathop{\sum}\limits_{k=1}^n \frac{\beta_{k}}{q_{\beta_{1}}\cdots q_{\beta_{k}}}\le  x \left(\text { or }  \mathop{\sum}\limits_{k=1}^n \frac{\alpha_{k}}{q_{\alpha_{1}}\cdots q_{\alpha_{k}}}< x\right)$$ holds. The resulting \emph {greedy (or quasi-greedy) expansion} is clearly the lexicographically largest (or largest infinite) double-base expansion.

Similarly, 
the \emph {lazy (or quasi-lazy) algorithm} provides a (or a co-infinite) double-base  expansion of $x\in [0, 1/(q_1-1)]$ for each pair $(q_0, q_1)\in\mathcal{B}$, which is defined recursively as follows: if for some positive integer $m$ the digits $\lambda_1,\cdots, \lambda_{m-1}$ (or $\mu_1,\cdots,\mu_{m-1}$) are already defined (no condition if $m=1$), then $\lambda_{m}$ (or $\mu_m$) is the smallest digit in $\{0,1\}$ such that $$ \mathop{\sum}\limits_{k=1}^m \frac{\lambda_{k}}{q_{\lambda_{1}}\cdots q_{\lambda_{k}}}+\frac{1}{q_{\lambda_{1}}\cdots q_{\lambda_{m}}(q_1-1)}\ge x \left(\text { or } \mathop{\sum}\limits_{k=1}^m\frac{{\mu}_{k}}{q_{{\mu}_{1}}\cdots q_{{\mu}_{k}}}+\frac{1}{q_{{\mu}_{1}}\cdots q_{{\mu}_{m}}(q_1-1)}> x\right)$$ holds. The resulting \emph {lazy (or quasi-lazy) expansion} is clearly the lexicographically smallest (or smallest co-infinite) double-base expansion.
\color{black}

A double-base expansion is unique if and only it is greedy and lazy at the same time. We recall from  \cite[Corollary 3]{KomLuZou2022} that
a double-base expansion $(x_i)$ of point $x$ belonging to $[0, 1/(q_1-1)]$
is unique if and only if
\begin{equation}\label{e:unique-expanison}
	\left\{
	\begin{array}{clrr}
		\mu(q_0, q_1)\prec\sigma^m((x_i)) & \text{ whenever }x_m=1, \\
		\sigma^m((x_i))\prec\alpha (q_0, q_1) & \text{ whenever } x_m=0.\\
	\end{array}
	\right.
\end{equation}



\vspace{1em}

\section{Proof of Theorem \ref{t:main1}}
\label{sec:3}
This section is devoted to prove Theorem \ref{t:main1} that we shall use very frequently in the sequel.
First  we will recall a
number of technical results from \cite {KomSteZou2022}  that will assist in our proof of Theorem \ref{t:main1}.
Consider the two equations   $\pi_{q_0, q_1}(\mu)=\ell_{q_0, q_1}$ and $\pi_{q_0, q_1}(\alpha)=r_{q_0, q_1}$.
Define
\begin{equation*}
	\begin{split}
		&\tilde{\pi}_{q_0,q_1}(x_1x_2\cdots) := \sum_{k=1}^\infty \frac{1-x_k}{q_{x_1}q_{x_2}\cdots q_{x_k}}.
	\end{split}
\end{equation*}
Then equation $\pi_{q_0, q_1}(\mu)=\ell_{q_0, q_1}$ can be rewritten as $\tilde{\pi}_{q_0, q_1}(\mu)=q_1/q_0$.
\color{black} Define \color{black}
\begin{equation*}
	f_{\alpha}(q_0, q_1) :=\pi_{q_0,q_1}(\alpha)-q_0/q_1, \quad \tilde{f}_{\mu}(q_0,q_1) :=\tilde{\pi}_{q_0,q_1}(\mu)-q_1/q_0.
\end{equation*}
In the following lemma we recall some properties of the functions $f_{\alpha}(q_0, q_1)$ and $\tilde{f}_{\mu}(q_0,q_1)$ from \cite{KomSteZou2022}.
\begin{lemma} \cite [Lemmas 4.1 and 4.2] {KomSteZou2022}\label{l:functions} \color{black} Let $(\mu,\alpha)\in \mathcal{B}' $.
	\begin{enumerate}[\upshape (i)]
		\item\color{black}For every $q_0 > 1$, the equations $f_{\alpha}(q_0, q_1) = 0$ and $\tilde{f}_{\mu}(q_0, q_1) = 0$ have unique solutions $q_1$, denoted by $g_{\alpha}(q_0)$ and $\tilde{g}_{\mu}(q_0)$, respectively. \color{black} And there is a unique $q_0 > 1$ depending on $\alpha$ such that $f_{\alpha}(q_0,1) = 0$; we denote this $q_0$ by~$q_{\alpha}$.
		\item ~$g_{\alpha}(q_0)$ is continuous in $q_0>1$ and strictly decreasing in $(1, q_{\alpha})$; $\tilde{g}_{\mu}(q_0)$  is continuous  and strictly decreasing in $q_0>1$.
		\item $
		\lim_{q_0\to 1} (g_{\alpha}(q_0)-\tilde{g}_{\mu}(q_0)) = \infty \quad \text{and} \quad
		g_{\alpha}(q_0) - \tilde{g}_{\mu}(q_0) < 0 \quad \text{for all}\ q_0 \ge q_{\alpha}.
		$
		\item If \color{black} $g_{\alpha}(q_0)-\tilde{g}_{\mu}(q_0)=0$ have a unique solution \color{black}, denoted by $q_0(\mu, \alpha)$. Then  $$1<q_0(\mu, \alpha)<q_\alpha \text{ and } q_1(\mu, \alpha)=g_{\alpha}(q_0(\mu, \alpha))=\tilde{g}_{\mu}(q_0(\mu, \alpha)).$$
		\item If $g_\alpha(x)-\tilde{g}_\mu(x)>0$ then  $x<q_0(\mu, \alpha)$. If $g_\alpha(x)-\tilde{g}_\mu(x)<0$ then  $x>q_0(\mu, \alpha)$.
	\end{enumerate}
\end{lemma}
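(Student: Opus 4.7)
The plan is to treat parts (i)--(v) sequentially, analyzing $f_\alpha(q_0,q_1)$ and $\tilde f_\mu(q_0,q_1)$ as continuous functions on $(1,\infty)^2$ and exploiting term-by-term monotonicity of the underlying power series. The crucial structural input is that since $(\mu,\alpha) \in \mathcal{B}'$ and $\mathcal{C}'$ is excluded, both $\alpha$ and $\mu$ are doubly infinite. In particular, every tail $\sigma^k\alpha$ contains infinitely many $1$'s and $0$'s, so $\pi_{q_0,q_1}(\sigma^k\alpha)$ is strictly decreasing in each of $q_0$ and $q_1$; the same holds for $\tilde\pi_{q_0,q_1}(\sigma^k\mu)$. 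This single fact drives essentially every monotonicity claim in the lemma.

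For part (i), I would fix $q_0 > 1$ and use $\alpha_1 = 1$ to rewrite $f_\alpha(q_0,q_1)=0$ as $\pi_{q_0,q_1}(\sigma\alpha) = q_0 - 1$. The left-hand side is continuous and strictly decreasing in $q_1$, diverges as $q_1 \to 1^+$ (because $\sigma\alpha$ still has infinitely many $1$'s), and tends to $0$ as $q_1 \to \infty$; the right-hand side is a positive constant, so the intermediate value theorem supplies a unique $g_\alpha(q_0)$. An identical argument with $\mu_1 = 0$ produces a unique $\tilde g_\mu(q_0)$ via $\tilde\pi_{q_0,q_1}(\sigma\mu) = q_1 - 1$, where the left-hand side is decreasing and the right-hand side increasing in $q_1$. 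For $q_\alpha$, I would analyze $q_0 \mapsto f_\alpha(q_0,1) = \pi_{q_0,1}(\alpha) - q_0$ and argue a single sign change from analogous monotonicity and limit analysis.

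Continuity in part (ii) follows from the implicit function theorem, since the non-vanishing of $\partial_{q_1} f_\alpha$ along the graph is exactly the strict monotonicity just used. For strict monotonicity of $g_\alpha$ on $(1,q_\alpha)$, the cleanest route is a direct comparison: assuming $q_0 < q_0'$ and $g_\alpha(q_0) \leq g_\alpha(q_0')$, termwise monotonicity forces $\pi_{q_0',g_\alpha(q_0')}(\alpha) < \pi_{q_0,g_\alpha(q_0)}(\alpha) = q_0/g_\alpha(q_0) \leq q_0'/g_\alpha(q_0')$, contradicting the defining equation at $q_0'$. The corresponding comparison for $\tilde g_\mu$ on $(1,\infty)$ is identical. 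For part (iii), the sign claim $g_\alpha(q_0) < \tilde g_\mu(q_0)$ for $q_0 \geq q_\alpha$ follows because $g_\alpha(q_\alpha) = 1$ by definition of $q_\alpha$, while $\tilde g_\mu(q_0) > 1$ always (otherwise $\tilde\pi_{q_0,q_1}(\mu) = q_1/q_0 \leq 1/q_0$ would be too small to accommodate $\mu_1=0$). The divergence as $q_0 \to 1^+$ requires a rate comparison: $\tilde g_\mu(q_0) \leq q_0/(q_0-1)$ follows from $\tilde\pi_{q_0,q_1}(\mu) \leq \sum_{k\geq 1} 1/q_0^k = 1/(q_0-1)$, while a matching lower bound on $g_\alpha(q_0)$ of strictly larger order must be extracted from $\pi_{q_0,q_1}(\alpha) = q_0/q_1$.

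Parts (iv) and (v) are then a direct consequence. Setting $h(q_0) := g_\alpha(q_0) - \tilde g_\mu(q_0)$, continuity of $h$ together with the opposing sign behaviors guaranteed by (iii) force any zero to lie in $(1,q_\alpha)$; under the uniqueness hypothesis of (iv), this zero is $q_0(\mu,\alpha)$ and $q_1(\mu,\alpha) := g_\alpha(q_0(\mu,\alpha)) = \tilde g_\mu(q_0(\mu,\alpha))$ is well-defined. Part (v) follows since a continuous function with a unique zero has constant sign on each connected component of its complement, and the signs are pinned down by the boundary behaviors established in (iii). The main obstacle I anticipate is the rate comparison in (iii)(a): showing that both $g_\alpha$ and $\tilde g_\mu$ individually diverge as $q_0 \to 1^+$ is routine, but showing that their \emph{difference} diverges requires extracting the correct leading-order growth of $g_\alpha$ from the asymmetric equation $\pi_{q_0,q_1}(\alpha) = q_0/q_1$, which will likely use a careful splitting of $\alpha$ into its $0$- and $1$-valued positions and a finer estimate than the naive geometric bound $\pi_{q_0,q_1}(\alpha) \leq 1/(q_1-1)$.
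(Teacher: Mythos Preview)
The paper does not prove this lemma; it is quoted verbatim from \cite[Lemmas~4.1 and~4.2]{KomSteZou2022}, so there is no argument here to compare against. That said, your proposal contains a genuine gap in part~(i).

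You claim that $\pi_{q_0,q_1}(\sigma\alpha)$ diverges as $q_1\to 1^+$ ``because $\sigma\alpha$ still has infinitely many $1$'s.'' This is false. Since $(\mu,\alpha)\in\mathcal{B}'$, the sequence $\alpha$ satisfies $\sigma^n(\alpha)\preccurlyeq\alpha$ for all $n$; if $\alpha$ begins with $1^{m-1}0$ then every run of $1$'s in $\alpha$ has length at most $m-1$, so every block of $m$ consecutive digits contains a $0$, and $\pi_{q_0,1}(\sigma\alpha)$ is finite for each fixed $q_0>1$. Concretely, for $\alpha=1(10)^\infty$ one has $\pi_{q_0,q_1}(\sigma\alpha)=q_0/(q_0q_1-1)$, which tends to $q_0/(q_0-1)$ as $q_1\to 1^+$; at $q_0=3$ this equals $3/2<q_0-1=2$, so your intermediate-value argument produces no solution $q_1>1$. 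This is consistent with the lemma---here $q_\alpha=(3+\sqrt5)/2<3$ and indeed $g_\alpha(3)=5/6<1$---but it shows that your search for $g_\alpha(q_0)$ restricted to $q_1>1$ fails precisely on the range $q_0\ge q_\alpha$, which parts~(ii)--(iii) explicitly require. A correct argument must analyze the equation on a larger $q_1$-interval and re-establish convergence of the series there.

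Your treatment of~(iii) is also incomplete by your own admission: the bound $\tilde g_\mu(q_0)\le q_0/(q_0-1)$ is correct but itself diverges as $q_0\to 1^+$, so it does not by itself yield $g_\alpha(q_0)-\tilde g_\mu(q_0)\to+\infty$; the sharper two-sided rate comparison you flag as the ``main obstacle'' is indeed needed and is not supplied.
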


\begin{lemma}\label{l:43KomSteZou2022}\cite [Lemma 4.3]{KomSteZou2022}
	Let $q_0  > 1$.
	Then the map
	\begin{equation*}
		\mathcal{G}:\, \alpha \to \big(1,\tfrac{q_0}{q_0-1}\big), \quad \alpha \mapsto g_{\alpha}(q_0),
	\end{equation*}
	is a continuous order-preserving bijection, and the map
	\begin{equation*}
		\tilde{\mathcal{G}}:\, \mu\to \big(1,\tfrac{q_0}{q_0-1}\big), \quad \mu \mapsto \tilde{g}_{\mu}(q_0),
	\end{equation*}
	is a continuous order-reversing bijection.
\end{lemma}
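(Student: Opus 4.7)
\textbf{Proof proposal for Lemma \ref{l:43KomSteZou2022}.}  With $q_0>1$ fixed, the plan is to identify $\mathcal{G}$ as the two-sided inverse of the parameter-to-sequence map $\Psi\colon (1, q_0/(q_0-1))\to\Sigma_2^1$, $q_1\mapsto\alpha(q_0,q_1)$, and likewise $\tilde{\mathcal{G}}$ as the inverse of $\tilde\Psi(q_1):=\mu(q_0,q_1)$; the three properties (bijectivity, continuity, order) will then be read off separately.  Bijectivity follows directly from Lemma~\ref{l:functions}(i): each admissible $\alpha$ pins down $q_1=g_\alpha(q_0)$ uniquely, and conversely the identity $\pi_{q_0,q_1}(\alpha(q_0,q_1))=q_0/q_1$ built into the definition of the quasi-greedy expansion yields $g_{\Psi(q_1)}(q_0)=q_1$.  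The analogous statement for $\tilde{\mathcal{G}}$ uses $\tilde\pi_{q_0,q_1}(\mu(q_0,q_1))=q_1/q_0$ in the same way.

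For continuity, suppose $\alpha^{(k)}\to\alpha$ in the product topology on $\Sigma_2$.  Each summand of $\pi_{q_0,q_1}(\cdot)$ is bounded in modulus by $(\min\{q_0,q_1\})^{-n}$, so the series converges uniformly for $q_1$ in any compact subinterval of $(1,q_0/(q_0-1))$.  Consequently $f_{\alpha^{(k)}}(q_0,\cdot)\to f_\alpha(q_0,\cdot)$ uniformly on compacta.  Coupled with the unique-root property from Lemma~\ref{l:functions}(i), a standard root-tracking argument (choose $\varepsilon>0$, use uniform convergence to deduce $f_{\alpha^{(k)}}(q_0,\cdot)$ has the same sign as $f_\alpha(q_0,\cdot)$ at the endpoints of an $\varepsilon$-neighbourhood of $g_\alpha(q_0)$, apply the intermediate value theorem and uniqueness) then gives $g_{\alpha^{(k)}}(q_0)\to g_\alpha(q_0)$.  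The same reasoning handles $\tilde{\mathcal{G}}$.

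For the order property, the cleanest route is to invoke the monotonicity statement recorded in Remark~\ref{r:monotonic}(iii) (i.e.\ Lemma~3.3 of \cite{KomSteZou2022}): for fixed $q_0$ and $q_1<q_1'$ with both $(q_0,q_1),(q_0,q_1')\in\mathcal{B}$, one has strictly $\alpha(q_0,q_1)\prec\alpha(q_0,q_1')$ and $\mu(q_0,q_1)\succ\mu(q_0,q_1')$.  Passing to inverses, $\mathcal{G}$ is strictly order-preserving and $\tilde{\mathcal{G}}$ strictly order-reversing.

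The main obstacle is precisely this strict monotonicity of $\Psi$ and $\tilde\Psi$ in $q_1$.  Citing Remark~\ref{r:monotonic}(iii) short-circuits the analysis, but a self-contained derivation would require a careful dynamical/combinatorial comparison of the quasi-greedy (resp.\ quasi-lazy) algorithm applied to the moving targets $r_{q_0,q_1}$ (resp.\ $\ell_{q_0,q_1}$) while the base itself varies with $q_1$; one would first obtain weak monotonicity from continuity and then rule out equality using uniqueness of the quasi-greedy (resp.\ quasi-lazy) expansion of a given value together with Lemma~\ref{l:functions}.
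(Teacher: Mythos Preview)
The paper does not supply its own proof of this lemma; it is quoted verbatim from \cite[Lemma~4.3]{KomSteZou2022}, so there is no in-paper argument to compare against. What follows is an assessment of your sketch on its own merits.

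Your continuity argument is sound: uniform convergence of $f_{\alpha^{(k)}}(q_0,\cdot)\to f_\alpha(q_0,\cdot)$ on compact subintervals of $(1,q_0/(q_0-1))$, combined with the uniqueness of the root from Lemma~\ref{l:functions}(i), does force the roots to converge.

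There is, however, a circularity in the bijectivity and order parts. From $\pi_{q_0,q_1}(\alpha(q_0,q_1))=q_0/q_1$ you correctly obtain $\mathcal{G}\circ\Psi=\mathrm{id}$, which gives surjectivity of $\mathcal{G}$ and injectivity of $\Psi$. But you never establish $\Psi\circ\mathcal{G}=\mathrm{id}$, i.e.\ that every admissible $\alpha$ in the domain is actually the quasi-greedy expansion of $q_0/q_1$ at the value $q_1=g_\alpha(q_0)$. Without this, $\mathcal{G}$ is not yet known to be injective, so it is not yet known to be the two-sided inverse of $\Psi$. Your order argument then begs the question: you deduce strict monotonicity of $\mathcal{G}$ by ``passing to inverses'' from the strict monotonicity of $\Psi$ supplied by Remark~\ref{r:monotonic}(iii), but that step is only valid once $\mathcal{G}=\Psi^{-1}$ has been established.

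The missing ingredient is precisely the double-base analogue of Proposition~\ref{p:lexi of 1}: one must check that an infinite sequence $\alpha\in\Sigma_2^1$ satisfying $\sigma^n(\alpha)\preccurlyeq\alpha$ for all $n\ge 0$ and $\pi_{q_0,q_1}(\alpha)=q_0/q_1$ is necessarily the quasi-greedy expansion of $r_{q_0,q_1}$. Alternatively, one can break the circle by proving the order property of $\mathcal{G}$ directly (if $g_\alpha(q_0)=g_{\alpha'}(q_0)=q_1$ then $\alpha,\alpha'$ are both admissible expansions of $q_0/q_1$, and uniqueness of the quasi-greedy expansion forces $\alpha=\alpha'$); this yields injectivity independently, after which your monotonicity transfer via $\Psi$ becomes legitimate. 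Either route requires the lexicographic characterisation of quasi-greedy double-base expansions from \cite{KomLuZou2022}, which you should cite explicitly rather than leave implicit.
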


\color{black}
The following Proposition \ref{l: bijective-map} plays a critical role in the proof of Theorem \ref{t:main1}, which generalizes the result of Lemma 4.4 in \cite{KomSteZou2022}. \color{black}
\begin{proposition}\label{l: bijective-map}
	The map $\Phi: \mathcal{B} \to \mathcal{B}'$ defined by $\Phi((q_0, q_1)) = (\mu(q_0,q_1), \alpha(q_0,q_1))$ is bijective.
\end{proposition}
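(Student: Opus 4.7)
The plan is to prove bijectivity in three stages -- well-definedness, injectivity, and surjectivity -- all organized around the two curves $q_1 = g_\alpha(q_0)$ and $q_1 = \tilde g_\mu(q_0)$ supplied by Lemma \ref{l:functions}. For well-definedness, I would verify that if $(q_0,q_1) \in \mathcal{B}$ then $(\mu(q_0,q_1), \alpha(q_0,q_1)) \in \mathcal{B}'$. The first-digit conditions $\mu \in \Sigma_2^0$, $\alpha \in \Sigma_2^1$, the co-infiniteness of $\mu$, and the infiniteness of $\alpha$ all follow directly from the quasi-lazy and quasi-greedy algorithms recalled in Section \ref{sec:preliminaries}; the shift inequalities $\mu \preccurlyeq \sigma^m(\mu)$ and $\sigma^n(\alpha) \preccurlyeq \alpha$ are exactly the standard lexicographic characterizations of these expansions proven in \cite{KomLuZou2022}. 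Non-membership in $\mathcal{C}'$ follows from Remark \ref{r:monotonic}(i), since the degenerate sequences comprising $\mathcal{C}'$ correspond precisely to the boundary case $q_0+q_1 = q_0 q_1$, which is excluded from $\mathcal{B}$.

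Next, both injectivity and surjectivity reduce to a single statement: for every $(\mu,\alpha) \in \mathcal{B}'$ the equation $g_\alpha(x) = \tilde g_\mu(x)$ has a unique solution in $(1, q_\alpha)$. Indeed, for injectivity, if $\Phi(q_0,q_1) = \Phi(q_0',q_1') = (\mu,\alpha)$, then by Lemma \ref{l:functions}(i) the coordinates $q_1$ and $q_1'$ are $g_\alpha(q_0)$ and $g_\alpha(q_0')$ and also $\tilde g_\mu(q_0)$ and $\tilde g_\mu(q_0')$, so both $q_0, q_0'$ are zeros of $F := g_\alpha - \tilde g_\mu$; uniqueness of the zero therefore forces $(q_0,q_1) = (q_0',q_1')$. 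Conversely, given $(\mu,\alpha) \in \mathcal{B}'$, the unique zero $q_0^*$ together with $q_1^* := g_\alpha(q_0^*)$ yields a candidate preimage. Membership of $(q_0^*,q_1^*)$ in $\mathcal{B}$ follows because $(\mu,\alpha) \notin \mathcal{C}'$ rules out the boundary curve $q_0 + q_1 = q_0 q_1$ via Remark \ref{r:monotonic}(i). The shift conditions in $\mathcal{B}'$ then force $\alpha$ to be the lexicographically largest infinite expansion of $r_{q_0^*,q_1^*}$ and $\mu$ the lexicographically smallest co-infinite expansion of $\ell_{q_0^*,q_1^*}$, i.e., $\Phi(q_0^*,q_1^*) = (\mu,\alpha)$.

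Existence of a solution of $F(x) = 0$ in $(1, q_\alpha)$ is immediate from Lemma \ref{l:functions}(ii)--(iii) combined with the intermediate value theorem. The main obstacle is \emph{uniqueness}: both $g_\alpha$ and $\tilde g_\mu$ are strictly decreasing, so nothing a priori prevents their graphs from crossing more than once. My plan is to exploit the opposite monotonicity established in Lemma \ref{l:43KomSteZou2022} -- the map $\alpha \mapsto g_\alpha(q_0)$ is order-preserving while $\mu \mapsto \tilde g_\mu(q_0)$ is order-reversing -- to show that at every zero $q_0^*$ of $F$ the two curves cross transversally in the sense $g_\alpha'(q_0^*) < \tilde g_\mu'(q_0^*)$. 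The slopes are accessible via the implicit function theorem applied to $f_\alpha$ and $\tilde f_\mu$, and the desired strict inequality should be extracted from the shift inequalities $\mu \preccurlyeq \sigma^m(\mu)$ and $\sigma^n(\alpha) \preccurlyeq \alpha$ that cut out $\mathcal{B}'$. Once $F'(q_0^*) < 0$ is known at every zero, $F$ cannot return to zero after passing through, and the solution in $(1, q_\alpha)$ is unique. This is exactly the generalization of \cite[Lemma 4.4]{KomSteZou2022}, where bijectivity was established only for a restricted subclass; the principal difficulty is propagating the slope comparison across the full lexicographic class $\mathcal{B}'$.
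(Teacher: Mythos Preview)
Your architecture matches the paper's exactly: reduce bijectivity to the claim that $F = g_\alpha - \tilde g_\mu$ has a unique zero in $(1,q_\alpha)$; obtain existence from Lemma~\ref{l:functions}(ii)--(iii) plus the intermediate value theorem; and obtain uniqueness by proving the transversality $g_\alpha'(x) < \tilde g_\mu'(x)$ at every zero, with the derivatives computed via implicit differentiation of $f_\alpha$ and $\tilde f_\mu$. The surrounding steps (well-definedness, and recovering $\Phi(q_0^*,q_1^*)=(\mu,\alpha)$ from the shift conditions) are as you describe.

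The gap is in how you plan to prove the transversality inequality itself. Neither of your two proposed mechanisms works. Lemma~\ref{l:43KomSteZou2022} concerns monotonicity of $g_\alpha(q_0)$ and $\tilde g_\mu(q_0)$ in the \emph{sequence} variables at fixed $q_0$; it says nothing about their derivatives in $q_0$ and does not yield the slope comparison. And the shift inequalities defining $\mathcal{B}'$ play no role in the transversality step: the paper's argument works for \emph{any} $\alpha = 10^{n_1}10^{n_2-n_1}\cdots$ and $\mu = 01^{\tilde n_1}01^{\tilde n_2-\tilde n_1}\cdots$ with infinitely many $1$'s (resp.\ $0$'s), using only that the exponent sequences $(n_k)$, $(\tilde n_\ell)$ are non-decreasing --- which is automatic from this representation, not a consequence of the shift constraints. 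After implicitly differentiating and simplifying using $f_\alpha = \tilde f_\mu = 0$, the transversality condition becomes the double-series inequality of Proposition~\ref{p1}:
\[
\sum_{k,\ell\ge 1} \frac{\frac{xy}{(x-1)(y-1)} - k\ell}{x^{\,n_k+\ell}\, y^{\,k+\tilde n_\ell}} \ge 0,
\]
strict unless both sequences are constant. This inequality is the technical heart of the proof and requires its own argument (Lemmas~\ref{l1}--\ref{l2}): one verifies equality for constant sequences, then shows that flattening any jump $n_{k'+1}>n_{k'}$ strictly decreases the sum, so a non-constant sequence cannot give a non-positive value. You have correctly located where the difficulty lies, but the tools you name will not close it; something of the type of Proposition~\ref{p1} is needed.
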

\begin{proof}
	We show that there exists a unique pair $(q_0(\mu, \alpha), q_1(\mu, \alpha))\in\mathcal{B}$  if  $(\mu, \alpha)\in\mathcal{B}'$, and then such that  $\alpha$ is the quasi-greedy expansion of $r_{q_0, q_1}$ and  $\mu$ is the quasi-lazy expansion of $\ell_{q_0, q_1}$. Since $\alpha\prec1^\infty$, we have $\pi_{q_0, q_1}(\alpha)=q_0/q_1<\pi_{q_0, q_1}(1^\infty)=1/(q_1-1)$, this implies  $q_0+q_1>q_0q_1$. 
	
	It follows from  Lemma \ref{l:functions} that  the two equations $	f_{\alpha}(q_0, q_1) =0$ and $\tilde{f}_{\mu}(q_0,q_1)=0$ have unique solutions $g_{\alpha}(q_0)$ and ~$\tilde{g}_{\mu}(q_0)$, respectively.  Next,  we  show that the equation ~$g_{\alpha}(q_0)= \tilde{g}_{\mu}(q_0)$ has a unique solution $q_0 > 1$. It is shown in Lemma \ref{l:functions} that $g_{\alpha}(x)$ and $\tilde{g}_{\mu}(x)$ are continuous functions with
	\begin{equation*}
		\lim_{q_0\to 1} (g_{\alpha}(q_0)-\tilde{g}_{\mu}(q_0)) = \infty \quad \text{and} \quad
		g_{\alpha}(q_0) - \tilde{g}_{\mu}(q_0) < 0 \quad \text{for all}\ q_0 \ge q_{\alpha}.
	\end{equation*}
	Moreover, the functions are differentiable on $(1,q_{\alpha})$ with
	\begin{equation*}
		g_{\alpha}'(x) = - \frac{D_1 f_{\alpha}(x, g_{\alpha}(x))}{D_2 f_{\alpha}(x, g_{\alpha}(x))}, \quad 
		\tilde{g}_{\mu}'(x) = - \frac{D_1 \tilde{f}_{\mu}(x, \tilde{g}_{\mu}(x))}{D_2 \tilde{f}_{\mu}(x, \tilde{g}_{\mu}(x))}.
	\end{equation*}
	Therefore, it suffices to show that $g_{\alpha}'(x) < \tilde{g}_{\mu}'(x)$ whenever $g_{\alpha}(x)=\tilde{g}_{\mu}(x)$, i.e.,
	\begin{equation} \label{e:partial1}
		D_1 f_{\alpha}(x,y) D_2 \tilde{f}_{\mu}(x,y) - D_2 f_{\alpha}(x,y) D_1 \tilde{f}_{\mu}(x,y) > 0
	\end{equation}
	whenever $f_{\alpha}(x,y) = 0 = \tilde{f}_{\mu}(x,y)$, $1 < x < q_{\alpha}$, $y > 1$.
	
	Write
	\begin{equation*}
		\alpha = 10^{n_1}10^{n_2-n_1}10^{n_3-n_2}\cdots \quad \text{and} \quad \mu = 01^{\tilde{n}_1}01^{\tilde{n}_2-\tilde{n}_1}01^{\tilde{n}_3-\tilde{n}_2}\cdots
	\end{equation*}
	with $0 \le n_1 \le n_2 \le \cdots$, $0 \le \tilde{n}_1 \le \tilde{n}_2 \le \cdots$.
	Then we have
	\begin{equation*}
		f_{\alpha}(x,y) = \frac{1}{y}\left(1-x + \sum_{k=1}^\infty \frac{1}{y^kx^{n_k}}\right),
		\quad \tilde{f}_{\mu}(x,y) = \frac{1}{x}\left(1-y+ \sum_{k=1}^\infty \frac{1}{x^ky^{\tilde{n}_k}}\right),
	\end{equation*}
	hence \eqref{e:partial1} means that
	\begin{equation*} \label{e:partial2}
		\bigg(1 + \sum_{k=1}^\infty \frac{n_k}{y^kx^{n_k+1}}\bigg) \bigg(1 + \sum_{\ell=1}^\infty \frac{\tilde{n}_\ell}{x^\ell y^{\tilde{n}_\ell+1}}\bigg) - \bigg(\frac{1}{y}-\frac{x}{y}+\sum_{k=1}^\infty \frac{k+1}{y^{k+1}x^{n_k}}\bigg)\, \bigg(\frac{1}{x}-\frac{y}{x}+\sum_{\ell=1}^\infty \frac{\ell+1}{x^{\ell+1}y^{\tilde{n}_\ell}} \bigg)> 0.
	\end{equation*}
	By noting that $f_{\alpha}(x,y) = 0 = \tilde{f}_{\mu}(x,y)$, this inequality reduces to
	\begin{equation} \label{e:partial4}
		\bigg(1 + \sum_{k=1}^\infty \frac{n_k}{y^kx^{n_k+1}}\bigg) \bigg(1 + \sum_{\ell=1}^\infty \frac{\tilde{n}_\ell}{x^\ell y^{\tilde{n}_\ell+1}}\bigg) - \sum_{k=1}^\infty \frac{k}{y^{k+1}x^{n_k}}\, \sum_{\ell=1}^\infty \frac{\ell}{x^{\ell+1}y^{\tilde{n}_\ell}} > 0.
	\end{equation}
	Applying $f_{\alpha}(x,y) = 0 = \tilde{f}_{\mu}(x,y)$ again, we have $$\frac{1}{x-1} \sum_{k=1}^\infty \frac{1}{y^kx^{n_k}} = 1 = \frac{1}{y-1} \sum_{\ell=1}^\infty \frac{1}{x^\ell y^{\tilde{n}_\ell}}.$$
	Inserting this into \eqref{e:partial4} and multiplying by $xy$ gives the inequality	
	\begin{equation} \label{e:partial5}
		\left( 1 + \sum_{k=1}^\infty \frac{n_k}{y^k x^{n_k + 1}} \right) 
		\left( 1 + \sum_{\ell=1}^\infty \frac{\tilde{n}_\ell}{x^\ell y^{\tilde{n}_\ell + 1}} \right) 
		- \sum_{k=1}^\infty \frac{k}{y^{k+1} x^{n_k}} \sum_{\ell=1}^\infty \frac{\ell}{x^{\ell+1} y^{\tilde{n}_\ell}} 
		> 0.
	\end{equation}
	\eqref{e:partial5}  follows from \color{black} Proposition \ref{p1} (shown in below) 
	since $n_k\ge 0$ , $n_\ell\ge 0$ for all $k\ge 1$ and $\ell\ge 1$.
	
	Now we turn to prove the map is  injective.  For $(q_0, q_1)\in \mathcal{B}$, using the quasi-greedy and quasi-lazy  algorithm in \cite{KomLuZou2022}, we  obtain the unique quasi-greedy expansion $\alpha(q_0, q_1)$ of $r_{q_0, q_1}$ and the unique quasi-lazy expansion $\mu(q_0, q_1)$ of $\ell_{q_0, q_1}$.  In other words,	$ \mu(q_0, q_1) \preccurlyeq\sigma^{i}(\mu(q_0, q_1) )$ and  $ \sigma^{j}(\alpha(q_0, q_1))\preccurlyeq\alpha(q_0, q_1)$ for all $i, j\ge 0$. Furthermore, $\mu(q_0, q_1)\neq 0^\infty$ and $\alpha(q_0, q_1)\neq 1^\infty$ by Remark \ref{r:monotonic} (i). Hence, $(\mu(q_0, q_1), \alpha(q_0, q_1))\in\mathcal{B}'$.
	\color{black} If there exists another $(q'_0, q'_1)\in \mathcal{B}$ such that $(\mu(q'_0, q'_1), \alpha(q'_0, q'_1))=(\mu(q_0, q_1), \alpha(q_0, q_1))\in\mathcal{B}'$. This contradicts the fact that there  exists a unique pair $(q_0(\mu, \alpha), q_1(\mu, \alpha))\in\mathcal{B}$  if  $(\mu, \alpha)\in\mathcal{B}'$. \color{black}
\end{proof}

\begin{lemma}\label{p:inverse-map}
	$\Phi^{-1}$ is continuous in $\mathcal{B}'$.
\end{lemma}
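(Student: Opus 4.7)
The plan is to prove continuity by a subsequence argument, exploiting the uniqueness established in Proposition \ref{l: bijective-map}. Fix $(\mu,\alpha)\in\mathcal{B}'$ and let $(q_0^*,q_1^*):=\Phi^{-1}(\mu,\alpha)$. Given any sequence $(\mu^{(n)},\alpha^{(n)})\to(\mu,\alpha)$ in $\mathcal{B}'$ (with respect to the metric \eqref{eq:distanceSym}), set $(q_0^{(n)},q_1^{(n)}):=\Phi^{-1}(\mu^{(n)},\alpha^{(n)})$. To conclude $(q_0^{(n)},q_1^{(n)})\to(q_0^*,q_1^*)$ in the topology \eqref{eq:distanceR2}, it suffices to show that every subsequence of $\{(q_0^{(n)},q_1^{(n)})\}$ admits a further subsequence converging to $(q_0^*,q_1^*)$.

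First, I would confine $\{(q_0^{(n)},q_1^{(n)})\}$ to a compact subset $K\subset\mathcal{B}$. For an upper bound on $q_0^{(n)}$, Lemma \ref{l:functions}(iv) gives $q_0^{(n)}<q_{\alpha^{(n)}}$; since $\alpha\neq 1^\infty$ there is a first index $m$ with $\alpha_m=0$, and for $n$ large enough $\alpha^{(n)}$ agrees with $\alpha$ on the first $m$ coordinates, which yields $\alpha^{(n)}\preccurlyeq 1^{m-1}01^\infty$ and hence a uniform upper bound on $q_{\alpha^{(n)}}$. A symmetric argument with $\tilde g_{\mu^{(n)}}$ and the fact $\mu\neq 0^\infty$ produces an upper bound on $q_1^{(n)}$. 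To keep the coordinates bounded away from $1$, note that $q_0^{(n)}\to 1$ or $q_1^{(n)}\to 1$ would force $\pi_{q_0^{(n)},q_1^{(n)}}(\alpha^{(n)})$ or $\tilde\pi_{q_0^{(n)},q_1^{(n)}}(\mu^{(n)})$ to diverge unless the sequences degenerated to $1^\infty$ or $0^\infty$, contradicting the limit; similarly, proximity to the boundary curve $\mathcal{C}$ is ruled out by Remark \ref{r:monotonic}(i) which identifies $\mathcal{C}$ with $\alpha=1^\infty$ or $\mu=0^\infty$.

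Second, extract any convergent subsequence $(q_0^{(n_k)},q_1^{(n_k)})\to(q_0^\infty,q_1^\infty)\in K$. I would then invoke the joint continuity of the evaluation map $(q_0,q_1,\omega)\mapsto \pi_{q_0,q_1}(\omega)$ on $K\times\Sigma_2$: uniform geometric decay of the tails on $K$ combined with eventual agreement of $\alpha^{(n_k)}$ with $\alpha$ on long prefixes gives
\[
\pi_{q_0^\infty,q_1^\infty}(\alpha)=\lim_{k\to\infty}\pi_{q_0^{(n_k)},q_1^{(n_k)}}(\alpha^{(n_k)})=\lim_{k\to\infty}\frac{q_0^{(n_k)}}{q_1^{(n_k)}}=\frac{q_0^\infty}{q_1^\infty},
\]
and analogously $\tilde\pi_{q_0^\infty,q_1^\infty}(\mu)=q_1^\infty/q_0^\infty$. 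This says exactly that $(\mu,\alpha)$ is the pair of quasi-lazy/quasi-greedy expansions arising from $(q_0^\infty,q_1^\infty)$; that is, $\Phi(q_0^\infty,q_1^\infty)=(\mu,\alpha)$. Injectivity of $\Phi$ from Proposition \ref{l: bijective-map} now forces $(q_0^\infty,q_1^\infty)=(q_0^*,q_1^*)$, completing the subsequence argument.

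The main obstacle will be the compactness step, in particular preventing the coordinates $(q_0^{(n)},q_1^{(n)})$ from accumulating on any of the three boundary pieces ($\{q_0=1\}$, $\{q_1=1\}$, and the hyperbola $\mathcal{C}$) where $\Phi$ degenerates. This rests on a careful use of the monotonicity statements in Lemma \ref{l:functions}(ii)-(iii) together with Lemma \ref{l:43KomSteZou2022}, to translate the non-degeneracy of the limit pair $(\mu,\alpha)\in\mathcal{B}'$ (in particular $\mu\neq 0^\infty$ and $\alpha\neq 1^\infty$) into quantitative bounds that are uniform in $n$.
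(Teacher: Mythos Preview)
Your subsequence/compactness strategy is a genuinely different route from the paper's proof. The paper argues directly with the implicit functions: it uses Lemma~\ref{l:43KomSteZou2022} to get $|g_{\alpha}(x)-g_{\alpha^n}(x)|$ and $|\tilde g_{\mu}(x)-\tilde g_{\mu^n}(x)|$ small, then evaluates $g_\alpha-\tilde g_\mu$ at $q_0(\mu^n,\alpha^n)$ and invokes the sign change of $g_\alpha-\tilde g_\mu$ at its unique zero (Lemma~\ref{l:functions}(v)) to force $q_0(\mu^n,\alpha^n)$ close to $q_0(\mu,\alpha)$. Your approach trades this explicit perturbation for a cleaner ``limit point must solve the same equations'' argument; conceptually this is just as good, and it avoids the (slightly delicate) uniformity-in-$x$ issue hidden in the paper's use of Lemma~\ref{l:43KomSteZou2022}.

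There is, however, a real gap in your Step~5. From $\pi_{q_0^\infty,q_1^\infty}(\alpha)=q_0^\infty/q_1^\infty$ and $\tilde\pi_{q_0^\infty,q_1^\infty}(\mu)=q_1^\infty/q_0^\infty$ you may \emph{not} conclude that ``$(\mu,\alpha)$ is the pair of quasi-lazy/quasi-greedy expansions arising from $(q_0^\infty,q_1^\infty)$'': these identities only say that $\alpha$ is \emph{some} expansion of $r_{q_0^\infty,q_1^\infty}$ and $\mu$ is \emph{some} expansion of $\ell_{q_0^\infty,q_1^\infty}$, not that they are the extremal (quasi-greedy/quasi-lazy) ones. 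So the sentence ``$\Phi(q_0^\infty,q_1^\infty)=(\mu,\alpha)$'' is not justified, and invoking injectivity of $\Phi$ is premature. The fix is to bypass $\Phi$ entirely and appeal instead to what Proposition~\ref{l: bijective-map} actually proves: for $(\mu,\alpha)\in\mathcal{B}'$ the system $f_\alpha(q_0,q_1)=0=\tilde f_\mu(q_0,q_1)$ has a \emph{unique} solution in $\mathcal{B}$, namely $\Phi^{-1}(\mu,\alpha)$. Your limit computation shows precisely that $(q_0^\infty,q_1^\infty)$ solves this system, so uniqueness gives $(q_0^\infty,q_1^\infty)=(q_0^*,q_1^*)$ directly. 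For this you must also verify $(q_0^\infty,q_1^\infty)\in\mathcal{B}$, which is exactly the boundary exclusion you flag as the main obstacle; the upper bound $q_0^{(n)}<q_{\alpha^{(n)}}$ from Lemma~\ref{l:functions}(iv) together with the prefix $1^{m-1}0$ of $\alpha^{(n)}$ does give a uniform bound $q_{\alpha^{(n)}}\le m$ (use $\sigma^j(\alpha^{(n)})\preccurlyeq\alpha^{(n)}$ to control runs of $1$'s), and symmetrically for $q_1^{(n)}$, so this part is attainable with the tools at hand.
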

\begin{proof}
	Fix $(\mu, \alpha)\in \mathcal{B}'$ and let $(\mu^n, \alpha^n)\in \mathcal{B}'$ satisfy
	$$d_{\Sigma_{2} \times \Sigma_{2}}((\mu, \alpha),(\mu^n, \alpha^n))<\delta_1:=\delta_1(\mu^n, \alpha^n).$$	
	\color{black} Here, $\delta_1$ is a arbitrary small positive number depending on $(\mu^n, \alpha^n)$. \color{black} We have
	$$\Phi^{-1}((\mu^n, \alpha^n))=(q_0(\mu^n, \alpha^n), q_1(\mu^n, \alpha^n))\text{ with } 1<q_0(\mu^n, \alpha^n)<q_{\alpha^n}. $$
	Then it follows from Lemma \ref{l:43KomSteZou2022} that $$|\tilde{g}_\mu(x)-\tilde{g}_{\mu^n}(x)|<\delta_2(\delta_1),\;\; |g_\alpha(x)-g_{\alpha^n}(x)|<\delta_3(\delta_1)\text{ for any } x>1.$$
	\color{black} 
	$\delta_2, \delta_3 \to 0$ as  $\delta_1\to 0$. \color{black}
	This together with $\tilde{g}_{\mu}(q_0(\mu, \alpha))=g_{\alpha}(q_0(\mu, \alpha))$ and $\tilde{g}_{\mu^n}(q_0(\mu^n, \alpha^n))=g_{\alpha^n}(q_0(\mu^n, \alpha^n))$ leads to
	\begin{equation}\label{e:43}
		\begin{split}
			&|g_{\alpha}(q_0(\mu^n, \alpha^n))-\tilde{g}_{\mu}(q_0(\mu^n, \alpha^n))|\\
			=&|g_{\alpha}(q_0(\mu^n, \alpha^n))-g_{\alpha^n}(q_0(\mu^n, \alpha^n))+g_{\alpha^n}(q_0(\mu^n, \alpha^n))-\tilde{g}_{\mu^n}(q_0(\mu^n, \alpha^n))\\&+\tilde{g}_{\mu^n}(q_0(\mu^n, \alpha^n))-\tilde{g}_{\mu}(q_0(\mu^n, \alpha^n))|
			<\delta_3(\delta_1)+\delta_2(\delta_1)=\delta_4(\delta_1).\\
		\end{split}
	\end{equation}
	Similarly, $\delta_4\to 0$ as  $\delta_1\to 0$. In the proof of Lemma \ref{l: bijective-map}, we have shown that the function $g_\alpha(x)-\tilde{g}_\mu(x)$ is continuous in $x$ and  have a unique solution $q_0(\mu, \alpha)$ in $(1, q_{\alpha})$. We also note that both $q_0(\mu, \alpha)$ and  $q_0(\mu^n, \alpha^n)$ are bounded.
	These facts together with \eqref{e:43} imply that
	$$|q_0(\mu, \alpha)-q_0(\mu^n, \alpha^n)|<\delta_5(\delta_4).$$ Here $\delta_5\to 0$ as  $\delta_4\to 0$. Hence we have
	\begin{equation*}
		\begin{split}
			|q_1(\mu, \alpha)-q_1(\mu^n, \alpha^n)|&=|g_\alpha(q_0(\mu, \alpha))-g_{\alpha^n}(q_0(\mu^n, \alpha^n))|\\&=|g_\mu(q_0(\mu, \alpha))-g_\alpha(q_0(\mu^n,\alpha^n))+g_\alpha(q_0(\mu^n, \alpha^n))-g_{\alpha^n}(q_0(\mu^n, \alpha^n))|\\
			&\leq|g_\alpha(q_0(\mu, \alpha))-g_{\alpha}(q_0(\mu^n, \alpha^n))|+|g_\alpha(q_0(\mu^n, \alpha^n))-g_{\alpha^n}(q_0(\mu^n, \alpha^n))|\\
			&<\delta_6(\delta_5)+\delta_3(\delta_1).\\
		\end{split}
	\end{equation*}
	Here $\delta_6\to 0$ as  $\delta_5\to 0$.  \color{black}
\end{proof}
\begin{proof}[Proof of Theorem \ref{t:main1} ] 
	It follows from Proposition \ref{l: bijective-map} and Lemma \ref{p:inverse-map}.
\end{proof}
\begin{proposition}\label{p1}
	Fix two real numbers $x,y>1$.
	For any two non-decreasing sequences $(n_k)$ and $\tilde n_{\ell}$ of real numbers, the following inequality holds:
	\begin{equation*}
		S\bj{(n_k), (\tilde n_{\ell})}:=\sum_{k=1}^{\infty}\sum_{\ell=1}^{\infty}\frac{\frac{xy}{(x-1)(y-1)}-k\ell}{x^{n_k+\ell}y^{k+\tilde n_{\ell}}}\ge 0.
	\end{equation*}
	Moreover, equality holds if and only if the sequences $(n_k)$ and $(\tilde n_{\ell})$  are constant.
\end{proposition}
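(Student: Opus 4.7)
The plan is to reduce to the positivity of rectangular partial sums via Abel summation, and then exploit an antisymmetry cancellation in the diagonal block. Set $c := \frac{xy}{(x-1)(y-1)}$, $a_k := x^{-n_k}$, $b_\ell := y^{-\tilde n_\ell}$, and $u_{k\ell} := \frac{c - k\ell}{x^\ell y^k}$, so that $S\bigl((n_k),(\tilde n_\ell)\bigr) = \sum_{k, \ell \geq 1} a_k b_\ell\, u_{k\ell}$. Since $(n_k)$ and $(\tilde n_\ell)$ are non-decreasing, the sequences $(a_k),(b_\ell)$ are positive and non-increasing, with limits $a_\infty, b_\infty \geq 0$. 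Writing $a_k = a_\infty + \sum_{i\geq k}\Delta a_i$ with $\Delta a_i := a_i - a_{i+1} \geq 0$ (and the analogous decomposition for $b_\ell$), substituting, and interchanging the order of summation yields
\begin{equation*}
S = a_\infty b_\infty\, U_{\infty,\infty} + a_\infty \sum_{j\geq 1}\Delta b_j\, U_{\infty, j} + b_\infty \sum_{i\geq 1}\Delta a_i\, U_{i,\infty} + \sum_{i, j\geq 1}\Delta a_i\,\Delta b_j\, U_{ij},
\end{equation*}
where $U_{ij} := \sum_{k\leq i,\,\ell\leq j} u_{k\ell}$ (allowing $i$ or $j = \infty$). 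All coefficients in this expansion are non-negative, so the task reduces to showing $U_{ij}\geq 0$ for every $1\leq i, j\leq \infty$.

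This partial-sum positivity is the main obstacle. Introducing $P_m(z):=\sum_{k=1}^m z^{-k}$ and $Q_m(z):=\sum_{k=1}^m k z^{-k}$, one checks that $R_\infty(z) := P_\infty(z)/Q_\infty(z) = (z-1)/z$, so $c = 1/\bigl(R_\infty(x) R_\infty(y)\bigr)$. Since $U_{ij} = c\, P_i(y) P_j(x) - Q_i(y) Q_j(x) = Q_i(y) Q_j(x)\bigl(c\, R_i(y) R_j(x) - 1\bigr)$ with $R_m := P_m/Q_m$, it suffices to prove the one-variable inequality $R_m(z) \geq R_\infty(z)$ for each finite $m$, with equality only at $m = \infty$. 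Rearranged, this reads
\begin{equation*}
P_m(z) Q_\infty(z) - Q_m(z) P_\infty(z) = \sum_{k=1}^m\sum_{j=1}^\infty \frac{j-k}{z^{k+j}} = \sum_{k=1}^m\sum_{j=m+1}^\infty \frac{j-k}{z^{k+j}} > 0,
\end{equation*}
since the diagonal block $\{(k, j) : 1 \leq k, j \leq m\}$ is antisymmetric under $k \leftrightarrow j$ and vanishes, while in the remaining block $j \geq m+1 > m \geq k$, making every summand strictly positive. Thus $U_{ij} > 0$ for all finite $i, j \geq 1$; the same cancellation applied in one variable shows $U_{i,\infty}, U_{\infty, j} > 0$ for finite $i, j$, and direct computation gives $U_{\infty,\infty} = 0$.

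Substituting back into the Abel expansion yields $S \geq 0$. For the equality case, assume $S = 0$ and suppose for contradiction that $(n_k)$ is non-constant, so some $\Delta a_{i_0} > 0$. If $b_\infty > 0$ then the term $b_\infty\,\Delta a_{i_0}\, U_{i_0,\infty}$ is strictly positive, contradicting $S = 0$; if instead $b_\infty = 0$, then $\tilde n_\ell \to \infty$, so $(\tilde n_\ell)$ is also non-constant, some $\Delta b_{j_0} > 0$, and $\Delta a_{i_0}\Delta b_{j_0}\, U_{i_0, j_0} > 0$ gives the contradiction. Hence $(n_k)$ is constant and, symmetrically, so is $(\tilde n_\ell)$. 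Conversely, constant sequences make every $\Delta$-term vanish, leaving only $a_\infty b_\infty\, U_{\infty,\infty} = 0$, which completes the characterisation.
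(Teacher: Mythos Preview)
Your proof is correct and takes a genuinely different route from the paper's. The paper argues by contradiction through a ``flattening'' monotonicity: assuming $S\le 0$ with a non-constant $(n_k)$, it shows (Lemma~\ref{l2}) that removing the first jump in $(n_k)$ strictly decreases $S$, so after iterating and passing to the limit one reaches a constant pair with $S<0$, contradicting the direct computation $S=0$ of Lemma~\ref{l1}. Your argument instead linearises via Abel summation in the monotone weights $a_k=x^{-n_k}$, $b_\ell=y^{-\tilde n_\ell}$, reducing everything to the nonnegativity of the rectangular partial sums $U_{ij}=cP_i(y)P_j(x)-Q_i(y)Q_j(x)$; the factorisation $U_{ij}=Q_i(y)Q_j(x)\bigl(cR_i(y)R_j(x)-1\bigr)$ then reduces to the single-variable fact $R_m(z)>R_\infty(z)$, which your antisymmetry cancellation proves cleanly. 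The payoff of your approach is that it is constructive (one reads off explicit positive lower bounds for $S$ in terms of the increments $\Delta a_i,\Delta b_j$) and avoids the iterative limiting step; the paper's variational idea, on the other hand, makes the role of the jumps in $(n_k)$ very transparent and is phrased so that the multi-variable generalisation in the subsequent remark is immediate. Both approaches extend to that generalisation without difficulty. One small point worth making explicit in your write-up: the interchanges of summation in the Abel step are justified by the absolute convergence $\sum_{k,\ell}|u_{k\ell}|<\infty$, which in turn holds because $\sum k y^{-k}$ and $\sum \ell x^{-\ell}$ converge.
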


The proof of the above proposition is based on the following two lemmas:

\begin{lemma}\label{l1}
	If the sequences $(n_k)$ and $\tilde n_{\ell}$ are constant, then $S\bj{(n_k), (\tilde n_{\ell})}=0$.
\end{lemma}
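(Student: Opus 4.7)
The plan is to substitute the constant sequences directly and exploit the product structure of the resulting double sum. Since the lemma asserts a straightforward identity, the proof amounts to a short calculation using two well-known geometric series identities. The main (and only) obstacle is bookkeeping: one must recognize that once the sequences are constant, the dependence on $n_k$ and $\tilde n_\ell$ factors out as a harmless multiplicative constant, and the remaining double sum splits as a product.

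Concretely, suppose $n_k = n$ and $\tilde n_\ell = \tilde n$ for all $k, \ell \ge 1$. I would first pull out $1/(x^n y^{\tilde n})$ from every term of the double sum, reducing the problem to showing
\[
\sum_{k=1}^{\infty}\sum_{\ell=1}^{\infty}\frac{\frac{xy}{(x-1)(y-1)}-k\ell}{x^{\ell}y^{k}}=0.
\]
Next I would split this into two pieces. For the first piece, the double sum separates as a product of two geometric series:
\[
\sum_{k=1}^{\infty}\sum_{\ell=1}^{\infty}\frac{1}{x^{\ell}y^{k}}=\bigg(\sum_{\ell=1}^{\infty}\frac{1}{x^{\ell}}\bigg)\bigg(\sum_{k=1}^{\infty}\frac{1}{y^{k}}\bigg)=\frac{1}{(x-1)(y-1)}.
\]
For the second piece, the factor $k\ell$ likewise splits multiplicatively, and the standard identity $\sum_{m\ge 1} m z^{-m} = z/(z-1)^2$ (valid for $z>1$) yields
\[
\sum_{k=1}^{\infty}\sum_{\ell=1}^{\infty}\frac{k\ell}{x^{\ell}y^{k}}=\bigg(\sum_{\ell=1}^{\infty}\frac{\ell}{x^{\ell}}\bigg)\bigg(\sum_{k=1}^{\infty}\frac{k}{y^{k}}\bigg)=\frac{xy}{(x-1)^{2}(y-1)^{2}}.
\]

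Finally I would combine these two evaluations: the constant numerator $xy/((x-1)(y-1))$ multiplied by $1/((x-1)(y-1))$ gives exactly $xy/((x-1)^{2}(y-1)^{2})$, which cancels against the second piece. Restoring the overall factor $1/(x^n y^{\tilde n})$ leaves $S((n_k),(\tilde n_\ell))=0$, as required. No convergence issue arises because $x,y>1$ guarantees absolute convergence, so the interchange of summation implicit in the factorization is justified.
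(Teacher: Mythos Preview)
Your proof is correct and follows essentially the same approach as the paper: factor out $1/(x^{n_1}y^{\tilde n_1})$, split the remaining double sum into the constant-numerator piece and the $k\ell$ piece, evaluate each as a product of one-variable series using $\sum_{m\ge 1}1/t^m=1/(t-1)$ and $\sum_{m\ge 1}m/t^m=t/(t-1)^2$, and observe the cancellation. The paper's write-up abbreviates $z:=xy/((x-1)(y-1))$ and derives the second series identity explicitly rather than citing it, but the substance is identical.
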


\begin{proof}
	We recall for all $t>1$ the elementary relation
	\begin{equation*}
		\sum_{n=1}^{\infty}\frac{n}{t^n}
		=\sum_{n=1}^{\infty}\bj{\sum_{m=1}^n\frac{1}{t^n}}
		=\sum_{m=1}^{\infty}\bj{\sum_{n=m}^{\infty}\frac{1}{t^n}}
		=\sum_{m=1}^{\infty}\frac{1}{t^{m-1}(t-1)}
		=\frac{t}{(t-1)^2}.
	\end{equation*}
	
	Using this and setting $z:=\frac{xy}{(x-1)(y-1)}(>1)$ for brevity, we obtain that 
	\begin{align*}
		S\bj{(n_k), (\tilde n_{\ell})}&=\frac{1}{x^{n_1}y^{\tilde n_1}}\bj{z\sum_{k=1}^{\infty}\frac{1}{y^k}\sum_{\ell=1}^{\infty}\frac{1}{x^{\ell}}
			-\sum_{k=1}^{\infty}\frac{k}{y^k}\sum_{\ell=1}^{\infty}\frac{\ell}{x^{\ell}}}\\
		&=\frac{z}{x^{n_1}y^{\tilde n_1}}\bj{\frac{z}{(x-1)(y-1)}-\frac{yx}{(y-1)^2(x-1)^2}}=0.\qedhere
	\end{align*}
\end{proof}

\begin{lemma}\label{l2}
	Assume that
	\begin{equation}\label{1}
		S\bj{(n_k), (\tilde n_{\ell})}\le 0
	\end{equation} 
	for some non-decreasing sequences  $(n_k), (\tilde n_{\ell})$, and that $n_{k'}<n_{k'+1}$ for some $k'$. 
	Let us introduce a new non-decreasing sequence $(\hat n_k)$ by the formula
	\begin{equation*}
		\hat n_k:=
		\begin{cases}
			n_k&\text{if }k\le k',\\
			n_k-(n_{k'+1}-n_{k'})&\text{if }k>k'.
		\end{cases}
	\end{equation*}
	Then we have $\hat n_{k'+1}=\hat n_{k'}$, and
	\begin{equation*}
		S\bj{(\hat n_k), (\tilde n_{\ell})}<S\bj{(n_k), (\tilde n_{\ell})}.
	\end{equation*}
\end{lemma}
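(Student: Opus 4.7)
The plan is to write $S((\hat n_k),(\tilde n_\ell))-S((n_k),(\tilde n_\ell))$ as a strictly positive constant times a tail sum, and then, using the standing assumption $S((n_k),(\tilde n_\ell))\le 0$, to show that this tail sum is strictly negative. The combinatorial half is trivial: $\hat n_{k'+1}=n_{k'+1}-(n_{k'+1}-n_{k'})=n_{k'}=\hat n_{k'}$, and monotonicity of $(\hat n_k)$ follows from that of $(n_k)$. With $\Delta:=n_{k'+1}-n_{k'}>0$, the identity that drives everything is
\begin{equation*}
    S((\hat n_k),(\tilde n_\ell))-S((n_k),(\tilde n_\ell)) = (x^{\Delta}-1)\,T,\qquad T := \sum_{k>k'}\sum_{\ell\ge 1}\frac{z-k\ell}{x^{n_k+\ell}\,y^{k+\tilde n_\ell}},
\end{equation*}
where $z=\tfrac{xy}{(x-1)(y-1)}$. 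Since $x^{\Delta}-1>0$, the lemma reduces to proving $T<0$.

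To analyse $T$ I would carry out the inner $\ell$-sum. Setting $a_k:=x^{-n_k}$, $b_\ell:=y^{-\tilde n_\ell}$, $U:=\sum_\ell b_\ell/x^\ell$, $V:=\sum_\ell\ell b_\ell/x^\ell$, and
\begin{equation*}
    A:=\sum_{k>k'}\frac{a_k}{y^k},\quad B:=\sum_{k>k'}\frac{k\,a_k}{y^k},\quad A':=\sum_{k\le k'}\frac{a_k}{y^k},\quad B':=\sum_{k\le k'}\frac{k\,a_k}{y^k},
\end{equation*}
all of which are strictly positive, a short calculation gives $T = zU\,A - V\,B$, the analogous head contribution $H := \sum_{k\le k'}\sum_\ell (z-k\ell)/(x^{n_k+\ell}y^{k+\tilde n_\ell}) = zU\,A' - V\,B'$, and $S((n_k),(\tilde n_\ell)) = H+T = zU(A+A') - V(B+B')$. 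In these terms, $T<0$ is equivalent to $A/B<V/(zU)$, and the hypothesis $S\le 0$ is equivalent to $(A+A')/(B+B')\le V/(zU)$, so the whole problem becomes a comparison of three ratios.

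The bridge is a strict mediant inequality. A direct cross-multiplication gives
\begin{equation*}
    A'B - AB' = \sum_{i\le k'}\sum_{j>k'}\frac{a_ia_j(j-i)}{y^{i+j}} > 0,
\end{equation*}
since $j-i\ge 1$ and all $a_k>0$. Hence $A/B<A'/B'$ strictly, and the elementary mediant inequality yields $A/B<(A+A')/(B+B')$. Chaining with the hypothesis,
\begin{equation*}
    \frac{A}{B} < \frac{A+A'}{B+B'} \le \frac{V}{zU},
\end{equation*}
so $T = zUA - VB < 0$, which is what we needed. The main subtlety I anticipate is noticing that the hypothesis $S((n_k),(\tilde n_\ell))\le 0$ is really used: without it one can engineer non-decreasing $(n_k)$ for which $T>0$, so the proof cannot be a crude monotonicity estimate but must propagate the sign of $S$ through to $T$, which is exactly what the mediant chain achieves.
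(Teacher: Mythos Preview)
Your proof is correct. Both you and the paper make the same reduction: writing $S((\hat n_k),(\tilde n_\ell))-S((n_k),(\tilde n_\ell))=(x^{\Delta}-1)\,T$ with $T$ the tail sum over $k>k'$, so that the lemma becomes $T<0$. The difference is only in how $T<0$ is extracted from the hypothesis $S\le 0$. The paper argues by contradiction: if $T\ge 0$, then replacing $k$ by $k'$ in the factor $z-k\ell$ (which strictly increases each term) forces $\sum_\ell (z-k'\ell)x^{-\ell}y^{-\tilde n_\ell}>0$, hence $\sum_\ell (z-k\ell)x^{-\ell}y^{-\tilde n_\ell}>0$ for every $k\le k'$, making the head sum positive and $S>0$. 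Your route factors the bilinear structure as $T=zUA-VB$, $H=zUA'-VB'$, proves $A'B-AB'>0$ by the double-sum identity, and then chains the strict mediant inequality $A/B<(A+A')/(B+B')$ with the hypothesis $(A+A')/(B+B')\le V/(zU)$. Both arguments exploit exactly the same monotonicity (tail indices $j>k'$ exceed head indices $i\le k'$); the paper's version is slightly leaner in notation, while yours makes the role of $S\le 0$ as a ratio bound completely explicit and avoids contradiction.
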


\begin{proof}
	Consider the following identity:
	\begin{equation}\label{2}
		S\bj{(n_k), (\tilde n_{\ell})}
		=\sum_{k=1}^{k'}\frac{1}{x^{n_k}y^k}\sum_{\ell=1}^{\infty}\frac{z-k\ell}{x^{\ell}y^{\tilde n_{\ell}}}
		+\sum_{k=k'+1}^{\infty}\frac{1}{x^{n_k}y^k}\sum_{\ell=1}^{\infty}\frac{z-k\ell}{x^{\ell}y^{\tilde n_{\ell}}}.
	\end{equation}
	We claim that the last double sum is negative.
	For otherwise we would have
	\begin{equation*}
		0\le\sum_{k=k'+1}^{\infty}\frac{1}{x^{n_k}y^k}\sum_{\ell=1}^{\infty}\frac{z-k\ell}{x^{\ell}y^{\tilde n_{\ell}}}
		<\sum_{k=k'+1}^{\infty}\frac{1}{x^{n_k}y^k}\sum_{\ell=1}^{\infty}\frac{z-k'\ell}{x^{\ell}y^{\tilde n_{\ell}}},
	\end{equation*}
	implying
	\begin{equation*}
		\sum_{\ell=1}^{\infty}\frac{z-k'\ell}{x^{\ell}y^{\tilde n_{\ell}}}>0
	\end{equation*}
	and, more generally,
	\begin{equation*}
		\sum_{\ell=1}^{\infty}\frac{z-k\ell}{x^{\ell}y^{\tilde n_{\ell}}}>0
		\qtq{for}k=1,\ldots, k'.
	\end{equation*}
	Therefore the first double sum in \eqref{2}, and hence $S\bj{(n_k), (\tilde n_{\ell})}$ would be positive, contradicting \eqref{1}.
	
	Since the last double sum in \eqref{2} is negative and $x^{n_{k'+1}-n_{k'}}>1$ by our assumption $n_{k'+1}>n_{k'}$, we infer from \eqref{2} that
	\begin{align*}
		S\bj{(n_k), (\tilde n_{\ell})}
		&=\sum_{k=1}^{k'}\frac{1}{x^{n_k}y^k}\sum_{\ell=1}^{\infty}\frac{z-k\ell}{x^{\ell}y^{\tilde n_{\ell}}}
		+\sum_{k=k'+1}^{\infty}\frac{1}{x^{n_k}y^k}\sum_{\ell=1}^{\infty}\frac{z-k\ell}{x^{\ell}y^{\tilde n_{\ell}}}\\
		&>\sum_{k=1}^{k'}\frac{1}{x^{n_k}y^k}\sum_{\ell=1}^{\infty}\frac{z-k\ell}{x^{\ell}y^{\tilde n_{\ell}}}
		+\bj{x^{n_{k'+1}-n_{k'}}}\sum_{k=k'+1}^{\infty}\frac{1}{x^{n_k}y^k}\sum_{\ell=1}^{\infty}\frac{z-k\ell}{x^{\ell}y^{\tilde n_{\ell}}}\\
		&=\sum_{k=1}^{\infty}\frac{1}{x^{\hat n_k}y^k}\sum_{\ell=1}^{\infty}\frac{z-k\ell}{x^{\ell}y^{\tilde n_{\ell}}}\\
		&=S\bj{(\hat n_k), (\tilde n_{\ell})}.\qedhere
	\end{align*}
\end{proof}

\begin{proof}[Proof of Proposition \ref{p1}]
	In view of Lemma \ref{l1} we may assume that at least one of the sequences $(n_k)$ and $(\tilde n_{\ell})$ is non-constant.
	We have to prove that $S\bj{(n_k), (\tilde n_{\ell})}>0$.
	
	Assume on the contrary that $S\bj{(n_k), (\tilde n_{\ell})}\le 0$.
	We may assume by symmetry that the sequence $(n_k)$  is non-constant.
	Starting with the first index $k'$ such that $n_{k'+1}>n_{k'}$, by a finite or infinite number of successive applications of Lemma \ref{l2} we obtain that
	\begin{equation*}
		S\bj{(n_1), (\tilde n_{\ell})}<S\bj{(n_k), (\tilde n_{\ell})}\le 0,
	\end{equation*}
	where $(n_1)$ denotes the constant sequence $n_1, n_1, \ldots .$
	
	Since $S\bj{(n_1), (\tilde n_{\ell})}<0$, we may repeat the preceding reasoning by exchanging the role of the sequences $(n_k)$ and $(\tilde n_{\ell})$ to obtain that
	\begin{equation*}
		S\bj{(n_1), (\tilde n_1)}\le S\bj{(n_1), (\tilde n_{\ell})}<0.
	\end{equation*}
	(Here the first inequality is strict unless $(\tilde n_{\ell})$ is a constant sequence.)
	
	We have thus obtained the inequality $S\bj{(n_1), (\tilde n_1)}<0$, contradicting Lemma \ref{l1}.
\end{proof}

\begin{remark}
	The lemma  and its proof easily adapt to generalize the inequality to any finite number of real numbers $x_i>1$ and corresponding non-decreasing sequences $(n^i_k)$ of real numbers.
\end{remark}

\vspace{0.5em}
\section {Proof of Theorem \ref{t:main4} (v)}
\label{sec:4}
The main focus of this section is to prove $\dim_H\mathcal{U}_2=2$ and $\dim_{\operatorname{H}}\overline{\mathcal{U}_2} \setminus \mathcal{U}_2 \geq 1 $.
We start by recalling that the set $\mathcal{B}'$ is defined in \eqref{eq:AB}.
$$\mathcal{B}':=\set{(\mu, \alpha)\in\left(\Sigma_2^0\times\Sigma_2^1\right)\setminus \mathcal{C}':  \mu \preccurlyeq \sigma^m(\mu) \text{ and } \sigma^n(\alpha)\preccurlyeq\alpha \text{ for all } m,n\in \mathbb{N}_0}.$$
Subsequently, we will construct a subset of $\mathcal{U}'_2$.
For each $k\geq 2$ and $N\ge 2$, define
\[
{\mathcal{D}'}^N := \left\{
\begin{array} {lll}
	\alpha^N=(\alpha_i) : &\alpha_1\ldots \alpha_{2N} = 1^{2N-1}0 \text{ and } 0^N\prec \alpha_{kN+1}\ldots \alpha_{(k+1)\dot N}\prec 1^N \\
\end{array}
\right\}
\]
and define
\[
\overline{{\mathcal{D}'}^N} := \set{\mu^N=(\mu_i) : \mu_1\ldots \mu_{2N} = 0^{2N-1}1\text{ and } 0^N\prec \mu_{kN+1}\ldots \mu_{(k+1)\cdot N}\prec 1^N}.
\]
Next, we define
$${\mathcal{U}'_2}^N:=\overline{{\mathcal{D}'}^N}\times{\mathcal{D}'}^N\text{ and }\mathcal{U}_2^N:=\Phi^{-1}({\mathcal{U}'_2}^N).$$

In order to clarify the properties of ${\mathcal{U}'_2}^N$, we need the lemma below.
\begin{lemma}\label{l:42}
	Let $(\mu, \alpha)\in\mathcal{B}'$.
	\begin{enumerate}[\upshape (i)]
		\item If $\mu=\overline{\alpha}$ and  $\alpha\nearrow 1^\infty$, then $q_0(\mu, \alpha)= q_1(\mu, \alpha)\nearrow  2$.
		\item Let $\alpha$ be fixed. If $\mu\rightarrow \overline{\alpha}$ , then  $q_0(\mu, \alpha)\rightarrow  q_1(\mu, \alpha)$.
		\item If  $\alpha\nearrow 1^\infty$ and $\mu\rightarrow \overline{\alpha}$ then  $q_0(\mu, \alpha)\rightarrow  2 $ and $q_1(\mu, \alpha)\rightarrow 2$.
	\end{enumerate}
\end{lemma}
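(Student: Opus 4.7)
My overarching strategy is to exploit the symmetry induced by $\mu=\overline{\alpha}$, which collapses the defining double-base system to a single-base equation; Parts (ii) and (iii) then follow by continuity and iteration.

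For Part (i), the plan is a direct algebraic substitution. Setting $q_0=q_1=q$ in the two defining identities $\pi_{q_0,q_1}(\alpha)=q_0/q_1$ and $\tilde{\pi}_{q_0,q_1}(\overline{\alpha})=q_1/q_0$, every product $q_{i_1}\cdots q_{i_k}$ becomes $q^k$, and the identity $1-\overline{\alpha}_k=\alpha_k$ shows that both equations collapse to the same single-base relation $\sum_{k=1}^\infty\alpha_k/q^k=1$. Denoting its unique solution in $(1,2]$ by $q(\alpha)$, the uniqueness statement contained in Proposition \ref{l: bijective-map} yields $q_0(\overline{\alpha},\alpha)=q_1(\overline{\alpha},\alpha)=q(\alpha)$. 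The monotone convergence $q(\alpha)\nearrow 2$ as $\alpha\nearrow 1^\infty$ then follows from Proposition \ref{p:lexi of 1}, since $q(\alpha)$ is precisely $\phi^{-1}(\alpha)$ in the single-base setting and $\phi(2)=1^\infty$.

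For Part (ii), the plan is to apply Theorem \ref{t:main1}. One first verifies $(\overline{\alpha},\alpha)\in\mathcal{B}'$: under conjugation, $\overline{\alpha}\preccurlyeq\sigma^m(\overline{\alpha})$ is equivalent to the quasi-greedy inequality $\sigma^m(\alpha)\preccurlyeq\alpha$, and the remaining membership conditions (infiniteness of $\alpha$, co-infiniteness of $\overline{\alpha}$, avoidance of $\mathcal{C}'$) are automatic whenever $\alpha\prec 1^\infty$ is infinite and quasi-greedy. Continuity of $\Phi^{-1}$ at $(\overline{\alpha},\alpha)$ then forces both $q_0(\mu,\alpha)$ and $q_1(\mu,\alpha)$ to share the common limit $q(\alpha)$ as $\mu\to\overline{\alpha}$, so their difference tends to zero.

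For Part (iii), I combine (i) and (ii) as an iterated limit. Given $\epsilon>0$, I first use (i) to fix $\alpha$ so close to $1^\infty$ that $|q(\alpha)-2|<\epsilon/2$; then, holding this $\alpha$ fixed, I use (ii) to choose $\delta>0$ such that $d(\mu,\overline{\alpha})<\delta$ implies $|q_i(\mu,\alpha)-q(\alpha)|<\epsilon/2$ for $i=0,1$, and the triangle inequality completes the proof. The technical heart of the argument is the algebraic collapse in Part (i); Parts (ii)--(iii) are softer continuity arguments. One subtlety in (iii) is worth flagging: the convergence is intrinsically \emph{iterated} (first $\alpha$, then $\mu$ depending on $\alpha$) rather than joint, since an arbitrary path $(\mu^n,\alpha^n)\to(0^\infty,1^\infty)$ can drive $(q_0^n,q_1^n)$ to any point on the curve $\mathcal{C}=\{q_0+q_1=q_0q_1\}$ (for instance near $(3,3/2)$). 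It is precisely the conjugate coupling $\mu=\overline{\alpha}$, preserved through the inner limit, that isolates the symmetric point $(2,2)$.
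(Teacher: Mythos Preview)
Your proposal is correct and follows essentially the same approach as the paper: exploit the symmetry $\mu=\overline{\alpha}$ together with the uniqueness in Proposition~\ref{l: bijective-map} to force $q_0=q_1$, then invoke continuity of $\Phi^{-1}$ for (ii) and combine for (iii). Your use of Proposition~\ref{p:lexi of 1} to obtain $q(\alpha)\nearrow 2$ directly in the single-base setting is in fact slightly cleaner than the paper's appeal to continuity of $\Phi^{-1}$ at the boundary point $(0^\infty,1^\infty)$, and your remark on the iterated (rather than joint) nature of the limit in (iii) is a worthwhile clarification absent from the paper.
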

\begin{proof}
	(i) If $\mu=\overline{\alpha}$, we know from 	
	Proposition \ref{p:lexi of 1} that $(q_1, q_1)$ (or $(q_0, q_0)$) is the  solution of  $f_{\alpha}(q_0, q_1) =0$ and $\tilde{f}_{\overline{\alpha}}(q_0,q_1)=0$ .  Since the solution is unique by  Theorem \ref{t:main1}, we have  $q_0(\overline{\alpha}, \alpha)=q_1(\overline{\alpha}, \alpha)$.
	In particular, we know that $q_0(0^\infty, 1^\infty)=q_1(0^\infty, 1^\infty)=2$ when $\alpha\rightarrow 1^\infty$. This together with  the continuity of the map $\Phi^{-1}$ implies the result.
	\medskip
	
	(ii)  Using (i) and the fact that the map $\Phi^{-1}$  is continuous shown in Lemma \ref{p:inverse-map}, we may conclude our result.
	\medskip
	
	(iii)  It follows from (i) that  $q_0(\overline{\alpha},\alpha)=q_1(\overline{\alpha},\alpha)\nearrow 2$ when $\alpha\nearrow1^\infty$. This together with (ii) yields  $q_0(\mu, \alpha)\rightarrow  2 $ and $q_1(\mu, \alpha)\rightarrow 2$ when  $\alpha\nearrow 1^\infty$ and $\mu\rightarrow \overline{\alpha}$.
\end{proof}

The following lemma shows that ${\mathcal{U}'_2}^N\subseteq \mathcal{U}'_2$.

\begin{lemma}\label{l:45}
	If $(\mu^{N},\alpha^N)\in {\mathcal{U}'_2}^N$, then
	\begin{enumerate}[\upshape (i)]
		\item $\mu^{N}\prec\sigma^m(\mu^{N})\prec\alpha^N$, $\mu^{N}\prec\sigma^n(\alpha^N)\prec\alpha^N$ for all $m, n \in\mathbb{N}$.
		\item $\lim_{N\rightarrow \infty} \alpha^{N}=1^\infty$,
		$\lim_{N\rightarrow \infty} \mu^{N}=\overline{\alpha^N}$.
		\item  $\lim_{N\rightarrow \infty}q_0(\mu^{N}, \alpha^{N} )=2,\;\; \lim_{N\rightarrow \infty}q_1(\mu^{N}, \alpha^{N} ) = 2$.
		\item  If $q_1(\mu^{N},\alpha^N)\le q_0(\mu^{N},\alpha^N)$, then $q_1(\mu^{N},\alpha^N)<2$. If $q_0(\mu^{N},\alpha^N)\le q_1(\mu^{N},\alpha^N)$, then $q_0(\mu^{N},\alpha^N)<2$.
	\end{enumerate}
\end{lemma}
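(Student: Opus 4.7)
My plan is to prove the four parts in order, with (i) providing the combinatorial backbone on which (iii) and (iv) will rest, and (ii) being a direct metric computation. The key feature to exploit throughout is the block structure: $\alpha^N$ begins with the rigid prefix $1^{2N-1}0$ and is thereafter partitioned into blocks of length $N$ each strictly between $0^N$ and $1^N$ (so containing at least one $0$ and at least one $1$), and $\mu^N$ has the conjugate structure starting with $0^{2N-1}1$.

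For (i), the key combinatorial observation is that any window of length $2N-1$ in $\alpha^N$ starting at a position $n+1 \geq 2N+1$ must entirely contain at least one full block $B_k=\alpha_{kN+1}\cdots\alpha_{(k+1)N}$, regardless of its offset modulo $N$; consequently such a window contains both a $0$ and a $1$. For the small shifts $1 \leq n \leq 2N$, I will compare against the rigid prefix $1^{2N-1}0$ position by position. Combining these cases, $\sigma^n(\alpha^N)$ always contains both a $0$ and a $1$ within its first $2N-1$ coordinates for every $n \geq 1$. Since $\alpha^N$ is all-$1$ and $\mu^N$ is all-$0$ on those coordinates, locating the first disagreeing position yields $\mu^N \prec \sigma^n(\alpha^N) \prec \alpha^N$. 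The symmetric argument with $0$ and $1$ interchanged handles $\sigma^m(\mu^N)$. Part (ii) is then immediate from the metric \eqref{eq:distance}: $\alpha^N$ and $1^\infty$ agree on the first $2N-1$ coordinates, so $d(\alpha^N,1^\infty) \leq 2^{-(2N-1)} \to 0$, and $\mu^N$ and $\overline{\alpha^N}$ both begin with $0^{2N-1}1$, so $d(\mu^N,\overline{\alpha^N}) \leq 2^{-2N} \to 0$.

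Part (iii) then follows from Lemma \ref{l:42}(iii) applied to the convergences $\alpha^N \to 1^\infty$ and $\mu^N \to \overline{\alpha^N}$ furnished by (ii). For part (iv), by (i) the pair $(\mu^N,\alpha^N)$ lies in $\mathcal{U}'_2 \subset \mathcal{B}'$, so by Theorem \ref{t:main1} the image $(q_0(\mu^N,\alpha^N),q_1(\mu^N,\alpha^N)) = \Phi^{-1}(\mu^N,\alpha^N)$ lies in $\mathcal{B}$; hence $1/q_0 + 1/q_1 > 1$, and if both $q_0,q_1$ were at least $2$ the reciprocals would sum to at most $1$, a contradiction. Therefore $\min\{q_0(\mu^N,\alpha^N),q_1(\mu^N,\alpha^N)\} < 2$, which is the content of (iv). I expect the main obstacle to be the block-covering argument in (i), which requires a short but careful case analysis on the offset of the length-$(2N-1)$ window modulo $N$, together with a careful inspection of the boundary shifts $n = 2N-1, 2N$; the remaining parts are either direct metric calculations or immediate applications of Lemma \ref{l:42} and the definition of $\mathcal{B}$.
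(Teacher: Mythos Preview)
Your proposal is correct and follows essentially the same approach as the paper. For (i) the paper phrases the key observation as ``at most $2N-2$ consecutive zeros or ones appear in the tail,'' yielding $\mu^N \prec 0^{2N-2}10^\infty \prec \sigma^n(\mu^N) \prec 1^{2N-2}01^\infty \prec \alpha^N$, which is exactly your ``every length-$(2N-1)$ window contains both a $0$ and a $1$'' rewritten; parts (ii) and (iii) are handled identically. For (iv) the paper argues via $1 \le \pi_{q_0,q_1}(\alpha^N) = q_0/q_1 < 1/(q_1-1)$, but the inequality $q_0/q_1 < 1/(q_1-1)$ is precisely $q_0+q_1 > q_0q_1$, so your direct appeal to $(q_0,q_1) \in \mathcal{B}$ is the same argument stated more transparently.
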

\begin{proof}
	(i) We know from the definition of ${\mathcal{U}'_2}^N $, there are at most $2N-2$ consecutive zero digits (or one digits) appearing in $\alpha_{2N+i}\alpha_{2N+i+1}\cdots$ for some $i\ge 0$. Symmetrically, there are at most $2N-2$ consecutive zero digits (or one digits) appearing in $\mu_{2N+i}\mu_{2N+i+1}\cdots$ for some $i\ge 0$.
	For all $m, n\in \mathbb{N}$ and $N \geq 2$, we have
	$$\mu^{N}\prec 0^{2N-2} 10^\infty\prec \sigma^n(\mu^{N})\prec 1^{2N-2} 01^\infty \prec\alpha^N$$
	and
	$$\mu^{N}\prec 0^{2N-2} 10^\infty\prec\sigma^m(\alpha^{N})\prec 1^{2N-2} 01^\infty \prec \alpha^N.$$
	\medskip
	
	(ii) follows from the definitions of $\mu^{N}$ and $\alpha^N$.
	\medskip
	
	(iii) It follows from (ii) and Lemma \ref{l:42} that $q_0(\mu^{N},\alpha^N)\rightarrow 2$  and $q_1(\mu^{N},\alpha^N)\rightarrow 2$ as $N\rightarrow \infty$.
	\medskip
	
	(iv) If $q_1(\mu^{N},\alpha^N)\le q_0(\mu^{N},\alpha^N)$, then it follows that $1\le\pi_{q_0, q_1}(\alpha^N)<1/(q_1(\mu^{N},\alpha^N)-1)$ and $q_1(\mu^{N},\alpha^N)<2$.
	The other case is shown similarly by $1\le\tilde{\pi}_{q_0, q_1}(\mu^N)<1/(q_0(\mu^{N},\alpha^N)-1).$
\end{proof}
The following lemma \ref{l:continu} will play an important role in the proof of $\dim_H\mathcal{U}_2^N=2$.

\begin{lemma}\label{l:continu}
	Assume $(q_0, q_1), (q'_0, q'_1)\in \mathcal{U}_2^N$ with $q_0\neq q'_0$ and $q_1\neq q'_1$, and $N$ is sufficiently large. Let $C=(2+\varepsilon(N))^{-2N+3}$, where $\varepsilon(N)$ is an arbitrarily small number depending on $N$. If $$d_{\mathbb{R}^2}((q_0, q_1), (q'_0, q'_1))\le C (2+\varepsilon(N))^{-m}$$ for some positive integer $m$, then $\alpha_i(q_0, q_1)=\alpha_i(q'_0, q'_1)$ and $\mu_i(q_0, q_1)=\mu_i(q'_0, q'_1)$ for all $i=1,\cdots, m$.
\end{lemma}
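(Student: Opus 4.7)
The plan is to prove, by induction on $i \leq m$, that $\alpha_i(q_0, q_1) = \alpha_i(q'_0, q'_1)$ and $\mu_i(q_0, q_1) = \mu_i(q'_0, q'_1)$; I describe the argument for $\alpha$, since the case of $\mu$ is parallel (the quasi-lazy algorithm replacing the quasi-greedy one, and the threshold $1/(q_0(q_1-1))$ replacing $1/q_1$). The base case $i \leq 2N$ is immediate since every $\alpha \in {\mathcal{D}'}^N$ begins with $1^{2N-1}0$. For the inductive step, assume $\alpha_j(q_0, q_1) = \alpha_j(q'_0, q'_1) = w_j$ for $j < i$, and set $R_i(q_0, q_1) := \pi_{q_0, q_1}(\sigma^{i-1}(\alpha(q_0, q_1)))$, so that the quasi-greedy algorithm assigns $\alpha_i(q_0, q_1) = 1$ exactly when $R_i(q_0, q_1) > 1/q_1$. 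By Lemma \ref{l:45}(iii), we may assume $q_0, q_1, q'_0, q'_1 \in (1, 2 + \varepsilon(N)]$ once $N$ is large.

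The proof rests on two quantitative estimates, with $\delta := d_{\mathbb{R}^2}((q_0, q_1), (q'_0, q'_1))$. The first is a \textbf{gap estimate}: $|R_i(q_0, q_1) - 1/q_1| \geq c (2+\varepsilon(N))^{-2N}$ for an absolute $c > 0$. When $\alpha_i = 1$, $R_i - 1/q_1 = q_1^{-1} \pi_{q_0, q_1}(\sigma^i(\alpha))$; by the intermediate estimate $\sigma^i(\alpha) \succ 0^{2N-2} 1 0^\infty$ established in the proof of Lemma \ref{l:45}(i), some position $j \leq 2N-1$ of $\sigma^i(\alpha)$ carries a $1$ contributing $\geq (2+\varepsilon(N))^{-(2N-1)}$ to $\pi_{q_0, q_1}(\sigma^i(\alpha))$. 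When $\alpha_i = 0$ and $1/q_1 - R_i = \eta$, iterating $R_{k+1} = T_{q_{\alpha_k}, \alpha_k}(R_k)$ (Figure \ref{unique expansion}) yields $R_{i+k} \approx 1 - 2^k \eta$ for $(q_0, q_1)$ near $(2,2)$, so $\alpha_{i+k} = 1$ for every $k$ with $2^k \eta < 1/2$; the companion estimate $\sigma^i(\alpha) \prec 1^{2N-2} 0 1^\infty$ caps this run at $2N-2$, forcing $\eta \geq (2+\varepsilon(N))^{-2N}$ up to constants. The second is a \textbf{Lipschitz estimate}: $|R_i(q_0, q_1) - R_i(q'_0, q'_1)| \leq C_1 (2+\varepsilon(N))^{i-1} \delta$, proved by induction on $i$ starting from $R_1 = q_0/q_1$ and using the recursion $R_{k+1} = T_{q_{w_k}, w_k}(R_k)$, where each step contributes a multiplicative factor $q_{w_k} \leq 2 + \varepsilon(N)$ plus an additive $O(\delta)$ from the parameter dependence of $T_0$ and $T_1$.

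Combining: if $\alpha_i$ flips, $R_i(q_0, q_1)$ and $R_i(q'_0, q'_1)$ lie on opposite sides of $1/q_1$ and $1/q'_1$. Together with $|1/q_1 - 1/q'_1| \leq \delta$, the two estimates force
\[
C_1 (2+\varepsilon(N))^{i-1} \delta + \delta \geq c (2+\varepsilon(N))^{-2N},
\]
i.e., $\delta \geq c' (2+\varepsilon(N))^{-(2N + i - 1)}$. For $i \leq m$ this contradicts $\delta \leq C (2+\varepsilon(N))^{-m}$ with $C = (2 + \varepsilon(N))^{-(2N-3)}$, provided $N$ is large enough that $\varepsilon(N)$ absorbs $c'$ and $C_1$. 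The main obstacle lies in the gap estimate when $\alpha_i = 0$: one must carefully track how the small deficit $\eta = 1/q_1 - R_i$ propagates through successive applications of $T_{q_1, 1}$ near the fixed region $r_{q_0, q_1} \approx 1$, and then convert the run-length bound of at most $2N - 2$ consecutive $1$s into the quantitative lower bound $\eta \gtrsim (2+\varepsilon(N))^{-2N}$; handling the corresponding estimate for $\mu$ near the dual threshold $1/(q_0(q_1-1))$ is analogous, using the structure of $\overline{{\mathcal{D}'}^N}$.
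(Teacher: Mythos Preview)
Your approach is genuinely different from the paper's and is essentially sound, but you have over-complicated one step and under-specified the constants.

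\textbf{Comparison with the paper.} The paper proves the contrapositive via two technical lemmas (Lemmas~\ref{l:47} and~\ref{l:Dist}): using the identities $q_0 = q_1\,\pi_{q_0,q_1}(\alpha)$ and $q_1 = q_0\,\tilde\pi_{q_0,q_1}(\mu)$, it writes $q'_0-q_0$ and $q'_1-q_1$ directly as differences of the full series and bounds them from below by substituting extremal sequences compatible with ${\mathcal{D}'}^N$, then splits into four cases according to the signs of $q'_0-q_0$ and $q'_1-q_1$. Your orbit-tracking method (gap estimate for $R_i$ plus a Lipschitz estimate propagated through the maps $T_{q_{w_k},w_k}$) avoids that case analysis entirely and is conceptually cleaner, at the cost of slightly looser constants.

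\textbf{The unnecessary obstacle.} Your ``main obstacle'' --- the gap estimate when $\alpha_i=0$ --- is not needed. If $\alpha_i(q_0,q_1)\neq\alpha_i(q'_0,q'_1)$ then exactly one of the two equals~$1$; apply your (easy) gap estimate $R_i-1/q_1 \ge (2+\varepsilon(N))^{-2N}$ on \emph{that} side, combine with $|1/q_1-1/q'_1|\le\delta$ and the Lipschitz bound, and you already get the desired lower bound on~$\delta$. This is fortunate, because your sketched argument for the $\alpha_i=0$ gap is fragile: tracking the deficit $\eta$ through a run of $1$'s of length $k$ yields a lower bound involving $q_1^{k-1}(q_0q_1-q_0-q_1)/(q_1-1)+1/(q_1(q_1-1))$, and when $k=2N-2$ the first (negative) term can cancel the second, since for $(q_0,q_1)\in\mathcal U_2^N$ one only has $|q_0q_1-q_0-q_1|\lesssim q_1^{-(2N-2)}$.

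\textbf{Constants.} Your final step reads ``provided $N$ is large enough that $\varepsilon(N)$ absorbs $c'$ and $C_1$'', but $c'$ and $C_1$ are absolute $O(1)$ constants unaffected by $\varepsilon(N)\to 0$. What you actually obtain is $\delta \ge c''(2+\varepsilon(N))^{-(2N+i-1)}$ for an absolute $c''$, which contradicts $\delta \le (2+\varepsilon(N))^{-(2N-3+m)}$ only up to a fixed factor $(2+\varepsilon(N))^{O(1)}$. This means your $C$ is $c''(2+\varepsilon(N))^{-2N+O(1)}$ rather than exactly $(2+\varepsilon(N))^{-2N+3}$; that suffices for the Hausdorff-dimension application (Theorem~\ref{t:hausdorff dimension}), but you should state it honestly rather than claim the sharp exponent.
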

This lemma will be proved by the following two lemmas. We recall from Section \ref{sec:3} that $\tilde{\pi}_{q_0,q_1}(\mu)=q_1/q_0$.
\begin{lemma}\label{l:47} For sufficiently large  $N$, we have
	\begin{enumerate}[\upshape (i)]
		\item  If  $q'_1<q_1$, then
		$$\pi_{q'_0, q'_1}(1^{2N-2}0^\infty)-\pi_{q_0, q_1}(1^\infty)>0.$$
		\item If $q_0<q'_0$, then
		$$\tilde{\pi}_{q_0, q_1}(0^{2N-2}1^\infty)-\tilde{\pi}_{q'_0, q'_1}(0^\infty)>0.$$
		\item If $q_0<q'_0$ and $q_1<q'_1$, then
		$$\frac{\pi_{q_0, q_1}(1^{2N-1}0^\infty)}{q_0}-\frac{\pi_{q'_0, q'_1}(1^\infty)}{q'_0}>0\qtq{and}\frac{\tilde{\pi}_{q_0, q_1}(0^{2N-1}1^\infty)}{q_1}-\frac{\tilde{\pi}_{q'_0, q'_1}(0^\infty)}{q'_1}>0.$$
	\end{enumerate}
\end{lemma}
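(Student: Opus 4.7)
The plan is to reduce each of the three inequalities in~(i)--(iii) to a comparison between a fixed positive ``main term'' coming from the strict inequality in the hypothesis and an exponentially decaying ``tail term'', using closed-form geometric-series identities.

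For~(i), I would first expand
\[
\pi_{q'_0,q'_1}(1^{2N-2}0^\infty) = \sum_{k=1}^{2N-2}(q'_1)^{-k} = \frac{1-(q'_1)^{-(2N-2)}}{q'_1-1} \qtq{and} \pi_{q_0,q_1}(1^\infty) = \frac{1}{q_1-1},
\]
so that the desired inequality rearranges to
\[
\frac{q_1-q'_1}{(q'_1-1)(q_1-1)} > \frac{(q'_1)^{-(2N-2)}}{q'_1-1}.
\]
Under the hypothesis $q'_1<q_1$, the left-hand side is a strictly positive constant independent of $N$, while the right-hand side decays like $(q'_1)^{-2N}$; hence the inequality holds once $N$ is large enough. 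Part~(ii) then follows by the mirror argument applied to the dual identities $\tilde\pi_{q_0,q_1}(0^{2N-2}1^\infty)=(1-q_0^{-(2N-2)})/(q_0-1)$ and $\tilde\pi_{q'_0,q'_1}(0^\infty)=1/(q'_0-1)$, with the hypothesis $q_0<q'_0$ now playing the role of $q'_1<q_1$.

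For~(iii), I would expand the two closed forms and regroup as
\[
\frac{\pi_{q_0,q_1}(1^{2N-1}0^\infty)}{q_0} - \frac{\pi_{q'_0,q'_1}(1^\infty)}{q'_0} = \left[\frac{1}{q_0(q_1-1)} - \frac{1}{q'_0(q'_1-1)}\right] - \frac{q_1^{-(2N-1)}}{q_0(q_1-1)}.
\]
The joint hypothesis $q_0<q'_0$ and $q_1<q'_1$ forces $q_0(q_1-1)<q'_0(q'_1-1)$, so the bracketed main term is a strictly positive constant while the subtracted tail vanishes exponentially in $N$. The second inequality of~(iii) is obtained by swapping the roles of the two indices and using the dual identity for $\tilde\pi$.

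The main obstacle is purely bookkeeping: in each part one needs to verify that the strict inequality in the hypothesis produces a \emph{fixed} positive gap that dominates the exponentially small tail for $N$ large. No deeper structural use of $(q_0,q_1),(q'_0,q'_1)\in\mathcal{U}_2^N$ is required for the proof itself beyond $q_0,q_1,q'_0,q'_1>1$; the convergences $q_0,q_1,q'_0,q'_1\to 2$ furnished by Lemma~\ref{l:45}(iii) are only needed in the downstream application (Lemma~\ref{l:continu}), where they guarantee that the threshold $N_0$ beyond which the inequalities hold can be chosen uniformly in the relevant regime.
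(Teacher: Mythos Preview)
Your proposal is correct and follows essentially the same approach as the paper: both arguments reduce each inequality to a fixed positive ``main term'' (coming from the strict hypothesis $q'_1<q_1$, $q_0<q'_0$, etc.) minus an exponentially decaying tail. The only cosmetic difference is that you pass to closed-form geometric-series identities, whereas the paper writes the differences term-by-term as $\sum_{i=1}^{2N-2}\bigl((q'_1)^{-i}-q_1^{-i}\bigr)-q_1^{2-2N}/(q_1-1)$ and argues that the partial sum is positive and increasing in $N$ while the tail tends to zero; the content is identical.
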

\begin{proof}
	(i) We have
	\begin{equation}\label{e:62}
		\pi_{q'_0, q'_1}(1^{2N-2}0^\infty)-\pi_{q_0, q_1}(1^{2N-2}1^\infty)=\sum_{i=1}^{2N-2}(\frac{1}{{q'}^i_1}-\frac{1}{q^i_1})-\frac{1}{{q}_1^{2N-2}({q}_1-1)}>0
	\end{equation}
	The last inequality in \eqref{e:62} is true for sufficiently large $N$, since $q_1<q'_1$ means $\sum_{i=1}^{2N-2}({q'}^{-i}_1-q^{-i}_1)$ is positive and increasing in $N$, ${q}_1^{2-2N}/({q}_1-1)$ is decreasing in $N$ and $\lim _{N \rightarrow \infty}{q}_1^{2-2N}/({q}_1-1)=0$.
	
	\medskip
	(ii) Similarly, we have
	\begin{equation*}
		\begin{split}
			&\tilde{\pi}_{q_0, q_1}(0^{2N-2}1^\infty)-\tilde{\pi}_{q'_0, q'_1}(0^\infty)=\sum_{i=1}^{2N-2}(\frac{1}{q^i_0}-\frac{1}{{q'}^i_0})-\frac{1}{{q}_0^{2N-2}({q}_0-1)}>0\\
		\end{split}
	\end{equation*}
	
	\medskip
	(iii) By noting ${q_0}^i{q_1}^j<{q'_0}^i{q'_1}^j$ and  a similar argument as (i) , we get
	\begin{equation*}
		\begin{split}
			&\frac{\pi_{q_0, q_1}(1^{2N-1}0^\infty)}{q_0}-\frac{\pi_{q'_0, q'_1}(1^\infty)}{q'_0}
			=\sum_{i=1}^{2N-1}(\frac{1}{q_0q^i_1}-\frac{1}{q'_0{q'}^i_1})-\frac{1}{q'_0{q'}_1^{2N-1}({q'}_1-1)}>0,
		\end{split}
	\end{equation*}
	\begin{equation*}
		\begin{split}
			&\frac{\tilde{\pi}_{q_0, q_1}(0^{2N-1}1^\infty)}{q_1}-\frac{\tilde{\pi}_{q'_0, q'_1}(0^\infty)}{q'_1}
			=\sum_{i=1}^{2N-1}(\frac{1}{q_1q^i_0}-\frac{1}{{q'}_1{q'}^i_0})-\frac{1}{q'_1{q'}_0^{2N-1}({q'}_0-1)}>0.
		\end{split}
	\end{equation*}
\end{proof}

\begin{lemma}\label{l:Dist}
	Assume $(q_0, q_1), \; (q'_0, q'_1)\in \mathcal{U}_2^N$ and $N$ is sufficiently large.  Let  $m$ be the smallest index that $\mu_i(q_0, q_1)\neq\mu_i(q'_0, q'_1)$ and $n$ be the smallest index that $\alpha_i(q_0, q_1)\neq\alpha_i(q'_0, q'_1)$.
	\begin{enumerate}[\upshape (i)]
		\item If $q'_0>q_0$ and $q_1>q'_1$, then
		\begin{equation*}
			q'_0-q_0> \left\{
			\begin{array}{clrr}
				&(2+\varepsilon(N))(2+\varepsilon(N))^{-n} & \text{ if } \alpha(q_0, q_1)\prec\alpha(q'_0, q'_1),\\
				&(2+\varepsilon(N))^{-2N+3}(2+\varepsilon(N))^{-n}  & \text{ if } \alpha(q_0, q_1)\succ\alpha(q'_0, q'_1),\\
			\end{array}
			\right.
		\end{equation*}
		\begin{equation*}
			q_1-q'_1> \left\{
			\begin{array}{clrr}
				&(2+\varepsilon(N))(2+\varepsilon(N))^{-m} & \text{ if } \mu(q_0, q_1)\prec\mu(q'_0, q'_1),\\
				&(2+\varepsilon(N))^{-2N+3}(2+\varepsilon(N))^{-m}  & \text{ if } \mu(q_0, q_1)\succ\mu(q'_0, q'_1).\\
			\end{array}
			\right.
		\end{equation*}
		\item  If $q'_0>q_0$ and $q'_1>q_1$, then
		$$q'_1-q_1>(2+\varepsilon(N))^{-2N+3}(2+\varepsilon(N))^{-m}  \qtq{and}q'_0-q_0>(2+\varepsilon(N))^{-2N+3}(2+\varepsilon(N))^{-n} .$$
	\end{enumerate}
	Here $\varepsilon(N)\rightarrow 0\text{ as }N\rightarrow \infty$.
\end{lemma}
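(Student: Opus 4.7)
The strategy is to translate a lexicographic disagreement between the pairs $(\mu,\alpha)$ and $(\mu',\alpha')$ at the first indices $m$ and $n$ into a quantitative metric lower bound on the parameter distance. The bridge is furnished by the defining equations $\pi_{q_0,q_1}(\alpha(q_0,q_1)) = q_0/q_1$ and $\tilde{\pi}_{q_0,q_1}(\mu(q_0,q_1)) = q_1/q_0$: differences like $\pi_{q'_0,q'_1}(\alpha') - \pi_{q_0,q_1}(\alpha)$ can be reorganized as an ``expansion-change'' term at a fixed base plus a ``base-change'' term for a fixed expansion. The crucial structural input is that every sequence in $\mathcal{D}'^N$ (and $\overline{\mathcal{D}'^N}$) contains no run of $2N-1$ identical symbols after position $2N$, so that any tail is lexicographically squeezed between patterns of the form $0\,1^{2N-2}\,0^\infty$ and $1\,0^{2N-2}\,1^\infty$. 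This is exactly the setup for Lemma \ref{l:47}, and the uniform proximity $q_0,q_1 \to 2$ coming from Lemma \ref{l:45}(iii) is what makes the base $2+\varepsilon(N)$ appear in every estimate.

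\textbf{Case (ii)} ($q'_0>q_0$, $q'_1>q_1$) is the monotone regime. By Remark \ref{r:monotonic}(iii), $\alpha'\succ\alpha$ and $\mu'\prec\mu$, so the first-disagreement digits must satisfy $\alpha_n=0$, $\alpha'_n=1$ and $\mu_m=1$, $\mu'_m=0$. The run-length constraint gives $\alpha \preccurlyeq \alpha_1\cdots\alpha_{n-1}\,0\,1^{2N-2}\,0^\infty$ and $\alpha' \succcurlyeq \alpha_1\cdots\alpha_{n-1}\,1\,0^\infty$. Subtracting the two $\alpha$-equations, shifting past the common prefix of length $n-1$, and applying Lemma \ref{l:47}(iii) to the resulting tails produces a strictly positive lower bound of order $(2+\varepsilon(N))^{-2N+3}(2+\varepsilon(N))^{-n}$ on the expansion-change side, which forces the same lower bound on $q'_0-q_0$. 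An entirely parallel argument with the $\mu$-equations at index $m$ yields the bound for $q'_1-q_1$.

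\textbf{Case (i)} ($q'_0>q_0$, $q_1>q'_1$) is the anti-monotone regime, where neither lex order is forced and both sub-cases occur. I would split each inequality on the lex comparison of $\alpha,\alpha'$ (and of $\mu,\mu'$). When the lex comparison aligns with the ``direction'' imposed by the dominant base change, the leading post-disagreement term already dominates, so a mild application of Lemma \ref{l:47}(i) or (ii) isolates a clean $(2+\varepsilon(N))^{1-n}$ (respectively $(2+\varepsilon(N))^{1-m}$) contribution with no further loss. When the lex comparison is reversed, a tail of up to $2N-2$ repeated digits threatens to cancel the contribution from position $n$; Lemma \ref{l:47}(i)--(ii) still yield strict positivity, but only after paying a loss factor $(2+\varepsilon(N))^{-2N+3}$, which is precisely what shows up in the weaker bound.

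\textbf{Main obstacle.} The delicate point, concentrated in Case (i), is the signed bookkeeping: since $q_0$ increases while $q_1$ decreases, the ``expansion-change'' and ``base-change'' contributions can have unpredictable signs, and naive mean-value decompositions introduce cancellations that would wipe out the first-disagreement contribution. Lemma \ref{l:47} is precisely the tool that guarantees a quantitative, strictly positive remainder survives this accounting, and it is the single source of the $(2+\varepsilon(N))^{-2N+3}$ loss. Once the relevant terms are correctly isolated, everything else is a routine (if tedious) geometric-series manipulation, and the asymptotic $\varepsilon(N)\to 0$ follows transparently from Lemma \ref{l:45}(iii).
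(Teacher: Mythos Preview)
Your plan is correct and mirrors the paper's proof: it uses the same defining identities $\pi_{q_0,q_1}(\alpha)=q_0/q_1$, $\tilde\pi_{q_0,q_1}(\mu)=q_1/q_0$, the same run-length constraint from $\mathcal{D}'^N$, the same case split on the lexicographic order of $\alpha,\alpha'$ and $\mu,\mu'$, and the same appeals to Lemma~\ref{l:47} and Lemma~\ref{l:45}(iii), with the $(2+\varepsilon(N))^{-2N+3}$ loss arising exactly where you say. One small implementation difference: in Case~(ii) the paper passes to $1/q_1-1/q'_1=\pi_{q_0,q_1}(\alpha)/q_0-\pi_{q'_0,q'_1}(\alpha')/q'_0$ (so the $\alpha$-equation yields the bound on $q'_1-q_1$ and the $\mu$-equation the bound on $q'_0-q_0$, with indices swapped relative to the statement), because Lemma~\ref{l:47}(iii) is stated in precisely that $\pi/q_0$ form; your pairing can also be made to work but requires a slight reformulation of Lemma~\ref{l:47}(iii).
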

\begin{proof}
	For simplicity, we write $\mu=(\mu_i):=\mu(q_0, q_1)$, $\mu'=(\mu'_i):=\mu(q'_0, q'_1)$, $\alpha=(\alpha_i):=\alpha(q_0, q_1)$ and  $\alpha'=(\alpha'_i):=\alpha(q'_0, q'_1)$. It is clear that $(\mu, \alpha), (\mu', \alpha')\in {\mathcal{U}'_2}^N$.
	
	First we have  by  (iii) and (iv) of Lemma \ref{l:45} that there exists a positive number $\varepsilon(N)$ such that
	$q_0, q'_0, q_1, q'_1$ all belong to $(1, 2+\varepsilon(N)]$ and $\varepsilon(N)\rightarrow 0 \text{ as } N\rightarrow \infty.$
	
	(ia) If  $\alpha\prec\alpha'$ with $\alpha_n<\alpha'_n$,  we have
	\begin{equation*}
		\begin{split}
			q'_0-q_0&=q'_1\pi_{q'_0, q'_1}((\alpha'_i))-q_1\pi_{q_0, q_1}((\alpha_i))\\
			&=\pi_{q'_0, q'_1}(1^{2N-2}0\alpha_{2N+1}\cdots\alpha_{n-1}1\alpha'_{n+1}\cdots)-\pi_{q_0, q_1}(1^{2N-2}0\alpha_{2N+1}\cdots\alpha_{n-1}0\alpha_{n+1}\cdots)\\
			&>\pi_{q'_0, q'_1}(1^{2N-2}0^{n-2N}10^\infty)-\pi_{q_0, q_1}(1^\infty)\\
			&=\pi_{q'_0, q'_1}(1^{2N-2}0^\infty)-\pi_{q_0, q_1}(1^\infty)+\frac{1}{{q'}^{2N-1}_1{q'}^{n-2N}_0}>\frac{1}{{q'}^{2N-1}_1{q'}^{n-2N}_0}.\\
		\end{split}
	\end{equation*}	
	The last  inequality follows from (i) of Lemma \ref{l:47}.
	Hence,
	$$q'_0-q_0>\frac{1}{{q'}^{2N-1}_1{q'}^{n-2N}_0}\geq (2+\varepsilon(N))(2+\varepsilon(N))^{-n}.$$
	\medskip
	
	(ib) If $\alpha\succ\alpha'$ with  $\alpha_n>\alpha'_n$. Similary, we obtain
	\begin{equation*}
		\begin{split}
			q'_0-q_0&=q'_1\pi_{q'_0, q'_1}((\alpha'_i))-q_1\pi_{q_0, q_1}((\alpha_i))\\
			&=\pi_{q'_0, q'_1}(1^{2N-2}0\alpha_{2N+1}\cdots\alpha_{n-1}0\alpha'_{n+1}\cdots)-\pi_{q_0, q_1}(1^{2N-2}0\alpha_{2N+1}\cdots\alpha_{n-1}1\alpha_{n+1}\cdots)\\
			&>\pi_{q'_0, q'_1}(1^{2N-2}0\alpha_{2N+1}\cdots\alpha_{n-1}0^{N-1}0^{N-1}10^\infty)-\pi_{q_0, q_1}(1^\infty)\\
			&>\pi_{q'_0, q'_1}(1^{2N-2}0^\infty)-\pi_{q_0, q_1}(1^\infty)+\frac{1}{{q'}_1^{2N-1}{q'}_0^{2N-1}{{q'}}_{\alpha_{2N+1}}\cdots {q'}_{\alpha_{n-1}}}\\
			&>\frac{1}{{q'}_1^{2N-1}{q'}_0^{2N-1}{{q'}}_{\alpha_{2N+1}}\cdots {q'}_{\alpha_{n-1}}}\geq (2+\varepsilon(N))^{-2N+3}(2+\varepsilon(N))^{-n}.\\
		\end{split}
	\end{equation*}
	
	\medskip
	(ic) If $\mu\prec\mu'$ with $\mu_m<\mu'_m$. We have
	\begin{equation*}
		\begin{split}
			q_1-q'_1&=q_0\tilde{\pi}_{q_0, q_1}((\mu_i))-q'_0\tilde{\pi}_{q'_0, q'_1}((\mu'_i))\\
			&=\tilde{\pi}_{q_0, q_1}(0^{2N-2}1\mu_{2N+1}\cdots\mu_{m-1}0\mu_{m+1}\cdots)-\tilde{\pi}_{q'_0, q'_1}(0^{2N-2}1\mu_{2N+1}\cdots\mu_{m-1}1\mu'_{m+1}\cdots)\\
			&>\tilde{\pi}_{q_0, q_1}(0^{2N-2}1^{m-2N}01^\infty)-\tilde{\pi}_{q'_0, q'_1}(0^\infty)\\
			&=\tilde{\pi}_{q_0, q_1}(0^{2N-2}1^\infty)-\tilde{\pi}_{q'_0, q'_1}(0^\infty)+\frac{1}{q_0^{2N-1}q_1^{m-2N}}>\frac{1}{q_0^{2N-1}q_1^{m-2N}}.\\
		\end{split}
	\end{equation*}
	The last inequality follows from (ii) of Lemma \ref{l:47}.
	Hence,
	$$q'_1-q_1>\frac{1}{q_0^{2N-1}q_1^{m-2N}}\geq (2+\varepsilon(N))(2+\varepsilon(N))^{-m} .$$
	
	\medskip
	(id) If $\mu\succ\mu'$ with $\mu_m>\mu'_m$. Similarly, we have
	\begin{equation*}
		\begin{split}
			q_1-q'_1&=q_0\tilde{\pi}_{q_0, q_1}((\mu_i))-q'_0\tilde{\pi}_{q'_0, q'_1}((\mu'_i))\\
			&=\tilde{\pi}_{q_0, q_1}(0^{2N-2}1\mu_{2N+1}\cdots\mu_{m-1}1\mu_{m+1}\cdots)-\tilde{\pi}_{q'_0, q'_1}(0^{2N-2}1\mu_{2N+1}\cdots\mu_{m-1}0\mu'_{m+1}\cdots)\\
			&>\tilde{\pi}_{q_0, q_1}(0^{2N-2}1\mu_{2N+1}\cdots\mu_{m-1}1^{N-1}1^{N-1}01^\infty)-\tilde{\pi}_{q'_0, q'_1}(0^\infty)\\
			&>\tilde{\pi}_{q_0, q_1}(0^{2N-2}1^\infty)-\tilde{\pi}_{q'_0, q'_1}(0^\infty)+\frac{1}{q_1^{2N-1}{q}^{2N-1}_0{q}_{\mu_{2N+1}}\cdots q_{\mu_{m-1}}}\\
			&>\frac{1}{q_1^{2N-1}{q}^{2N-1}_0{q}_{\mu_{2N+1}}\cdots q_{\mu_{m-1}}}\geq (2+\varepsilon(N))^{-2N+3}(2+\varepsilon(N))^{-m}.\\	
		\end{split}
	\end{equation*}
	
	(ii) It remains to prove the case $q'_0>q_0$ and $q'_1>q_1$.
	By (iii) of Remark \ref{r:monotonic} \color{black}, we have $\alpha\prec\alpha'$ and $\mu\succ\mu'$. Hence
	\begin{equation*}
		\begin{split}
			\frac{1}{q_1}-\frac{1}{q'_1}&=\frac{\pi_{q_0, q_1}((\alpha_i))}{q_0}-\frac{\pi_{q'_0, q'_1}((\alpha'_i))}{q'_0}\\
			&>\frac{\pi_{q_0, q_1}(1^{2N-1}0\alpha_{2N+1}\cdots\alpha_{n-1}0^{N-1}0^{N-1}10^{\infty})}{q_0}-\frac{\pi_{q'_0, q'_1}(1^{\infty})}{q'_0}\\
			&>\frac{\pi_{q_0, q_1}(1^{2N-1}0^{\infty})}{q_0}-\frac{\pi_{q'_0, q'_1}(1^{\infty})}{q'_0}+\frac{1}{q_0^{2N}q_1^{2N}{q}_{\alpha_{2N+1}}\cdots{q}_{\alpha_{n-1}}}\\
			&>\frac{1}{q_0^{2N}q_1^{2N}{q}_{\alpha_{2N+1}}\cdots{q}_{\alpha_{n-1}}}.\\
		\end{split}
	\end{equation*}
	
	The last inequality follows from (iii) of Lemma \ref{l:47}.
	Hence,
	$$q'_1-q_1>\frac{q'_1}{q_0^{2N}q_1^{2N-1}{q}_{\alpha_{2N+1}}\cdots{q}_{\alpha_{n-1}}}>(2+\varepsilon(N))^{-2N+3}(2+\varepsilon(N))^{-n}.$$
	
	Similarly, using (iii) of Lemma \ref{l:47}, we obtain
	\begin{equation*}
		\begin{split}
			\frac{1}{q_0}-\frac{1}{q'_0}&=\frac{\tilde{\pi}_{q_0, q_1}((\mu_i))}{q_1}-\frac{\tilde{\pi}_{q'_0, q'_1}((\mu'_i))}{q'_1}\\
			&>\frac{\tilde{\pi}_{q_0, q_1}(0^{2N-1}1\mu_{2N+1}\cdots\mu_{m-1}1^{N-1}1^{N-1}01^\infty)}{q_1}-\frac{\tilde{\pi}_{q'_0, q'_1}(0^\infty)}{q'_1}\\
			&>\frac{\tilde{\pi}_{q_0, q_1}(0^{2N-1}1^\infty)}{q_1}-\frac{\tilde{\pi}_{q'_0, q'_1}(0^\infty)}{q'_1}+\frac{1}{q_0^{2N}q_1^{2N}{q}_{\mu_{2N+1}}\cdots{q}_{\mu_{m-1}}}\\
			&>\frac{1}{q_0^{2N}q_1^{2N}{q}_{\mu_{2N+1}}\cdots{q}_{\mu_{m-1}}}.\\
		\end{split}
	\end{equation*}
	Then we have
	$$q'_0-q_0>(2+\varepsilon(N))^{-2N+3}(2+\varepsilon(N))^{-m}.$$	
	\color{black}
\end{proof}
Denote by $B_n({\mathcal{U}'_2}^N)$ the set of different blocks of $n$ letters  appearing in the sequence of ${\mathcal{U}'}_2^N$, by $B_{i, j}({\mathcal{U}'_2}^N)$ the set of words $c_i\cdots c_j$ of the sequence $(c_i)$ of ${\mathcal{U}'_2}^N$, and by $|A|$ the cardinality of the set $A$.
We know from the definition of ${\mathcal{U}'_2}^N$ that
\begin{equation}\label{e:number}
	\begin{split}
		&|B_{nN}({\mathcal{U}'_2}^N)|=((2^{N}-2)^{n-2})^2 \text{ for all } n\ge 2,\\
		&|B_{kN+1,\; nN}({\mathcal{U}'_2}^N)|=((2^{N}-2)^{n-k} )^2 \text{ for all } 2\le k\le n.
	\end{split}
\end{equation}

The proof of Theorem \ref{t:hausdorff dimension} is analogous to that of \cite[Theorem 1.6 (ii) ]{KomKonLi2015}.
\begin{theorem}\label{t:hausdorff dimension}
	$\dim_H \mathcal{U}_2=2$ and  $\dim_{\operatorname{H}}\overline{\mathcal{U}_2} \setminus \mathcal{U}_2 \ge 1 $.
\end{theorem}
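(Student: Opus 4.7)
The plan is to establish both lower bounds via the mass distribution principle applied to carefully chosen product measures on the approximating sets $\mathcal{U}_2^N$, following the blueprint of \cite[Theorem 1.6(ii)]{KomKonLi2015}. The upper bound $\dim_{\operatorname{H}}\mathcal{U}_2\le 2$ is automatic since $\mathcal{U}_2\subseteq\mathbb{R}^2$, so I focus on the lower bounds.

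For the first assertion, since $\mathcal{U}_2^N\subseteq\mathcal{U}_2$ by Lemma \ref{l:45}(i), it suffices to prove $\dim_{\operatorname{H}}\mathcal{U}_2^N\to 2$ as $N\to\infty$. I would put the Bernoulli product measure $\nu_N$ on $\overline{\mathcal{D}'}^N\times\mathcal{D}'^N$ that assigns mass $(2^N-2)^{-2}$ to each admissible matched pair of $N$-blocks, independently across blocks, and push it forward by $\Phi^{-1}$ to a probability measure $m_N$ supported on $\mathcal{U}_2^N$. By \eqref{e:number}, any cylinder of prefix length $nN$ has $\nu_N$-mass $(2^N-2)^{-2(n-2)}$. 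For any ball $B\subset(\mathbb{R}^2,d_{\mathbb{R}^2})$ of radius $r\le C(2+\varepsilon(N))^{-m}$, Lemma \ref{l:continu} forces $B\cap\mathcal{U}_2^N$ into a single cylinder of prefix length $m$, giving
\[
m_N(B)\ \le\ (2^N-2)^{-2(\lfloor m/N\rfloor-2)}\ \le\ K_{N,s}\,r^{s}
\]
for every $s<s_N:=\dfrac{2\log(2^N-2)}{N\log(2+\varepsilon(N))}$ and a suitable constant $K_{N,s}$. The mass distribution principle then yields $\dim_{\operatorname{H}}\mathcal{U}_2^N\ge s_N$, and $s_N\to 2$ as $N\to\infty$ (since $\varepsilon(N)\to 0$ by Lemma \ref{l:45}(iii)).

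For the second assertion, I would mimic the above with one coordinate frozen. Using Theorem \ref{t:main3}(i), (iii) to pass back to the symbolic setting, it suffices to build a subset of $\overline{\mathcal{U}'_2}\setminus\mathcal{U}'_2$ whose $\Phi^{-1}$-image has Hausdorff dimension at least $1$. By Theorem \ref{t:main2}(ii)--(iii), a pair $(\mu,\alpha)$ lies there precisely when $(\mu,\alpha)\in\mathcal{V}'_2\setminus\mathcal{C}'$, at least one of the weak inequalities defining $\mathcal{V}'_2$ is an equality, and there do not simultaneously exist $m,n$ with $\sigma^m(\mu)=\alpha$ and $\sigma^n(\alpha)=\mu$. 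The plan is to fix a suitable periodic $\alpha^*$ (so that $\sigma^{p}(\alpha^*)=\alpha^*$ for its period $p$, immediately delivering a non-strict inequality and ruling out $(\mu,\alpha^*)\in\mathcal{U}'_2$) and vary $\mu$ in a Cantor-type family $F_N\subset\overline{\mathcal{D}'}^N$ chosen so that $\mu\preccurlyeq\sigma^n(\alpha^*)$ holds for all $n$. Because every $N$-block of $\mu\in F_N$ differs from $1^N$, the sequence $\mu$ contains at most $2N-2$ consecutive $1$'s, so choosing $\alpha^*$ with longer runs of $1$'s in its period ensures $\sigma^m(\mu)\neq\alpha^*$ for all $m$, securing membership in $\overline{\mathcal{U}'_2}$ rather than $\mathcal{V}'_2\setminus\overline{\mathcal{U}'_2}$. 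An entirely analogous mass-distribution argument on the single free coordinate $\mu$ then gives
\[
\dim_{\operatorname{H}}\Phi^{-1}(F_N\times\set{\alpha^*})\ \ge\ \frac{\log(2^N-2)}{N\log(2+\varepsilon(N))}\ \longrightarrow\ 1\quad\text{as}\ N\to\infty.
\]

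The main obstacle is the combinatorial design underlying Part~2: the fixed sequence $\alpha^*$ must simultaneously be genuinely periodic (to supply the required equality), compatible with the block structure of $\mathcal{D}'^N$, and possess features absent from every shift of $\mu\in F_N$ (so that $\sigma^m(\mu)\neq\alpha^*$ is guaranteed). Once such an $\alpha^*$ and a family $F_N$ are in place with $F_N$ still of cardinality $(2^N-2)^{n-2}$ on prefixes of length $nN$, the distance estimates of Lemma \ref{l:Dist}---applied with $\alpha^*$ held fixed so that only the $\mu$-dependent terms contribute---transfer verbatim and the mass-distribution estimate of Part~1 adapts by replacing the factor $2$ in the exponent by $1$.
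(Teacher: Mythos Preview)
Your proposal is correct and follows essentially the same route as the paper. For Part~1 both arguments rest on the approximating sets $\mathcal{U}_2^N$ and the key distance estimate Lemma~\ref{l:continu}; you package this via the mass distribution principle with a pushed-forward Bernoulli measure, whereas the paper runs the dual covering/counting argument directly (bounding $\sum_j|I_j|^{s}$ from below using \eqref{e:number}), and both yield the identical threshold $s_N=\tfrac{2\log(2^N-2)}{N\log(2+\varepsilon(N))}\to 2$. For Part~2 your construction is the mirror image of the paper's: you freeze a periodic $\alpha^*$ and let $\mu$ range over $\overline{\mathcal{D}'}^N$, while the paper sets $\mathcal{E}'^N:=\{(0^{2N-1}1)^\infty\}\times\mathcal{D}'^N$, freezing the periodic $\mu=(0^{2N-1}1)^\infty$ and varying $\alpha$; in both cases the periodicity of the frozen coordinate forces an equality in the $\mathcal{V}'_2$ inequalities (hence exits $\mathcal{U}'_2$), while the disparity in maximal run-lengths blocks $\sigma^m(\mu)=\alpha$, securing membership in $\overline{\mathcal{U}'_2}$. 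Note that neither the paper's $(0^{2N-1}1)^\infty$ nor a natural choice like $\alpha^*=(1^{2N-1}0)^\infty$ actually lies in $\overline{\mathcal{D}'}^N$ (resp.\ $\mathcal{D}'^N$), so the one-coordinate distance estimate is not literally a specialization of Lemma~\ref{l:Dist}; the paper handles this with the same ``similar argument'' hand-wave you make, and the adaptation is indeed routine.
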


\begin{proof}
	Since  $\mathcal{U}_{2}^N\subseteq \mathcal{U}_2 $ , it suffices to prove $\dim_H \mathcal{U}_2^N=2$.
	By Lemma \ref{l:continu}, consider a finite cover $\cup I_j$ of $\mathcal{U}_2^N$ by squares $I_j$ of diameter $|I_j|\leq C (2+\varepsilon(N))^{-N}$. For each positive integer $k$, we denote $J_k$ as  the set of indices $j$ that satisfy:
	\begin{equation}\label{e:hausdorff dimension 1}
		C(2+\varepsilon(N))^{-(k+1)N}<|I_j|\le C(2+\varepsilon(N))^{-kN}.
	\end{equation}
	It is clear that $J_k=\emptyset$ for all $k\geq n$,
	when $n$ is sufficiently large and  satisfies $C (2+\varepsilon(N))^{-nN}<|I_j|$ for all $j$.
	
	If $j\in J_k$ and $(q_0,q_1),(q_0',q_1')\in \mathcal{U}_2^N\cap I_j$, we can deduce from Lemma \ref{l:continu} that the first $kN$ letters of $\alpha(q_0, q_1)$ and $\alpha(q'_0, q'_1)$ are the same, so are $\mu(q_0, q_1)$ and $\mu(q'_0, q'_1)$.
	Therefore,  at most $|B_{kN+1,nN}({\mathcal{U}'_2}^N)|$ elements of $B_{nN}({\mathcal{U}'_2}^N)$ may occur for the bases $(q_0,q_1)\in \mathcal{U}_2^N\cap I_j$. Hence  $$\left|B_{n N}({\mathcal{U}'_2}^N)\right|\leq   \sum_k \sum_{j\in J_k} \left|B_{k N+1, n N}\left({\mathcal{U}'_2}^N\right)\right|.$$
	By using \eqref{e:number} , we have
	$$((2^{N}-2)^{n-2})^2\leq \sum_k \sum_{j\in J_k} ((2^{N}-2)^{n-k} )^2.$$
	It may be simplified to
	$$ \left(2^{N}-2\right)^{-4} \leq \sum_k \sum_{j\in J_k} \left(2^{N}-2\right)^{-2k}.$$
	Set $(2^{N}-2)^2=2^{\tau N}$.  $\tau:=\tau(N)\in(1,2)$  and  $\tau(N)\rightarrow 2$ when $N\rightarrow \infty.$
	The preceding inequality  may be rewritten  in the form
	\begin{equation}\label{e:hausdorff dimension 2}
		2^{-3\tau N} \leq \sum_{k}\sum_{j\in J_k} 2^{-(k+1)\tau N}.
	\end{equation}	
	The left inequality of \eqref{e:hausdorff dimension 1} means $(2+\varepsilon(N))^{-(k+1)N}<C^{-1}|I_j|$, which is equivalent to
	$$(2^{-(k+1)N})^{\log_2{(2+\varepsilon(N))}}<C^{-1}|I_j|$$
	and then we obtain $$2^{-(k+1)\tau N}<(C^{-1}|I_j|)^{\tau\log_{2+\varepsilon(N)}2}.$$	
	Applying \eqref{e:hausdorff dimension 2}, we have
	$$
	2^{-3\tau N} < \sum_{k}\sum_{j\in J_k}(C^{-1}|I_j|)^{\tau\log_{2+\varepsilon(N)}{2}},
	$$
	or equivalently
	$$\sum_{j}|I_j|^{\tau \log_{2+\varepsilon(N)}{2}} >	2^{-3\tau N}C^{\tau \log_{2+\varepsilon(N)}{2}}.$$
	Since the right side is positive and depends only on $N$, we conclude that 
	$$\dim_H(\mathcal{U}_2^N)\geq \tau \log_{2+\varepsilon(N)}{2}\qtq{with}\tau \log_{2+\varepsilon(N)}{2}\rightarrow2\qtq{as}N \rightarrow \infty.$$
	Now we need to show that $\dim_H \overline{\mathcal{U}_2}\setminus\mathcal{U}_2\ge 1$. Set $${\mathcal{E}'}^N:=\{(0^{2N-1}1)^\infty\}\times{\mathcal{D}'}^N.$$
	Thus, we have ${\mathcal{E}'}^N\subseteq \overline{\mathcal{U}'_2}\setminus\mathcal{U}'_2$.  This follows from the definition of ${\mathcal{E}'}^N$ and Lemma \ref{l:U'} (below).
	Hence $\Phi^{-1}({\mathcal{E}'}^N)\subseteq  \overline{\mathcal{U}_2}\setminus\mathcal{U}_2$. Using a similar argument as above, we may show that
	$\dim_H \Phi^{-1}({\mathcal{E}'}^N)\ge 1$, which means  $\dim_H \overline{\mathcal{U}_2}\setminus\mathcal{U}_2\ge 1$.
\end{proof}
\section{Proof of  Theorem \ref{t:main2} (i)-(vi) }
\label{sec:5}
In this section, we shall give the \color{black} main \color{black} proof of Theorem \ref{t:main2}. For this endeavor, we first recall  some results from \cite{KalKonLanLi2020,DeVKomLor2016}.
\begin{lemma} \cite[Theorem 2.1]{DeVKomLor2016}\label{l:SE} Let $(\mu_i)\in \Sigma_2^0$ and $(\alpha_i)\in \Sigma_2^1$. Assume $(x_i)\in\Sigma_2$.
	\begin{enumerate}[\upshape(i)]
		\item 	If $\sigma^j((x_i))\succ(\mu_i)$ for all $j\ge 1$. Then there exists a sequence $1<n_{1}<n_{2}<\cdots$ such that for each $i\geq1$,
		\begin{equation*}\label{e:ls_n}
			x_{n_i}=1 \text{ and }	x_{k+1}\cdots x_{n_{i}}\succ\mu_{1}\cdots\mu_{n_{i}-k}\text{ if } 1\leq k<n_{i}.
		\end{equation*}	
		\item 	If $\sigma^j((x_i))\prec(\alpha_i)$ for all $j\ge 1$. Then there exists a sequence $1<m_{1}<m_{2}<\cdots$ such that for each $i\geq1$,
		$$x_{m_i}=0 \text{ and }	x_{k+1}\cdots x_{m_{i}}\prec\alpha_{1}\cdots\alpha_{m_{i}-k}\text{ if } 1\leq k<m_{i}.$$
	\end{enumerate}
\end{lemma}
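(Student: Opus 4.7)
Proof proposal. I would prove (i) directly and deduce (ii) by applying (i) to the conjugate sequences $(1 - x_i)$ and $(1 - \alpha_i) \in \Sigma_2^0$, which swaps the roles of $0$ and $1$, turns $\prec$ into $\succ$, and converts the desired conclusion for $\alpha$ into the one for $\mu$.

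For (i), since $\mu_1 = 0$, the hypothesis $\sigma^j((x_i)) \succ (\mu_i)$ for every $j \ge 1$ assigns to each such $j$ a well-defined smallest index $p_j \ge 1$ with
\[
x_{j+r} = \mu_r \ \text{for}\ 1 \le r < p_j, \quad x_{j + p_j} = 1, \quad \mu_{p_j} = 0.
\]
Setting $P(j) := j + p_j$ gives $x_{P(j)} = 1$. I would first establish the characterization: a position $n \ge 2$ satisfies the lemma's condition if and only if $x_n = 1$ and $P(k) \le n$ for all $1 \le k < n$. This is a short case analysis: the strict inequality $(x_{k+1} \cdots x_n) \succ (\mu_1 \cdots \mu_{n-k})$ is produced precisely by the surplus at position $p_k$, which is visible in the first $n - k$ coordinates iff $P(k) \le n$; the value $x_n = 1$ follows from the case $k = n - 1$, and is equivalent to $P(n-1) = n$. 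Writing $M(j) := \max_{1 \le k \le j} P(k)$, the good positions are thus exactly those $n \ge 2$ for which $M(n-1) = n$.

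The problem reduces to showing $M(n-1) = n$ infinitely often. Let $a(j) := M(j) - j - 1 \ge 0$. Since $M$ is non-decreasing, $a(j) \ge a(j-1) - 1$, so $a$ can drop by at most one per step. If $a$ reaches $0$ infinitely often we are done. Otherwise $a(j) \ge 1$ for all $j \ge J_0$, which forces $M$ to make infinitely many jumps $j_1 < j_2 < \cdots$ with $p_{j_i} \ge 2$ and (a short computation) $j_{i+1} \le P(j_i) - 2$. Cross-comparing the agreement blocks $x_{j_i + r} = \mu_r$ and $x_{j_{i+1} + r} = \mu_r$ between two successive jumps yields a partial periodicity $\mu_{s_i + r} = \mu_r$ on $1 \le r \le p_{j_i} - s_i - 1$ (where $s_i := j_{i+1} - j_i$), together with the endpoint conditions $\mu_{p_{j_i} - s_i} = 1$ and $\mu_{p_{j_i}} = 0$. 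A Fine--Wilf-type argument applied along a suitable subsequence of these jumps should then produce incompatible constraints on a single finite prefix of $(\mu_i)$, yielding the contradiction.

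I expect the main obstacle to be this last combinatorial step: converting the accumulating partial periodicities into an outright contradiction, since the hypothesis $\sigma^j((x_i)) \succ (\mu_i)$ supplies only pointwise strict inequalities with no uniform control on the $p_{j_i}$ or on the periods $s_i$. The cleanest route I see is to exhibit a pair of jumps $j_i, j_{i'}$ for which the periodic extension would force $\mu_{p_{j_i}} = \mu_{p_{j_i} - s_i} = 1$, directly contradicting the defining property $\mu_{p_{j_i}} = 0$.
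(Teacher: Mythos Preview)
The paper does not give a proof of this lemma at all: it is quoted verbatim from \cite[Theorem~2.1]{DeVKomLor2016}, so there is nothing to compare your argument against.

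More importantly, the statement as copied here is missing a hypothesis, and as written it is \emph{false}. Take $\mu=0110^\infty$ and $x=(011)^\infty$. Then $\sigma^j(x)\succ\mu$ for every $j\ge 1$ (check the three residues of $j\bmod 3$), yet the only indices $n>1$ satisfying $x_{k+1}\cdots x_n\succ\mu_1\cdots\mu_{n-k}$ for all $1\le k<n$ are $n=2,3$: for $n\ge 4$ the case $k=n-3$ forces $x_{n-2}x_{n-1}x_n=111$, which never occurs in $(011)^\infty$. Hence no infinite sequence $n_1<n_2<\cdots$ exists, and your projected Fine--Wilf contradiction cannot materialise --- there is nothing to contradict.

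What is missing is the standing assumption $\sigma^j(\mu)\succcurlyeq\mu$ for all $j\ge 0$ (and symmetrically $\sigma^j(\alpha)\preccurlyeq\alpha$), which is exactly the setting of the cited theorem and of every application in the present paper (all uses occur with $(\mu,\alpha)\in\mathcal V_2'$). Under that hypothesis the proof is short and does not need any of the machinery you set up: for each $j$ let $p_j$ be, as you defined, the first index with $x_{j+p_j}=1>\mu_{p_j}=0$, and set $n_{i+1}:=n_i+p_{n_i}$ with $n_0:=1$. For $n:=n_{i+1}$ and $n_i\le k<n$ one has $x_{k+1}\cdots x_{n-1}=\mu_{k-n_i+1}\cdots\mu_{p_{n_i}-1}$; the minimality of $\mu$ gives $\mu_{k-n_i+1}\cdots\mu_{p_{n_i}-1}\succcurlyeq\mu_1\cdots\mu_{n-k-1}$, and equality forces $\mu_{n-k}\le\mu_{p_{n_i}}=0$, so $x_{k+1}\cdots x_n\succ\mu_1\cdots\mu_{n-k}$. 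For $1\le k<n_i$ the strict inequality at stage $i$ already decides the comparison. Thus each $n_i$ works and the sequence is infinite. Your reduction of (ii) to (i) by conjugation is correct.
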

\begin{lemma}\cite[Lemma 3.2]{KalKonLanLi2020}\label{l:periodic_PR_LN}
	Assume $\mu=(\mu_1\cdots\mu_m)^\infty$ and $m$ is  minimal with $\sigma^j(\mu)\succcurlyeq\mu$ for all $j\ge 0$. Then
	$$\mu_{i+1}\cdots\mu_{m}\succ\mu_1\cdots\mu_{m-i}\text { for all } 0<i<m;$$
	Assume $\alpha=(\alpha_1\cdots\alpha_n)^\infty$ and  $n$ is  minimal  with $\sigma^j(\alpha)\preccurlyeq\alpha$   for all $j\ge 0$ Then  $$\alpha_{i+1}\cdots\alpha_{n}\prec\alpha_1\cdots\alpha_{n-i} \text { for all } 0<i<n.$$
\end{lemma}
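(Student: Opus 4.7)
The plan is to treat each of the two statements by a direct lexicographic comparison combined with the minimality of the period. I will present the argument for $\mu$; the statement for $\alpha$ follows either by an entirely symmetric argument or by applying the first part to the conjugate $\overline{\alpha}$, whose minimal period is still $n$ and which satisfies $\sigma^{j}(\overline{\alpha})\succcurlyeq\overline{\alpha}$.

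First I would unwind periodicity: since $\mu=(\mu_1\cdots\mu_m)^{\infty}$, for any $0<i<m$ we have
\[
\sigma^{i}(\mu)=\mu_{i+1}\cdots\mu_m\,\mu_1\cdots\mu_m\,\mu_1\cdots \qtq{and} \mu=\mu_1\cdots\mu_{m-i}\,\mu_{m-i+1}\cdots\mu_m\,\mu_1\cdots,
\]
so the initial blocks of length $m-i$ of $\sigma^{i}(\mu)$ and $\mu$ are exactly $\mu_{i+1}\cdots\mu_m$ and $\mu_1\cdots\mu_{m-i}$. From $\sigma^{i}(\mu)\succcurlyeq\mu$ I would rule out $\mu_{i+1}\cdots\mu_m\prec\mu_1\cdots\mu_{m-i}$, because the first index of disagreement would already force $\sigma^{i}(\mu)\prec\mu$. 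Hence the only alternatives are the desired strict inequality $\mu_{i+1}\cdots\mu_m\succ\mu_1\cdots\mu_{m-i}$ or equality $\mu_{i+1}\cdots\mu_m=\mu_1\cdots\mu_{m-i}$.

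The heart of the argument is excluding the equality case. Assuming equality, periodicity gives
\[
\sigma^{i}(\mu)=\mu_1\cdots\mu_{m-i}\,\mu \qtq{and} \mu=\mu_1\cdots\mu_{m-i}\,\sigma^{m-i}(\mu),
\]
so after cancelling the common prefix, $\sigma^{i}(\mu)\succcurlyeq\mu$ becomes $\mu\succcurlyeq\sigma^{m-i}(\mu)$. Combining this with the hypothesis $\sigma^{m-i}(\mu)\succcurlyeq\mu$ (which is one of our assumed shift inequalities, applied at index $m-i$) yields $\sigma^{m-i}(\mu)=\mu$. But then $\mu$ has period $m-i$ with $0<m-i<m$, contradicting the minimality of $m$. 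Thus only the strict case survives, proving the first claim.

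I do not anticipate a serious obstacle: the proof is a short case analysis that exploits the special form of periodic sequences and nothing deeper. The only point that requires a little care is being explicit that the hypothesis $\sigma^{j}(\mu)\succcurlyeq\mu$ is used at \emph{two} different shifts (namely at $j=i$ and at $j=m-i$) in order to force equality and then collapse the period. The second statement, about $\alpha$, is handled by the same scheme with inequalities reversed, or more cleanly by reducing to the first via conjugation, which interchanges $\succcurlyeq$ and $\preccurlyeq$ and preserves the minimal period.
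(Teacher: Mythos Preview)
Your argument is correct. The paper itself does not supply a proof of this lemma; it is quoted verbatim from \cite[Lemma~3.2]{KalKonLanLi2020}, so there is no ``paper's own proof'' to compare against. Your direct derivation---reading off $\mu_{i+1}\cdots\mu_m\succcurlyeq\mu_1\cdots\mu_{m-i}$ from $\sigma^i(\mu)\succcurlyeq\mu$, then in the equality case cancelling the common prefix to obtain $\mu\succcurlyeq\sigma^{m-i}(\mu)$, combining with $\sigma^{m-i}(\mu)\succcurlyeq\mu$ to force $\sigma^{m-i}(\mu)=\mu$, and contradicting the minimality of $m$---is the standard and complete proof of this well-known fact.
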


\begin{lemma}\label{l:V0}
	Let $m, n\in \mathbb{N}$. The following statements are equivalent.
	\begin{enumerate}[\upshape (i)]
		\item  $\left.\mu=\left(\mu_1 \cdots \mu_m \alpha_1 \cdots \alpha_n\right)^{\infty}\text{, } \alpha=(\alpha_1 \cdots \alpha_n \mu_1 \cdots \mu_m\right)^{\infty},$
		\item  $\sigma^{m+n}(\mu)=\mu\text{, }  \sigma^m(\mu)=\alpha,$
		\item  $\sigma^{m+n}(\alpha)=\alpha\text{, }  \sigma^n(\alpha)=\mu,$
		\item  $\sigma^m(\mu)=\alpha\text{, }  \sigma^n(\alpha)=\mu.$
	\end{enumerate}
\end{lemma}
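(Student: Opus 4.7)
The plan is to treat (iv) as the hub and show each of (i), (ii), (iii) is equivalent to (iv); the implications are all one- or two-line shift manipulations, so the whole argument is really a bookkeeping exercise with $\sigma$.

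First I would dispose of (i) $\Leftrightarrow$ (iv). Assuming (i), reading off the first $n$ coordinates of $\sigma^m(\mu)$ and the subsequent period block gives $\sigma^m(\mu)=\alpha$, and symmetrically $\sigma^n(\alpha)=\mu$. Conversely, assuming (iv), I compute $\sigma^{m+n}(\mu)=\sigma^n(\sigma^m(\mu))=\sigma^n(\alpha)=\mu$, so $\mu$ is periodic of period $m+n$, i.e.\ $\mu=(\mu_1\cdots\mu_{m+n})^\infty$. Applying $\sigma^m(\mu)=\alpha$ to the initial $n$ digits shows $\mu_{m+1}\cdots\mu_{m+n}=\alpha_1\cdots\alpha_n$, which yields $\mu=(\mu_1\cdots\mu_m\alpha_1\cdots\alpha_n)^\infty$. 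The formula for $\alpha$ follows in the same way from $\sigma^{m+n}(\alpha)=\alpha$ (which in turn follows from $\sigma^n(\alpha)=\mu$ and $\sigma^m(\mu)=\alpha$).

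Next I would check (ii) $\Leftrightarrow$ (iv). From (ii), $\sigma^n(\alpha)=\sigma^n(\sigma^m(\mu))=\sigma^{m+n}(\mu)=\mu$, so together with the given $\sigma^m(\mu)=\alpha$ we obtain (iv). Conversely (iv) immediately implies $\sigma^{m+n}(\mu)=\sigma^n(\alpha)=\mu$ as above, and $\sigma^m(\mu)=\alpha$ is half of (iv), so (ii) holds. The equivalence (iii) $\Leftrightarrow$ (iv) is the mirror image: from (iii), $\sigma^m(\mu)=\sigma^m(\sigma^n(\alpha))=\sigma^{m+n}(\alpha)=\alpha$, giving (iv); conversely from (iv) we get $\sigma^{m+n}(\alpha)=\sigma^m(\mu)=\alpha$, which combined with $\sigma^n(\alpha)=\mu$ is (iii).

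There is no real obstacle here; the only thing to be careful about is making sure that when I read off a ``period block'' in the (iv) $\Rightarrow$ (i) step I am using exactly $m+n$ digits starting from position $1$ of $\mu$ (and similarly for $\alpha$), so that the subblock identification $\mu_{m+1}\cdots\mu_{m+n}=\alpha_1\cdots\alpha_n$ is unambiguous. Once this indexing is stated clearly, the four conditions collapse into a single statement about a length-$(m+n)$ periodic sequence split at position $m$.
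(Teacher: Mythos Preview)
Your proof is correct and is essentially the same as the paper's: both arguments consist of the obvious one-line shift computations $\sigma^{m+n}(\mu)=\sigma^n(\sigma^m(\mu))=\sigma^n(\alpha)=\mu$ (and its symmetric counterpart) together with reading off the period block. The only difference is organizational---the paper routes the implications through (i) while you route them through (iv)---but the substance is identical.
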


\begin{proof}
	$(i)\Rightarrow(ii),(i)\Rightarrow(iii)\text{ and }(i)\Rightarrow(iv)$ are obvious.
	Following we prove $(ii)\Rightarrow(i)$. $\sigma^{m+n}(\mu)=\mu, \sigma^m(\mu)=\alpha$ implies
	$$\mu=(\mu_1\cdots  \mu_m \mu_{m+1}\cdots  \mu_{m+n})^\infty\text{ and }(\mu_{m+1}\cdots  \mu_{n+m}\mu_1\cdots  \mu_n)^\infty=\alpha.$$
	Let $\alpha_1\cdots\alpha_n=\mu_{m+1}\cdots \mu_{m+n}$, we have (i).
	Similarly, we have $(iii)\Rightarrow(i)$.
	
	It remains to prove $(iv)\Rightarrow(i)$. It suffices to prove  $(iv)\Rightarrow(iii)$. It is true since $\alpha=\sigma^m(\mu), \;\;\mu=\sigma^n(\alpha)$ means
	$\alpha=\sigma^m(\mu)=\sigma^{m+n}(\alpha)$.
\end{proof}

We recall from Definition \ref{def:vb2} that
\begin{equation*}
	\mathcal{U}'_2:=\{ (\mu, \alpha)\in \Sigma_2^0\times\Sigma_2^1: \mu\prec\sigma^{n}(\mu)\prec\alpha, \mu\prec\sigma^{n}(\alpha)\prec\alpha,
	\text{ for every } n \in \mathbb{N}\},
\end{equation*}
and
\begin{equation*}
	\mathcal{V}'_2:=\{ (\mu, \alpha)\in\Sigma_2^0\times\Sigma_2^1: \mu\preccurlyeq\sigma^{n}(\mu)\preccurlyeq\alpha,  \mu\preccurlyeq
	\sigma^{n}(\alpha)\preccurlyeq\alpha,\text{ for every } n \in \mathbb{N}_0\}.
\end{equation*}
$\overline{\mathcal{U}'_2}$ denotes the closure of set $\mathcal{U}'_2$. In order to
characterize  $\overline{\mathcal{U}'_2}$, $ \mathcal{V}'_2$ is divided into nine situations in the following lemma.
\begin{lemma}\label{l:V'} Suppose $(\mu,\alpha)\in \mathcal{V}'_2$. We have the following possibilities.
	\begin{enumerate}[\upshape (i)]
		\item $\mu\prec\sigma^{i}(\mu)\prec\alpha \qtq{and}\mu\prec\sigma^{j}(\alpha)\prec\alpha,$
		\item $\mu\prec\sigma^{i}(\mu)\prec\alpha \qtq{and}\mu\preccurlyeq\sigma^{j}(\alpha)\prec\alpha,$
		\item $\mu\prec\sigma^{i}(\mu)\prec\alpha\qtq{and}\mu\prec\sigma^{j}(\alpha)\preccurlyeq\alpha,$
		\item $\mu\prec\sigma^{i}(\mu)\preccurlyeq\alpha\qtq{and}\mu\prec\sigma^{j}(\alpha)\preccurlyeq\alpha,$
		\item $\mu\prec\sigma^{i}(\mu)\preccurlyeq\alpha\qtq{and}\mu\prec\sigma^{j}(\alpha)\prec\alpha,$
		\item $\mu\preccurlyeq\sigma^{i}(\mu)\prec\alpha\qtq{and}\mu\prec\sigma^{j}(\alpha)\prec\alpha,$
		\item $\mu\preccurlyeq\sigma^{i}(\mu)\prec\alpha\qtq{and}\mu\preccurlyeq\sigma^{j}(\alpha)\prec\alpha,$
		\item $\mu\preccurlyeq\sigma^{i}(\mu)\prec\alpha\qtq{and}\mu\prec\sigma^{j}(\alpha)\preccurlyeq\alpha,$
		\item $\mu\preccurlyeq\sigma^{i}(\mu)\preccurlyeq\alpha\qtq{and}\mu\preccurlyeq\sigma^{j}(\alpha)\preccurlyeq\alpha.$
	\end{enumerate}
	\color{black} These cases hold for all \( i, j \in \mathbb{N} \), and here each weak equality means that there exists at least one \( i, j \in \mathbb{N} \) such that the equalities hold. \color{black}
\end{lemma}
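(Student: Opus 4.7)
For any $(\mu,\alpha)\in\mathcal{V}_2'$, each of the four weak inequalities defining membership in $\mathcal{V}_2'$ is either strict for every positive shift index or attained as equality for at least one positive index. I would encode this dichotomy by four Boolean flags: let $E_1=1$ mean that there exists $i\in\mathbb{N}$ with $\sigma^i(\mu)=\mu$, let $E_2=1$ mean that there exists $i\in\mathbb{N}$ with $\sigma^i(\mu)=\alpha$, let $E_3=1$ mean that there exists $j\in\mathbb{N}$ with $\sigma^j(\alpha)=\mu$, and let $E_4=1$ mean that there exists $j\in\mathbb{N}$ with $\sigma^j(\alpha)=\alpha$. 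A direct bookkeeping match identifies the cases (i)--(ix) with the nine Boolean quadruples $(0,0,0,0)$, $(0,0,1,0)$, $(0,0,0,1)$, $(0,1,0,1)$, $(0,1,0,0)$, $(1,0,0,0)$, $(1,0,1,0)$, $(1,0,0,1)$, and $(1,1,1,1)$. The task therefore reduces to showing the remaining seven Boolean quadruples in $\{0,1\}^4$ are incompatible with $(\mu,\alpha)\in\mathcal{V}_2'$.

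The engine is three shift identities. First, $E_2\wedge E_3\Rightarrow E_1\wedge E_4$: if $\sigma^{i_2}(\mu)=\alpha$ and $\sigma^{j_1}(\alpha)=\mu$, then $\sigma^{i_2+j_1}(\mu)=\sigma^{j_1}(\alpha)=\mu$ and $\sigma^{j_1+i_2}(\alpha)=\sigma^{i_2}(\mu)=\alpha$. Second, $E_1\wedge E_2\Rightarrow E_3\wedge E_4$: given $\sigma^{i_1}(\mu)=\mu$ and $\sigma^{i_2}(\mu)=\alpha$, commuting the shifts yields $\sigma^{i_1}(\alpha)=\sigma^{i_2}(\sigma^{i_1}(\mu))=\sigma^{i_2}(\mu)=\alpha$, and choosing $k\in\mathbb{N}$ large enough that $ki_1>i_2$ gives $\sigma^{ki_1-i_2}(\alpha)=\sigma^{ki_1}(\mu)=\mu$. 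Third, $E_3\wedge E_4\Rightarrow E_1\wedge E_2$ is proved symmetrically, starting from $\sigma^{j_2}(\alpha)=\alpha$ and $\sigma^{j_1}(\alpha)=\mu$. A minor technical remark: the degenerate possibility $\mu=\alpha$ never arises, since $\mu\in\Sigma_2^0$ and $\alpha\in\Sigma_2^1$.

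Combining these three implications, any Boolean quadruple whose ``on'' positions include one of the pairs $\{E_1,E_2\}$, $\{E_2,E_3\}$, or $\{E_3,E_4\}$ is forced to have all four flags on. This eliminates the three two-flag quadruples $(1,1,0,0)$, $(0,1,1,0)$, $(0,0,1,1)$ and the four three-flag quadruples $(1,1,1,0)$, $(1,1,0,1)$, $(1,0,1,1)$, $(0,1,1,1)$; these are exactly the seven forbidden configurations. The nine remaining quadruples coincide with cases (i)--(ix), completing the proof. The main obstacle is not mathematical depth but careful bookkeeping: matching each listed case to its Boolean signature and then turning the crank on the sixteen possibilities. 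The only genuine computations are the three short shift identities displayed above.
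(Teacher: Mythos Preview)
Your proof is correct and follows essentially the same approach as the paper: the paper invokes Lemma~\ref{l:V0}, whose content is precisely the equivalence of the conditions $E_1\wedge E_2$, $E_3\wedge E_4$, and $E_2\wedge E_3$ (in your notation), and you have reproved those implications inline as your three ``shift identities.'' Your explicit Boolean bookkeeping of the sixteen quadruples and their reduction to the nine listed cases makes transparent what the paper leaves to the reader, but the mathematical engine is identical.
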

\begin{proof}
	It follows from Lemma \ref{l:V0}.
\end{proof}

\begin{lemma}\label{l:V'compact}
	The set $\mathcal{V}'_2$ is compact.	
\end{lemma}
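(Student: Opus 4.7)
My plan is to show that $\mathcal{V}'_2$ is a closed subset of $\Sigma_2 \times \Sigma_2$, and then invoke compactness of the ambient space. Since $(\Sigma_2, d)$ is compact, the product $(\Sigma_2 \times \Sigma_2, d_{\Sigma_2 \times \Sigma_2})$ with the metric in \eqref{eq:distanceSym} is also compact, so closedness of $\mathcal{V}'_2$ immediately yields the result.

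To establish closedness, I would first record three elementary facts. (a) The set $\Sigma_2^0 \times \Sigma_2^1$ is closed, since the conditions $d_1=0$ and $d_1=1$ depend only on the first coordinate and hence are continuous functions in the metric $d$. (b) For each $k \in \mathbb{N}_0$ the shift $\sigma^k : \Sigma_2 \to \Sigma_2$ is continuous, so the maps $(\mu,\alpha) \mapsto (\mu, \sigma^k(\mu))$, $(\mu,\alpha) \mapsto (\sigma^k(\mu), \alpha)$, $(\mu,\alpha) \mapsto (\mu, \sigma^k(\alpha))$ and $(\mu,\alpha) \mapsto (\sigma^k(\alpha), \alpha)$ from $\Sigma_2 \times \Sigma_2$ to itself are all continuous. (c) The relation
\[
R := \{(\beta, \gamma) \in \Sigma_2 \times \Sigma_2 : \beta \preccurlyeq \gamma\}
\]
is closed: if $\beta \succ \gamma$, say with $\beta_i = \gamma_i$ for $i < j$ and $\beta_j > \gamma_j$, then any $(\beta',\gamma')$ with $d(\beta,\beta') < 2^{-j}$ and $d(\gamma,\gamma') < 2^{-j}$ also satisfies $\beta' \succ \gamma'$, so the complement of $R$ is open.

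With these ingredients, I rewrite the definition of $\mathcal{V}'_2$ as an intersection of closed sets. Explicitly,
\[
\mathcal{V}'_2 = (\Sigma_2^0 \times \Sigma_2^1) \cap \bigcap_{m \in \mathbb{N}_0} A_m \cap \bigcap_{m \in \mathbb{N}_0} B_m \cap \bigcap_{n \in \mathbb{N}_0} C_n \cap \bigcap_{n \in \mathbb{N}_0} D_n,
\]
where $A_m = \{(\mu,\alpha) : \mu \preccurlyeq \sigma^m(\mu)\}$, $B_m = \{(\mu,\alpha) : \sigma^m(\mu) \preccurlyeq \alpha\}$, $C_n = \{(\mu,\alpha) : \mu \preccurlyeq \sigma^n(\alpha)\}$ and $D_n = \{(\mu,\alpha) : \sigma^n(\alpha) \preccurlyeq \alpha\}$. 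Each of these sets is the preimage of $R$ under a continuous map by (b) and (c), hence closed. Combined with (a), this realizes $\mathcal{V}'_2$ as a countable intersection of closed sets in $\Sigma_2 \times \Sigma_2$, which is therefore closed.

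Finally, since $\Sigma_2 \times \Sigma_2$ is compact and $\mathcal{V}'_2$ is a closed subset, $\mathcal{V}'_2$ is compact. There is no real obstacle here; the only point that requires any care is verifying that the lexicographic order defines a closed subset, which is the content of (c) and is where the product topology on the symbolic space is essential.
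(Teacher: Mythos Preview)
Your proof is correct and follows essentially the same approach as the paper: both reduce to showing that $\mathcal{V}'_2$ is closed in the compact space $\Sigma_2\times\Sigma_2$, using that the lexicographic order relation is closed (equivalently, that strict lexicographic inequalities are open). The paper argues directly that the complement is open by observing that a violating strict inequality on a finite prefix persists under small perturbations, whereas you express $\mathcal{V}'_2$ as a countable intersection of preimages of the closed order relation under continuous maps; these are two presentations of the same idea.
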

\color{black}	
\begin{proof}
	Since $\Sigma_2\times \Sigma_2$ is compact and $\mathcal{V}'_2\subseteq \Sigma_2\times \Sigma_2$, it suffices to prove $\mathcal{V}'_2$ is closed. We show
	that the complement of $\mathcal{V}'_2$ in $\Sigma_2\times \Sigma_2$ is open. Let $ (\mu, \alpha) \in (\Sigma_2 \times \Sigma_2) \setminus \mathcal{V}'_2 $ with $\mu = (\mu_i)$ and $\alpha = (\alpha_i)$. Then there exist two integers $k\text{ and } m\in\mathbb{N}$ such that at least one of the following four inequalities hold.
	\begin{align*}
		\alpha_{k+1} \cdots \alpha_{k+m} \prec \mu_1 \cdots \mu_m , \\
		\alpha_{k+1} \cdots \alpha_{k+m} \succ \alpha_1 \cdots \alpha_m,\\
		\mu_{k+1} \cdots \mu_{k+m} \prec \mu_1 \cdots \mu_m , \\
		\mu_{k+1} \cdots \mu_{k+m} \succ \alpha_1 \cdots \alpha_m .
	\end{align*}
	It is clear that if $(\mu', \alpha')$ is sufficiently close to $(\mu, \alpha)$,  then the corresponding inequality holds for $(\mu', \alpha')$, too, and therefore  $(\mu', \alpha')$ also belongs to the complement of $\mathcal{V}'_2 $.
\end{proof}
\color{black}
Since by Lemma \ref{l:V'compact}, \( \overline{\mathcal{U}'_2} \subseteq \mathcal{V}'_2 \), we now proceed to provide a lexicographic characterization of \( \overline{\mathcal{U}'_2} \) in the following lemma.
\begin{lemma}\label{l:U'}
	If  $(\mu, \alpha)$ satisfies (i)-(viii) listed in Lemma \ref{l:V'},
	then $(\mu, \alpha)\in\overline{\mathcal{U}'_2}$.
\end{lemma}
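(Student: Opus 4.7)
The proof plan splits according to the eight sub-cases of Lemma~\ref{l:V'}. Case~(i) is immediate, since it asserts $(\mu,\alpha)\in\mathcal{U}'_2\subseteq\overline{\mathcal{U}'_2}$. For each of cases~(ii)--(viii) I would exhibit an explicit approximating sequence $(\mu^{(k)},\alpha^{(k)})\in\mathcal{U}'_2$ converging to $(\mu,\alpha)$ in the metric $d_{\Sigma_{2}\times\Sigma_{2}}$ of~\eqref{eq:distanceSym}.

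The key structural input is Lemma~\ref{l:V0}: case~(ix) is precisely the configuration in which both cross-identities $\sigma^m(\mu)=\alpha$ and $\sigma^n(\alpha)=\mu$ hold simultaneously, so in cases~(ii)--(viii) at most one of these can occur. This leaves enough freedom to perturb $\mu$ or $\alpha$ into the strict interior of $\mathcal{V}'_2$. Concretely, letting $N_k\to\infty$, I would set
\[
\mu^{(k)}=\mu_1\cdots\mu_{N_k}\,\omega_k,\qquad \alpha^{(k)}=\alpha_1\cdots\alpha_{N_k}\,\eta_k,
\]
with tails $\omega_k\in\Sigma_{2}^{0}$ and $\eta_k\in\Sigma_{2}^{1}$ chosen according to the case. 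When only $\mu$ is periodic (cases~(vi),~(vii)), take $\alpha^{(k)}=\alpha$ and let $\omega_k$ be a periodic block built from the minimal period $\mu_{1}\cdots\mu_{p}$, using that $\mu_{p}=1$ is forced by $\mu\preccurlyeq\sigma^{i}(\mu)$ together with the minimality of $p$; a symmetric device treats the case when only $\alpha$ is periodic (case~(iii)). In the cross-cases~(ii),~(iv),~(v), where $\sigma^{j}(\alpha)=\mu$ or $\sigma^{i}(\mu)=\alpha$, the partner sequence itself supplies a safe tail to graft on after position $N_k$. Case~(viii) combines both modifications.

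The main obstacle will be the case-by-case verification that each constructed $(\mu^{(k)},\alpha^{(k)})$ actually lies in $\mathcal{U}'_2$, i.e.\ that
$\mu^{(k)}\prec\sigma^{i}(\mu^{(k)})\prec\alpha^{(k)}$
and
$\mu^{(k)}\prec\sigma^{i}(\alpha^{(k)})\prec\alpha^{(k)}$
hold for every $i\in\mathbb{N}$. Shifts of index $i<N_k$ reduce to comparisons governed by $(\mu,\alpha)\in\mathcal{V}'_2$ coupled with the strictness built into the tail, with Lemma~\ref{l:SE} producing the lexicographic witness at the first position of disagreement. Shifts deep inside the periodic tail are handled by Lemma~\ref{l:periodic_PR_LN}. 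The delicate junction shifts around index $N_k$, and the explicit choice of the two tails in each of the seven sub-cases, constitute the main technical labour.
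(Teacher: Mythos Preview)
Your strategy coincides with the paper's: case~(i) is $\mathcal{U}'_2$ itself, and for each remaining case one builds an explicit approximant in (or reducible to) $\mathcal{U}'_2$, with Lemmas~\ref{l:SE} and~\ref{l:periodic_PR_LN} driving the lexicographic checks. The concrete constructions differ, however. For case~(ii) the paper does not graft a partner tail but uses the Lemma~\ref{l:SE} indices directly, setting $\mu^k=(\mu_1\cdots\mu_{n_k})(\mu_1\cdots\mu_{n_{k+1}})^\infty\nearrow\mu$ so that $(\mu^k,\alpha)\in\mathcal{U}'_2$; this two-block period is exactly what makes the junction shifts tractable. For the periodic cases the paper does graft from the partner, but with the \emph{extremal} shift: in case~(iii) one takes $\alpha^\ell=(\alpha_1\cdots\alpha_n)^\ell\sigma^S(\mu)$ with $\sigma^S(\mu):=\sup_j\sigma^j(\mu)$, and dually in case~(vi) one takes $\mu^\ell=(\mu_1\cdots\mu_m)^\ell\sigma^T(\alpha)$ with $\sigma^T(\alpha):=\inf_j\sigma^j(\alpha)$. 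The remaining cases are handled by reduction---(iii)$\to$(ii), (iv)$\to$(iii), (vii)$\to$(vi)$\to$(v), (viii)$\to$(vii)---rather than by producing a $\mathcal{U}'_2$ element directly each time.

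One point in your outline needs sharpening. Your description of $\omega_k$ in cases~(vi),~(vii) as ``a periodic block built from the minimal period $\mu_1\cdots\mu_p$'' is ambiguous, and read literally it gives $\mu^{(k)}=\mu$, which is no perturbation at all. Note that the paper's tail here comes from the partner $\alpha$, not from $\mu$; this is in fact closer to your own heuristic for the cross-cases, and taking the extremal shift $\sigma^T(\alpha)$ (rather than an arbitrary one) is precisely what makes the inequalities $\mu^{(k)}\prec\sigma^i(\mu^{(k)})$ and $\sigma^i(\mu^{(k)})\preccurlyeq\alpha$ verifiable.
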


\begin{proof}
	\color{black}Case (i) is clearly. \color{black}We  prove that for each case (ii)-(viii) listed in Lemma \ref{l:V'}, there exist $(\mu^k, \alpha^\ell)\in\mathcal{U}'_2$ such that  $\mu^k\rightarrow \mu$ and $\alpha^\ell\rightarrow \alpha$ as $k\rightarrow \infty$ and $\ell\rightarrow \infty$. This implies that
	$(\mu, \alpha)\in\overline{\mathcal{U}'_2}$. We only prove  cases (ii)-(iv) and (viii). We give the construction of sequence in (v)-(vii) and omit their proof.
	Let $\mu=(\mu_i)$ and  $\alpha=(\alpha_i)$.

	{\bf{Case (ii)}.} We prove  case  (ii) listed in Lemma \ref{l:V'}
	by constructing sequences $\mu^k$ that satisfy
	$
	\mu^k\nearrow \mu \;\text{as}\; k\rightarrow \infty,
	$
	and
	\begin{equation}\label{e:unique0}
		\mu^k\prec\sigma^i (\mu^k)\prec\alpha \qtq{and}\mu^k\prec\sigma^j(\alpha)\prec\alpha\;\text{for all}\; i, j\in\mathbb{N}.
	\end{equation}
	Since \eqref{e:unique0} means  $(\mu^k, \alpha)\in \mathcal{U}'_2$ and $(\mu^k, \alpha)\rightarrow(\mu, \alpha) $ when 	$\mu^k\nearrow \mu$.
	We define $$\mu^k:=(\mu_1\mu_2\cdots\mu_{n_k})(\mu_1\mu_2\cdots\mu_{n_{k+1}})^\infty.$$
	Here $n_1,n_2,\cdots,n_k,n_{k+1}\cdots$  are given as in  Lemma \ref{l:SE} (i). Then we obtain
	\begin{equation}\label{e:u2}
		\mu_{m+1}\cdots \mu_{n_{k}}\succ\mu_{1}\cdots\mu_{n_{k}-m}\qtq{for all} 1\le m\le n_{k}-1,
	\end{equation}
	\begin{equation}\label{e:u3}
		\mu_{m+1}\cdots \mu_{n_{k+1}}\succ\mu_{1}\cdots\mu_{n_{k+1}-m}\qtq{for all} 1\le m\le n_{k+1}-1
	\end{equation}
	and
	\begin{equation}\label{e:u4}
		\mu_{1}\cdots\mu_{n_{k+1}-m}\succcurlyeq\mu_{1}\cdots\mu_{n_k}\mu_{1}\cdots\mu_{n_{k+1}-n_k-m}\qtq{for all} 1\le m\le n_{k+1}-n_k.
	\end{equation}	
	Then by \eqref{e:u3}, we have 
	\begin{equation}\label{e:u5}
		\mu_1\cdots\mu_{n_{k}}\mu_{n_{k}+1}\cdots \mu_{n_{k+1}}\succ\mu_{1}\cdots\mu_{n_{k}}\mu_{1}\cdots\mu_{n_{k+1}-n_{k}}.
	\end{equation}

	It follows from \eqref{e:u5} that $\mu^k\prec\mu^{k+1}\prec\mu$ and $\mu^k\nearrow \mu \;\text{as}\; k\rightarrow \infty$. $\mu^k\prec\mu$ together with the  condition $\mu\preccurlyeq\sigma^{j}(\alpha)\prec\alpha$ yields  $\mu^k\prec\sigma^{j}(\alpha)\prec\alpha$. \eqref{e:u2}-\eqref{e:u5}  means $\mu^k\prec\sigma^i(\mu^k)\qtq{for all}i\in\mathbb{N}$.
	
	Now it remains to prove $\sigma^i(\mu^k)\prec\alpha$. It is obvious for $i=0$. For other cases, we have
	\begin{equation}\label{e:overline U}
		\begin{split}
			\sigma^i(\mu^k)&=(\mu_{i+1}\mu_{i+2}\cdots\mu_{n_{k}})(\mu_1\mu_2\cdots\mu_{n_{k+1}})^\infty\\
			&\prec(\mu_{i+1}\mu_{i+2}\cdots\mu_{n_k})(\mu_{n_k+1}\mu_{n_k+2}\cdots
			\mu_{n_k +n_{k+1}})(\mu_{n_k +n_{k+1}+1}\mu_{n_k +n_{k+1}+2}\cdots\mu_{n_k +2n_{k+1}})\cdots\\
			&=\sigma^i(\mu)\prec\alpha\qtq{when}1\leq i\leq n_k-1,\\
			\sigma^i(\mu^k)
			&=\mu_{l+1}\mu_{l+2}\cdots\mu_{n_{k+1}}(\mu_1\mu_2\cdots\mu_{n_{k+1}})^\infty\\
			&\prec\mu_{l+1}\mu_{l+2}\cdots\mu_{n_{k+1}}(\mu_{n_{k+1}+1}\mu_{n_{k+1}+2}\cdots
			\mu_{2n_{k+1}})(\mu_{2n_{k+1}+1}\mu_{2n_{k+1}+2}\cdots\mu_{3n_{k+1}})\cdots\\
			&=\sigma^l(\mu)\prec\alpha\qtq{when} i=n_k+l \qtq{and} 0\leq l\leq n_{k+1}-1.
		\end{split}
	\end{equation}
	\medskip
	{\bf{Case (iii)}.} We prove case  (iii) listed in Lemma \ref{l:V'}
	by constructing sequence $\alpha^\ell$ satisfying $
	\alpha^\ell\nearrow \alpha \;\text{when}\; \ell\rightarrow \infty$ and
	\begin{equation*}
		\mu\prec\sigma^{i}(\mu)\prec\alpha^\ell,\;	\mu\preccurlyeq\sigma^j(\alpha^\ell)\prec\alpha^\ell \;\text{for all}\;i, j\in\mathbb{N}.
	\end{equation*}
	Clearly, $(\mu, \alpha^\ell)$ satisfies the case  (ii) listed in Lemma \ref{l:V'} for all $\ell$. By applying the result of case (ii), we have $(\mu^k, \alpha^\ell)\in \mathcal{U}'_2$ and  $(\mu^k, \alpha^\ell)\rightarrow (\mu,
	\alpha)$ as $k\rightarrow \infty$ and $\ell\rightarrow \infty$.
	
	We have, by the assumption, that $\mu\prec\sigma^{j}(\alpha)\preccurlyeq\alpha$, $\alpha=(\alpha_1\cdots\alpha_n)^\infty$.
	Set $$\alpha^\ell:=(\alpha_1\alpha_2\cdots\alpha_n)^\ell\sigma^S(\mu)\qtq{with}\sigma^S(\mu):=\sup\{\ \sigma^j(\mu): j\in\mathbb{N} \}.
	$$
	It is clear that
	$\alpha^\ell\nearrow \alpha \;\text{as}\; \ell\rightarrow \infty$ because $\sigma^i(\mu)\prec\alpha$ for all $i\in\mathbb{N}_0$.
	First we show that $\sigma^j(\mu)\prec\alpha^\ell$ for all
	$j\in\mathbb{N}_0$. We claim that
	\begin{equation}\label{e:overline U1}
		\begin{split}
			\sigma^j(\mu)\prec\alpha_1\cdots\alpha_{n}\sigma^j(\mu)\qtq{for all}j\in\mathbb{N}_0.		
		\end{split}
	\end{equation} Otherwise, we have
	$\sigma^j(\mu)\succcurlyeq\alpha_1\cdots\alpha_{n}\sigma^j(\mu)\qtq{for some}j\in\mathbb{N}_0.$ This yields that
	$$\sigma^j(\mu)\succcurlyeq\alpha_1\cdots\alpha_{n}\sigma^j(\mu)
	\succcurlyeq\alpha_1\cdots\alpha_{n}\alpha_1\cdots\alpha_{n}\sigma^j(\mu)
	\succcurlyeq\cdots \succcurlyeq(\alpha_1\cdots\alpha_{n})^{\infty}=\alpha.$$
	It is impossible by the assumption. Therefore, we have $$\mu\prec\sigma^{j}(\mu)\preccurlyeq\sigma^{S}(\mu)\prec\alpha_1\alpha_2\cdots\alpha_{n}\sigma^{S}(\mu)
	\prec\cdots\prec(\alpha_1\alpha_2\cdots\alpha_{n})^\ell\sigma^{S}(\mu)=\alpha^\ell.$$
	It is obvious that $\sigma^j(\alpha^\ell)\prec\alpha^\ell\qtq{for all}j\in\mathbb{N}$ by Lemma \ref{l:periodic_PR_LN} and (\ref{e:overline U1}). It remains to prove
	$\mu\preccurlyeq\sigma^j(\alpha^\ell)\qtq{for all}j\in\mathbb{N}_0$. Suppose that there exists a $j$ such that $\mu\succ\sigma^j(\alpha^\ell)\qtq{for some}0\leq j\leq ln-1$,
	which leads to $\mu_1\cdots\mu_{n-j}\succ\alpha_{j+1}\cdots\alpha_n$. It   implies that $\mu\succ\sigma^j(\alpha)$, which is impossible.
	Therefore, we have $\mu\preccurlyeq\sigma^j(\alpha^\ell)\qtq{for all}j\in\mathbb{N}_0$.\color{black}
	\medskip
	
	{\bf{Case (iv)}.}
	We prove the case  (iv) listed in Lemma \ref{l:V'}
	by constructing  sequences $\mu^k$ satisfying
	$
	\mu^k\nearrow \mu \;\text{as}\; k\rightarrow \infty,
	$
	and
	\begin{equation*}
		\mu^k\prec\sigma^i(\mu^k)\prec\alpha \ \qtq{and}\mu^k\prec\sigma^j(\alpha)\preccurlyeq\alpha \;\text{ for all } i, j\in\mathbb{N}.
	\end{equation*}  The sequence $\mu^k$ can be constructed the same as in case (ii).  It is clear that we have  $	\mu^k\prec\sigma^i(\mu^k)$ and $\mu^k\prec\sigma^j(\alpha)\preccurlyeq\alpha$ by the proof of case (ii). The only difference is the proof of  $ \sigma^i(\mu^k)\prec\alpha$. It  still holds , since  there are strict inequalities in \eqref{e:overline U}.
	
	By applying the result of case (iii), we can construct sequence   $\alpha^\ell$ satisfying $
	\alpha^\ell\nearrow \alpha \;\text{as}\; \ell\rightarrow \infty$ and
	\begin{equation*}
		\mu^k\prec\sigma^{i}(\mu^k)\prec\alpha^\ell,\;	\mu^k\preccurlyeq\sigma^j(\alpha^\ell)\prec\alpha^\ell \text{ for all }\; i, j\in\mathbb{N}.
	\end{equation*}
	Here  	$
	\mu^k\nearrow \mu \;\text{as}\; k\rightarrow \infty,
	$ and $
	\alpha^\ell\nearrow \alpha \;\text{as}\; \ell\rightarrow \infty$.
	
	\medskip
	
	{\bf{Cases (v-vii)}.}
	
	(v) We construct sequences $\alpha^k$ satisfying
	$\alpha^k\searrow \alpha \;\text{as}\; k\rightarrow \infty,$
	and
	\begin{equation*}\label{1.20}
		\mu\prec\sigma^i(\mu)\prec\alpha^k \text{ and } \mu \prec\sigma^j(\alpha^k)\prec\alpha^k \text{ for all } i, j\in\mathbb{N}.
	\end{equation*}
	Set $$\alpha^k:=(\alpha_1\alpha_2\cdots\alpha_{m_k})(\alpha_1\alpha_2\cdots\alpha_{m_{k+1}})^\infty.$$
	Here $m_1, m_2,\cdots, m_k\cdots$  are given as in  Lemma \ref{l:SE} (ii).
	\medskip
	
	(vi) We construct sequence $\mu^\ell\searrow \mu \;\text{as}\; \ell\rightarrow \infty,$
	and
	\begin{equation*}
		\mu^\ell\prec\sigma^i(\mu^\ell)\preccurlyeq\alpha \text{ and } \mu^\ell\prec\sigma^{j}(\alpha)\prec\alpha\;\text{for all}\; i, j\in\mathbb{N}.
	\end{equation*}
	It follows from the  the assumption that  $\mu=(\mu_1\cdots\mu_m)^\infty$.
	Set $$\mu^k:=(\mu_1\cdots\mu_m)^k\sigma^T(\alpha)\qtq{with}\sigma^T(\alpha):=\inf\{\ \sigma^j(\alpha): j\in\mathbb{N} \}.$$
	\medskip
	
	(vii) We construct the sequence $\alpha^k$ as in (v) which satisfies
	$
	\alpha^k\searrow \alpha \;\text{as}\; k\rightarrow \infty,
	$
	and
	\begin{equation*}
		\mu\preccurlyeq\sigma^i(\mu)\prec\alpha^k\ \qtq{and}\mu\prec\sigma^j(\alpha^k)\prec\alpha^k \;\text{ for all } i, j\in\mathbb{N}.
	\end{equation*}
	
	\medskip
	
	{\bf{Case (viii)}.} We show there exist
	a sequence $\alpha^\ell$ satisfying $
	\alpha^\ell\nearrow \alpha \;\text{as}\; \ell\rightarrow \infty,
	$
	and
	\begin{equation}\label{e:410}
		\mu\preccurlyeq\sigma^{i}(\mu)\prec\alpha^\ell	\text{ and }\mu\preccurlyeq\sigma^j(\alpha^\ell)\prec\alpha^\ell  \;\text{for each}\; i, j\in\mathbb{N}.
	\end{equation}
	That is case (viii) can be reduced to case (vii).
	Set $$\alpha^\ell:=(\alpha_1\cdots\alpha_n)^\ell\sigma^S((\mu_i))\qtq{with}\sigma^S((\mu_i)):=\sup\{\ \sigma^j((\mu_i)): j\in\mathbb{N} \}.
	$$
	We prove $(\mu, \alpha^\ell)$ satisfying \eqref{e:410} by using the same argument in  case (iii).
\end{proof}

Now we begin to prove the main content of Theorem \ref{t:main2}.
\begin{lemma}\label{l: V'isolated}
	If $(\mu, \alpha)\in \mathcal{V}'_2$ and satisfies
	\begin{equation}\label{e:V1}
		\sigma^{m}(\mu)=(\alpha),\;\; \sigma^{n}(\alpha)=(\mu)\text{ for some }m,n\in\mathbb{N}.
	\end{equation}
	Then $(\mu, \alpha)$ is an isolated point of  $\mathcal{V}'_2$.
\end{lemma}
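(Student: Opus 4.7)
The plan is to show that any $(\mu',\alpha') \in \mathcal{V}'_2$ sufficiently close to $(\mu,\alpha)$ must actually equal $(\mu,\alpha)$. By Lemma \ref{l:V0}, the hypothesis $\sigma^m(\mu)=\alpha$ and $\sigma^n(\alpha)=\mu$ forces $\mu = (\mu_1\cdots\mu_m\alpha_1\cdots\alpha_n)^\infty$ and $\alpha = (\alpha_1\cdots\alpha_n\mu_1\cdots\mu_m)^\infty$, so both sequences are purely periodic with period $p := m+n$, and in particular $\sigma^{p}(\mu)=\mu$, $\sigma^{p}(\alpha)=\alpha$. This rigid periodic structure, combined with the four defining inequalities of $\mathcal{V}'_2$, will be used to exclude all possible perturbations.

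Suppose, for contradiction, that there exists $(\mu',\alpha') \in \mathcal{V}'_2 \setminus \{(\mu,\alpha)\}$ arbitrarily close to $(\mu,\alpha)$. Then we may choose such a pair whose initial agreement length $L$ with $(\mu,\alpha)$ is larger than $p$. Let $K = L+1$ be the first index at which either $\mu'_K \neq \mu_K$ or $\alpha'_K \neq \alpha_K$. I split into four sub-cases according to which sequence first deviates and in which lexicographic direction.

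If $\mu'_K = 1$ while $\mu_K = 0$, I apply $\sigma^m$ to $\mu'$: the shifted sequence agrees with $\sigma^m(\mu)=\alpha$, and therefore with $\alpha'$, on the first $K-m-1$ positions, and at position $K-m$ it displays $\mu'_K = 1$ whereas $\alpha'_{K-m} = \alpha_{K-m} = \mu_K = 0$; hence $\sigma^m(\mu') \succ \alpha'$, violating membership in $\mathcal{V}'_2$. Symmetrically, if $\alpha'_K = 0$ while $\alpha_K = 1$, the shift $\sigma^n(\alpha')$ yields $\sigma^n(\alpha') \prec \mu'$, again contradicting $\mathcal{V}'_2$. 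The remaining two cases, $\mu'_K = 0$ with $\mu_K = 1$ and $\alpha'_K = 1$ with $\alpha_K = 0$, are handled by applying $\sigma^{p}$: using $\sigma^p(\mu) = \mu$ and the periodicity $\mu_K = \mu_{K-p}$, one gets $\sigma^p(\mu') \prec \mu'$, and analogously $\sigma^p(\alpha') \succ \alpha'$, each violating a defining inequality of $\mathcal{V}'_2$.

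The main bookkeeping obstacle is keeping the indices consistent: I must verify that $K > p$ (so that all shifted positions $K-m$, $K-n$, $K-p$ are well defined and fall within the range where $\mu',\alpha'$ still agree with $\mu,\alpha$) and that the agreement range of $(\mu',\alpha')$ with $(\mu,\alpha)$ is long enough to propagate to the shifted sequences. Both conditions are automatic once the initial agreement exceeds $p$, which we arrange by choosing $(\mu',\alpha')$ close enough to $(\mu,\alpha)$. Since every possible first deviation leads to a contradiction, no such perturbation exists, and $(\mu,\alpha)$ is isolated in $\mathcal{V}'_2$.
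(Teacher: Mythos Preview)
Your proof is correct and follows essentially the same approach as the paper: both invoke Lemma~\ref{l:V0} to obtain the periodic form $\mu=(\mu_1\cdots\mu_m\alpha_1\cdots\alpha_n)^\infty$, $\alpha=(\alpha_1\cdots\alpha_n\mu_1\cdots\mu_m)^\infty$, and then use the four defining inequalities of $\mathcal{V}'_2$ to force any nearby $(\mu',\alpha')\in\mathcal{V}'_2$ to coincide with $(\mu,\alpha)$. The only organisational difference is that the paper argues block by block (showing the next length-$(m{+}n)$ block of $\mu'$ and $\alpha'$ must equal the periodic block, then iterating), whereas you argue via the first index of disagreement and split into four cases according to which sequence deviates and in which direction; the underlying use of the shifts $\sigma^m$, $\sigma^n$, $\sigma^{m+n}$ is the same.
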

\begin{proof}
	We verify that if $(\mu', \alpha')\in \mathcal{V}'_2$ and $(\mu', \alpha')$ is close enough to $(\mu, \alpha)$, then  $(\mu', \alpha')=(\mu, \alpha)$.
	By \eqref{e:V1} and Lemma \ref{l:V0},  we have $$\mu=(\mu_1\cdots\mu_m\alpha_1\cdots\alpha_n)^\infty, \alpha=(\alpha_1\cdots\alpha_n\mu_1\cdots\mu_m)^\infty.$$ Suppose $\mu'$ begins with $(\mu_1\cdots\mu_m\alpha_1\cdots\alpha_n)^kx$ and $\alpha'$ begins with $=(\alpha_1\cdots\alpha_n\mu_1\cdots\mu_m)^ky$ for some large positive integer number $k$. Here $x, y\in\{0,1\}^{m+n}$.  Then $(\mu', \alpha')\in \mathcal{V}'_2$ implies $y\preccurlyeq\alpha_1\cdots\alpha_n\mu_1\cdots\mu_m$ and $x\preccurlyeq\mu_1\cdots\mu_m\alpha_1\cdots\alpha_n$.   Similarly, we have $x\succcurlyeq \mu_1\cdots\mu_m\alpha_1\cdots\alpha_n$ and $y\succcurlyeq\alpha_1\cdots\alpha_n\mu_1\cdots\mu_m$. Hence we obtain $x=\mu_1\cdots\mu_m\alpha_1\cdots\alpha_n$ and $y=\alpha_1\cdots\alpha_n\mu_1\cdots\mu_m$.
	By iterating this process infinitely, we obtain $\mu'=\mu$ and $\alpha'=\alpha$.
\end{proof}

\begin{lemma}\label{l:dense22}
	$\mathcal{V}'_2 \setminus \overline{\mathcal{U}'_2}$ is a countable set, dense in $\mathcal{V}'_2$.
\end{lemma}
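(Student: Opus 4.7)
The plan is to prove countability and density separately, using the structural results just established.

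\emph{Countability.} By Lemma \ref{l:U'} together with Lemma \ref{l: V'isolated}, every $(\mu, \alpha) \in \mathcal{V}'_2 \setminus \overline{\mathcal{U}'_2}$ must fall into case (ix) of Lemma \ref{l:V'}, i.e., there exist $m, n \in \mathbb{N}$ with $\sigma^m(\mu) = \alpha$ and $\sigma^n(\alpha) = \mu$. Lemma \ref{l:V0} then forces
$$\mu = (\mu_1 \cdots \mu_m \alpha_1 \cdots \alpha_n)^\infty, \qquad \alpha = (\alpha_1 \cdots \alpha_n \mu_1 \cdots \mu_m)^\infty,$$
so each element of $\mathcal{V}'_2 \setminus \overline{\mathcal{U}'_2}$ is encoded by a finite binary word of length $m+n$. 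The countability follows immediately.

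\emph{Density: construction.} Fix $(\mu, \alpha) \in \mathcal{V}'_2$. If $(\mu, \alpha)$ already lies in $\mathcal{V}'_2 \setminus \overline{\mathcal{U}'_2}$ there is nothing to show, so I would assume $(\mu, \alpha) \in \overline{\mathcal{U}'_2}$. Apply Lemma \ref{l:SE} to extract increasing indices $n_k \to \infty$ with $\mu_{n_k} = 1$ and the prefix property $\mu_{j+1}\cdots\mu_{n_k} \succcurlyeq \mu_1\cdots\mu_{n_k-j}$ for $1 \le j < n_k$, and dually increasing $m_k \to \infty$ with $\alpha_{m_k} = 0$ and $\alpha_{j+1}\cdots\alpha_{m_k} \preccurlyeq \alpha_1\cdots\alpha_{m_k-j}$. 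Define the periodic pairs
$$\mu^k := (\mu_1\cdots\mu_{n_k}\alpha_1\cdots\alpha_{m_k})^\infty, \qquad \alpha^k := (\alpha_1\cdots\alpha_{m_k}\mu_1\cdots\mu_{n_k})^\infty.$$
By construction $\sigma^{n_k}(\mu^k) = \alpha^k$ and $\sigma^{m_k}(\alpha^k) = \mu^k$, so Theorem \ref{t:main2}(iii) (now available from Lemmas \ref{l:U'} and \ref{l: V'isolated}) will place $(\mu^k, \alpha^k)$ in $\mathcal{V}'_2 \setminus \overline{\mathcal{U}'_2}$ as soon as $(\mu^k, \alpha^k) \in \mathcal{V}'_2$ is verified. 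Convergence to $(\mu, \alpha)$ in the metric \eqref{eq:distanceSym} is automatic, since $\mu^k$ coincides with $\mu$ on its first $n_k$ digits and $\alpha^k$ with $\alpha$ on its first $m_k$ digits.

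\emph{Main obstacle.} The principal technical point is verifying $(\mu^k, \alpha^k) \in \mathcal{V}'_2$, i.e., $\mu^k \preccurlyeq \sigma^i(\mu^k), \sigma^i(\alpha^k) \preccurlyeq \alpha^k$ for all $i$. By periodicity this reduces to $0 \le i < n_k + m_k$. For $0 \le i < n_k$, the bound $\sigma^i(\mu^k) \succcurlyeq \mu^k$ follows from the Lemma \ref{l:SE}-prefix inequality for $\mu$ combined with $\mu \preccurlyeq \alpha$, while $\sigma^i(\mu^k) \preccurlyeq \alpha^k$ follows from $\sigma^i(\mu) \preccurlyeq \alpha$, which holds since $(\mu,\alpha) \in \mathcal{V}'_2$. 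Indices $n_k \le i < n_k + m_k$ and the analogous inequalities for $\sigma^i(\alpha^k)$ are handled symmetrically using the $\alpha$-side selection and $\mu \preccurlyeq \sigma^j(\alpha)$. The delicate issue is ensuring the required $n_k, m_k$ can always be extracted; this will be split according to the nine cases of Lemma \ref{l:V'}, leaning on the explicit approximations from Lemma \ref{l:U'} in cases (i)--(viii), with case (ix) being trivial since the point already lies in the target set.
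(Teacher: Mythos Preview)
Your countability argument is fine and matches the paper. The density construction, however, has a genuine gap: the periodic pair $(\mu^k,\alpha^k)=\bigl((\mu_1\cdots\mu_{n_k}\alpha_1\cdots\alpha_{m_k})^\infty,(\alpha_1\cdots\alpha_{m_k}\mu_1\cdots\mu_{n_k})^\infty\bigr)$ need not lie in $\mathcal{V}'_2$. Take any $(\mu,\alpha)\in\mathcal{U}'_2$ with $\mu$ beginning $001$ and $\alpha$ beginning $110$ (for instance $\mu=\overline{\alpha}$ with $\alpha$ the Thue--Morse sequence $11010011\ldots$). Then $n_1=m_1=3$ are valid Lemma~\ref{l:SE} indices, and your recipe yields $\mu^1=(001110)^\infty$, $\alpha^1=(110001)^\infty$. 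But $\sigma^2(\mu^1)=(111000)^\infty\succ(110001)^\infty=\alpha^1$, so $(\mu^1,\alpha^1)\notin\mathcal{V}'_2$. The claimed justification ``$\sigma^i(\mu^k)\preccurlyeq\alpha^k$ follows from $\sigma^i(\mu)\preccurlyeq\alpha$'' breaks down precisely because $\mu^k$ and $\mu$ agree only on the first $n_k$ digits, and when $\mu_{i+1}\cdots\mu_{n_k}=\alpha_1\cdots\alpha_{n_k-i}$ the comparison is decided by the \emph{next} block, where $\mu^k$ continues with $\alpha_1\cdots$ rather than with $\mu_{n_k+1}\cdots$.

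The paper fixes this by first reducing to $(\mu,\alpha)\in\mathcal{U}'_2$ (using that $\mathcal{U}'_2$ is dense in $\overline{\mathcal{U}'_2}$, which also sidesteps the issue of applying Lemma~\ref{l:SE} to periodic $\mu$), and then building
\[
\mu^{n}:=(\mu_1\cdots\mu_n^-\,\alpha_1\cdots\alpha_m^+)^{\infty},\qquad
\alpha^{m}:=(\alpha_1\cdots\alpha_m^+\,\mu_1\cdots\mu_n^-)^{\infty},
\]
i.e.\ flipping the terminal digit of each block ($\mu_n=1\mapsto 0$, $\alpha_m=0\mapsto 1$). This single modification forces the borderline comparisons to become strict in the right direction: e.g.\ $\mu_{i+1}\cdots\mu_n^-\prec\alpha_1\cdots\alpha_{n-i}$ holds outright, so the problematic continuation never arises. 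Your construction is salvageable, but only with this digit flip; without it the argument fails.
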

\begin{proof}
	If $(\mu,\alpha)\in\mathcal{V}'_2 \setminus \overline{\mathcal{U}'_2}$, we know from Lemmas \ref{l:U'} and \ref{l: V'isolated} that $\mu$ and $\alpha$ are periodic, which leads to $\mathcal{V}'_2 \setminus \overline{\mathcal{U}'_2}$  being a countable set.
	
	Note that $\mathcal{U}'_2$ is dense in $\overline{\mathcal{U}'_2}$. Then from the transitivity of density, to prove  $\mathcal{V}'_2 \setminus \overline{\mathcal{U}'_2}$ is dense in
	$\mathcal{V}'_2=(\mathcal{V}'_2 \setminus \overline{\mathcal{U}'_2})\cup\overline{\mathcal{U}'_2}$, \color{black} it suffices to prove that  $\mathcal{U}'_2$ belongs to the closure of $\mathcal{V}'_2 \setminus \overline{\mathcal{U}'_2}$\color{black} . 
	We  show the following for any $(\mu, \alpha)\in\mathcal{U}_2'$, there exist $(\mu^n, \alpha^m)\in \mathcal{V}'_2\setminus\overline{\mathcal{U}'_2}$ such that $(\mu^n, \alpha^m)\rightarrow (\mu, \alpha)\text{ as }n\rightarrow \infty, m\rightarrow \infty$.
	
	Set $(\mu,\alpha)=((\mu_i),(\alpha_i))\in{\mathcal{U}'_2}$. Define:
	\begin{equation*}
		\begin{split}	
			\mu^{n}:=(\mu_1\mu_2\cdots\mu_n^-\alpha_1\alpha_2\cdots\alpha_m^+)^{\infty},\alpha^{m}:=(\alpha_1\alpha_2\cdots\alpha_m^+\mu_1\mu_2\cdots\mu_n^-)^{\infty}.
		\end{split}
	\end{equation*}
	Here $n$ and $m$ satisfy the inequalities in Lemma \ref{l:SE}. 
	It is clear that  $(\mu^{n}, \alpha^{m})\rightarrow (\mu, \alpha)$ when $n\rightarrow \infty, m\rightarrow \infty$. Now we verify $(\mu^{n}, \alpha^{m})\in \mathcal{V}'_2\setminus\overline{\mathcal{U}'_2}$.
	Since $\mu^{n}$ and $\alpha^{m}$ are periodic, we only need to verify $$\mu^n\prec\sigma^i(\mu^n)\preccurlyeq \alpha^m,\;\; \mu^n\preccurlyeq\sigma^i(\alpha^m)\prec\alpha^m$$ hold for all $1\le i\le{n}+{m}-1$. We only show the first inequality, the second one can be shown similarly.

	Assume that $n>m$ without loss of generality, by Lemma \ref{l:SE} and $(\mu,\alpha)\in{\mathcal{U}'_2}$, we have
	\begin{equation}\label{e:de1}
		\begin{split}
			&\mu_1\cdots\mu_{n-i}\;\;\mu_{n-i+1}\cdots\mu_{n}^-\prec
			\mu_{i+1}\cdots\mu_{n}^-\;\;\alpha_1\cdots\alpha_{i},\;\;\;\;\;\;\;\qtq{if}1\le i\le m-1,\\
			&	\mu_1\cdots\mu_{n-i}\;\;\mu_{n-i+1}\cdots\mu_{n-i+m}\prec
			\mu_{i+1}\cdots\mu_{n}^-\;\;\alpha_1\cdots\alpha_{m}^+,\qtq{if}m\le i\le n-1,\\
			&\mu_1\cdots\mu_{m-j}\prec\alpha_{j+1}\cdots\alpha_{m}^+\qtq{for}0\leq j\leq m-1,i=n+j,\qtq{if} n\le i\le n+m-1.
		\end{split}
	\end{equation}
	\eqref{e:de1} means $\mu^{n}\prec \sigma^i(\mu^{n})$. $\sigma^i(\mu^{n})\preccurlyeq \alpha^{m}$ follows from \eqref{e:de2} below and  $\sigma^n(\mu^{n})=\alpha^{m}$.
	\begin{equation}\label{e:de2}
		\begin{split}
			&\mu_{i+1}\cdots\mu_{m+i}\prec\alpha_1\cdots\alpha_{m}^+\;\;\;\;\;\;\;\;\;\qtq{if}0\leq i\leq n-m,\\
			&\mu_{i+1}\cdots\mu_{n}^-\prec\alpha_1\cdots\alpha_{n-i}\;\;\;\;\;\;\;\;\;\qtq{if}n-m+1\leq i\leq n-1,\\
			&\alpha_{j+1}\cdots\alpha_{m}^+\mu_1\cdots\mu_j\prec\alpha_1\cdots\alpha_{m}^+\qtq{if}1\leq j\leq m-1.\\
		\end{split}
	\end{equation}
	Therefore, $\mathcal{V}'_2\setminus\overline{\mathcal{U}'_2}$ is dense in $\mathcal{V}'_2$.
\end{proof}

\begin{lemma}\label{l:U'cantor}
	$\overline{\mathcal{U}'_2}$ is a Cantor set.
\end{lemma}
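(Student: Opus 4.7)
The plan is to verify the standard topological characterization of a Cantor set: $\overline{\mathcal{U}'_2}$ is a non-empty, compact, metrizable space that is totally disconnected and perfect. Three of these properties are essentially free in our setting. Non-emptiness follows because $\mathcal{U}'_2$ already contains uncountably many points (the sets ${\mathcal{U}'_2}^N$ of Section \ref{sec:4} are non-empty). Compactness follows because $\overline{\mathcal{U}'_2}$ is a closed subset of the compact product $\Sigma_2 \times \Sigma_2$ (equivalently, of the compact set $\mathcal{V}'_2$ from Lemma \ref{l:V'compact}). Metrizability is inherited from \eqref{eq:distanceSym}, and total disconnectedness is inherited from $\Sigma_2 \times \Sigma_2$, which is a product of two totally disconnected spaces under the metric induced by $d$ in \eqref{eq:distance}.

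The substantive task is to show $\overline{\mathcal{U}'_2}$ is perfect, i.e. has no isolated points. I would split into two cases according to whether the candidate isolated point lies in $\overline{\mathcal{U}'_2} \setminus \mathcal{U}'_2$ or in $\mathcal{U}'_2$. For $(\mu,\alpha) \in \overline{\mathcal{U}'_2} \setminus \mathcal{U}'_2$, the definition of closure directly supplies a sequence in $\mathcal{U}'_2$ converging to it, and these approximants are eventually distinct from $(\mu,\alpha)$, so $(\mu,\alpha)$ is a limit point. The genuine work lies in the case $(\mu,\alpha) \in \mathcal{U}'_2$. I would first observe that both $\mu$ and $\alpha$ must be aperiodic: if $\mu$ had period $p$, then $\sigma^p(\mu) = \mu$, contradicting the strict inequality $\mu \prec \sigma^p(\mu)$ built into the definition of $\mathcal{U}'_2$, and the same argument rules out periodicity of $\alpha$. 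Aperiodicity combined with $\sigma^j(\mu) \succ \mu$ for all $j \geq 1$ allows me to apply Lemma \ref{l:SE}(i) with $(x_i) = \mu$, producing indices $1 < n_1 < n_2 < \cdots$ with the stated lexicographic property. Setting
\[
\mu^k := (\mu_1 \cdots \mu_{n_k})(\mu_1 \cdots \mu_{n_{k+1}})^\infty,
\]
the estimates already carried out in Case (ii) of Lemma \ref{l:U'} yield $\mu^k \prec \sigma^i(\mu^k) \prec \alpha$ for all $i \geq 1$. The inequality $\mu^k \prec \mu$ together with the strict bound $\mu \prec \sigma^j(\alpha) \prec \alpha$ inherited from $(\mu,\alpha) \in \mathcal{U}'_2$ then gives $\mu^k \prec \sigma^j(\alpha) \prec \alpha$, so $(\mu^k, \alpha) \in \mathcal{U}'_2 \subseteq \overline{\mathcal{U}'_2}$. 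Convergence $(\mu^k, \alpha) \to (\mu, \alpha)$ in the metric \eqref{eq:distanceSym} is immediate because $\mu^k$ and $\mu$ agree at least on the first $n_k$ positions and $n_k \to \infty$; and $\mu^k \neq \mu$ because $\mu^k$ is eventually periodic while $\mu$ is not. Hence $(\mu,\alpha)$ is a limit point of $\overline{\mathcal{U}'_2}$.

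The main obstacle is arranging the approximants $(\mu^k, \alpha)$ to sit inside $\mathcal{U}'_2$ (so that the strict defining inequalities are preserved) rather than just inside $\overline{\mathcal{U}'_2}$. This obstacle is already overcome by the lexicographic bookkeeping in Lemma \ref{l:U'} Case (ii), which I would invoke directly rather than redo. The only genuinely new ingredient beyond the machinery already developed is the aperiodicity observation for elements of $\mathcal{U}'_2$: it both justifies the appeal to Lemma \ref{l:SE}(i) and guarantees that the approximating sequence consists of points distinct from the target, delivering the limit-point property required for perfectness.
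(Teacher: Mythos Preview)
Your argument is essentially correct and close in spirit to the paper's, but there is one small slip worth flagging. From $\mu \prec \sigma^p(\mu)$ for every $p \ge 1$ you correctly conclude that $\mu$ is not \emph{purely} periodic; this does not, however, rule out $\mu$ being \emph{eventually} periodic. For instance $(\mu,\alpha)=\bigl(00(10)^\infty,\,11(01)^\infty\bigr)$ lies in $\mathcal{U}'_2$, yet $\mu$ is eventually periodic. Hence the clause ``$\mu^k$ is eventually periodic while $\mu$ is not'' is unjustified as stated. The desired conclusion $\mu^k\neq\mu$ is nevertheless correct, since the very estimates from Lemma~\ref{l:U'} Case~(ii) that you invoke already give the strict inequality $\mu^k\prec\mu$; you should cite that instead.

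On the comparison with the paper: the paper works with the criterion ``non-empty, compact, no isolated points, no interior points''. Compactness and the absence of isolated points are obtained exactly as you do (via Lemma~\ref{l:V'compact} and the approximation constructions underlying Lemma~\ref{l:U'}), while the absence of interior points is deduced from Lemma~\ref{l:dense22}, which shows that $\mathcal{V}'_2\setminus\overline{\mathcal{U}'_2}$ is dense in $\mathcal{V}'_2$. Your route replaces this last step by the observation that total disconnectedness is inherited for free from the ambient product $\Sigma_2\times\Sigma_2$; this is cleaner, since it avoids any appeal to Lemma~\ref{l:dense22}. You are also more explicit than the paper about the perfectness argument for points already in $\mathcal{U}'_2$, which the paper handles only by a passing reference to the proof of Lemma~\ref{l:U'}.
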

\begin{proof}
	Recall that a Cantor set is a non-empty compact set having neither isolated, nor interior points. We conclude that $\overline{\mathcal{U}'_2}$ has no isolated points by the proofs of Lemmas  \ref{l:V'compact} and \ref{l:U'}. Lemma \ref{l:dense22} implies that $\overline{\mathcal{U}'_2}$ has no interior points.
\end{proof}
\color{black}
To prove $\overline{\mathcal{U}'_2} \setminus \mathcal{U}'_2$ is dense in $\overline{\mathcal{U}'_2}$, we need the following lemma.
\begin{lemma}\label{l:Uldense}
	Suppose
	$(\mu, \alpha)=((\mu_i), (\alpha_i))\in \mathcal{V}'_2$.
	\begin{enumerate}[\upshape (i)]
		\item  	If $\sigma^j(\mu)\succ \mu$ for all $j\in\mathbb{N}$, then  $\mu^{n_j}=(\mu_1\cdots\mu_{n_j})^\infty\neq 0^\infty$ satisfying
		$$\mu^{n_j}\nearrow\mu \qtq{and} \mu^{n_j}\preccurlyeq\sigma^i(\mu^{n_j})\prec\alpha \qtq{for all} i\in\mathbb{N}_0.$$ Here $n_1<n_2<\cdots$ is  given as in Lemma \ref{l:SE}.
		\item  	If $\sigma^j(\alpha)\prec\alpha$ for all $j\in\mathbb{N}$, then $\alpha^{m_k}=(\alpha_1\cdots\alpha_{m_k})^\infty\neq 1^\infty$ satisfying
		$$\alpha^{m_k}\searrow\alpha \qtq{and} \mu\prec\sigma^i(\alpha^{m_k})\preccurlyeq\alpha^{m_k} \qtq{for all} i\in\mathbb{N}_0.$$  Here $m_1<m_2<\cdots$ are  given as in Lemma \ref{l:SE}.
		\item  $\mu^{n_j}=(\mu_1\cdots\mu_{n_j})^\infty\neq 0^\infty$,	 $\alpha^{m_k}=(\alpha_1\cdots\alpha_{m_k})^\infty\neq 1^\infty$ satisfying$$ \mu^{n_j}\preccurlyeq\sigma^i(\mu^{n_j})\prec\alpha^{m_k}
		\qtq{and}  \mu^{n_j}\prec\sigma^i(\alpha^{m_k})\preccurlyeq\alpha^{m_k}\qtq{for all} i\in\mathbb{N}_0.$$
	\end{enumerate}
\end{lemma}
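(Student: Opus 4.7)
The plan is to handle parts (i) and (ii) by dual constructions based on the two halves of Lemma~\ref{l:SE}, and then deduce part (iii) by chaining the inequalities with the ambient assumption $(\mu,\alpha)\in\mathcal{V}'_2$. For (i), the hypothesis $\sigma^j(\mu)\succ\mu$ for every $j\in\mathbb{N}$ is exactly what is needed to apply Lemma~\ref{l:SE}(i) with both sequences taken equal to $\mu$; this yields indices $1<n_1<n_2<\cdots$ with $\mu_{n_j}=1$ and the key domination
\[
\mu_{k+1}\cdots\mu_{n_j}\succ\mu_1\cdots\mu_{n_j-k}\qquad\text{for all }1\le k<n_j.
\]
I would then set $\mu^{n_j}:=(\mu_1\cdots\mu_{n_j})^\infty$, which satisfies $\mu^{n_j}\neq 0^\infty$ because $\mu_{n_j}=1$.

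Three verifications remain. First, $\mu^{n_j}\preccurlyeq\sigma^i(\mu^{n_j})$ for every $i$ reduces by periodicity to $1\le i<n_j$, where a direct application of the key inequality with $k=i$ actually gives strict inequality. Second, $\mu^{n_j}\nearrow\mu$ splits into $\mu^{n_j}\prec\mu^{n_{j+1}}$ and $\mu^{n_j}\prec\mu$: the former follows by comparing the first $n_{j+1}$ digits of the two periodic sequences and invoking the key inequality with $k=n_j$ and the $(j{+}1)$-st index playing the role of $n_i$; the latter follows because $\sigma^{n_j}(\mu)\succ\mu$ forces a first index $p\ge 1$ with $\mu_{n_j+p}>\mu_p$, so that $\mu^{n_j}$ and $\mu$ first disagree at position $n_j+p$ with $\mu$ being larger. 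Third, $\sigma^i(\mu^{n_j})\prec\alpha$: I would compare $\sigma^i(\mu^{n_j})$ with $\sigma^i(\mu)$, whose first $n_j-i$ digits both equal $\mu_{i+1}\cdots\mu_{n_j}$; after that, $\sigma^i(\mu^{n_j})$ restarts with $\mu_1\mu_2\cdots$ while $\sigma^i(\mu)$ continues with $\sigma^{n_j}(\mu)$, so the same first-difference argument yields $\sigma^i(\mu^{n_j})\prec\sigma^i(\mu)\preccurlyeq\alpha$, the last inequality being the assumption $(\mu,\alpha)\in\mathcal{V}'_2$.

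Part (ii) is the exact dual: apply Lemma~\ref{l:SE}(ii) with $(x_i)=(\alpha_i)$ to obtain $m_1<m_2<\cdots$ with $\alpha_{m_k}=0$ and the dual key domination, and set $\alpha^{m_k}:=(\alpha_1\cdots\alpha_{m_k})^\infty\neq 1^\infty$; the same three verifications, mirrored, deliver $\alpha^{m_k}\searrow\alpha$ together with $\mu\prec\sigma^i(\alpha^{m_k})\preccurlyeq\alpha^{m_k}$. For part (iii), assuming both hypotheses of (i) and (ii) hold, I would take the $\mu^{n_j}$ and $\alpha^{m_k}$ already built and simply chain inequalities: from (i) and (ii) one gets $\sigma^i(\mu^{n_j})\prec\alpha\preccurlyeq\alpha^{m_k}$ and $\mu^{n_j}\preccurlyeq\mu\prec\sigma^i(\alpha^{m_k})$, while the remaining pieces $\mu^{n_j}\preccurlyeq\sigma^i(\mu^{n_j})$ and $\sigma^i(\alpha^{m_k})\preccurlyeq\alpha^{m_k}$ transfer unchanged from (i) and (ii).

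The main obstacle is ensuring strictness in $\sigma^i(\mu^{n_j})\prec\alpha$. The naive chain $\sigma^i(\mu^{n_j})\preccurlyeq\sigma^i(\mu)\preccurlyeq\alpha$ is not enough, because $\sigma^i(\mu)=\alpha$ is permitted for $(\mu,\alpha)\in\mathcal{V}'_2$. The remedy is the strict comparison $\sigma^i(\mu^{n_j})\prec\sigma^i(\mu)$, obtained by exploiting the fact that the periodic approximant restarts the block $\mu_1\cdots\mu_{n_j}$ exactly where the true sequence continues with the strictly larger tail $\sigma^{n_j}(\mu)$; tracking the first position of disagreement converts the hypothesis $\sigma^{n_j}(\mu)\succ\mu$ into the strict inequality required by the conclusion, and an analogous argument handles the dual strictness in part (ii).
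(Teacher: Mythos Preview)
Your proposal is correct and follows the same overall architecture as the paper: apply Lemma~\ref{l:SE} to obtain the indices $n_j$ (resp.\ $m_k$), form the periodic approximants, and verify the four required inequalities. The one tactical difference worth noting is in the proof that $\sigma^i(\mu^{n_j})\prec\alpha$: the paper argues by contradiction, writing $n_{j+1}=\ell n_j+s$ and comparing $\sigma^t(\mu^{n_j})$ against $\alpha$ over a window of length $n_{j+1}-t$ to force $\sigma^t(\mu)\succ\alpha$, whereas you establish the sharper intermediate inequality $\sigma^i(\mu^{n_j})\prec\sigma^i(\mu)$ directly via the first-difference position. Your route is slightly cleaner and avoids the division-with-remainder bookkeeping. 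One place where your sketch is thinner than the paper is the monotonicity $\mu^{n_j}\prec\mu^{n_{j+1}}$: a single use of the key inequality with $k=n_j$ compares $\mu_{n_j+1}\cdots\mu_{n_{j+1}}$ against $\mu_1\cdots\mu_{n_{j+1}-n_j}$, not against the repeated block $(\mu_1\cdots\mu_{n_j})^{\ell-1}\mu_1\cdots\mu_s$, so when $n_{j+1}>2n_j$ you need either the paper's iteration (their display \eqref{e:app_ex3}) or, equivalently within your framework, the observation that the first position at which $\mu^{n_j}$ and $\mu$ differ is $n_j+p$ with $p\le n_{j+1}-n_j$, hence lies within the first $n_{j+1}$ coordinates where $\mu^{n_{j+1}}$ still coincides with $\mu$.
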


\begin{proof}
	(i) First, we show  $\mu^{n_j}\prec\mu^{n_{j+1}}$.  Note by Lemma  \ref{l:SE} we have  
	\begin{equation}\label{e:app_ex1}
		\mu_1\cdots \mu_{n_{j+1}-m}\prec\mu_{m+1}\cdots\mu_{n_{j+1}}\qtq{for all} 1\le m\le n_{j+1}-1
	\end{equation}
	and
	\begin{equation}\label{e:app_ex2}
		\mu_1\cdots \mu_{n_{j}-m}\prec\mu_{m+1}\cdots\mu_{n_{j}}\qtq{for all} 1\le m\le n_{j}-1.
	\end{equation}
	Let $n_{j+1}=\ell n_j+s$ with $\ell\in\mathbb{N}$ and $1\le s\le n_j$. Then  by \eqref{e:app_ex1}, we have
	\begin{equation}\label{e:app_ex3}
		\mu_1\cdots\mu_{n_j}\mu_{n_j+1}\cdots\mu_{n_{j+1}}\succ\mu_1\cdots\mu_{n_j}\mu_1\cdots \mu_{(l-1)n_j+s}\succcurlyeq (\mu_1\cdots\mu_{n_j})^{\ell}\mu_1\cdots\mu_{s}
	\end{equation}
	This implies $\mu^{n_j}\prec\mu^{n_{j+1}}$. Similarly as \eqref{e:app_ex3}, we have  $\mu^{n_{j+1}}\prec\mu$. Moreover, we have  $\mu^{n_j}\preccurlyeq\sigma^i(\mu^{n_j})$ for all $i\in\mathbb{N}_0$ by \eqref{e:app_ex2}, then it remains to prove $\sigma^i(\mu^{n_j})\prec\alpha$ for all $i\in\mathbb{N}_0$.
	\color{black}
	Assume that there exist an integer $t$ such that $\sigma^t(\mu^{n_j})\succcurlyeq\alpha$ with $1\leq t \leq n_j$, then  we have
	\begin{equation}\label{e:app_ex4}
		\alpha_1\alpha_2\cdots\alpha_{n_{j+1}-t}\preccurlyeq\mu_{t+1}\cdots\mu_{n_j}(\mu_1\cdots\mu_{n_j})^{\ell-1}\mu_1\cdots\mu_{s}
		\qtq{for}n_{j+1}=\ell n_j+s.  \end{equation}
	with $\ell\ge1$ and $1\le s\le n_j$ in the above inequality. Then by  \eqref{e:app_ex3} and  \eqref{e:app_ex4}, we have $\sigma^t(\mu)\succ\alpha$ for some  integer $t$. This  contradicts with our condition $\sigma^j(\mu)\preccurlyeq \alpha$ for all $j\in\mathbb{N}_0$.
	Therefore, $\sigma^i(\mu^{n_j})\prec\alpha$ for all $i\in\mathbb{N}_0$.	
	
	(ii) can be proved symmetrically.
	
	(iii) It follows directly from  (i) and (ii) .
\end{proof}

\begin{lemma}\label{l:dense-coutable}
	$\overline{\mathcal{U}'_2} \setminus \mathcal{U}'_2$ is dense in $\overline{\mathcal{U}'_2}$.
\end{lemma}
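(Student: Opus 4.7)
The plan is to reduce to approximating points of $\mathcal{U}'_2$ itself. By definition $\mathcal{U}'_2$ is dense in $\overline{\mathcal{U}'_2}$, so by transitivity of density it suffices to show that every $(\mu,\alpha)\in\mathcal{U}'_2$ is a limit (in the metric \eqref{eq:distanceSym}) of a sequence of points lying in $\overline{\mathcal{U}'_2}\setminus\mathcal{U}'_2$. Points of $\mathcal{V}'_2\setminus\overline{\mathcal{U}'_2}$ are, by Lemma \ref{l: V'isolated}, isolated in $\mathcal{V}'_2$, so the approximating points I build must be genuinely in $\overline{\mathcal{U}'_2}$ and fail the strict inequalities defining $\mathcal{U}'_2$ only in a ``mild'' way; periodic truncations are the natural candidates.

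Fix $(\mu,\alpha)\in\mathcal{U}'_2$, so in particular $\sigma^j(\mu)\succ\mu$ and $\sigma^j(\alpha)\prec\alpha$ for every $j\in\mathbb{N}$. Apply Lemma \ref{l:Uldense}(iii) to obtain periodic sequences
\[
\mu^{n_j}=(\mu_1\cdots\mu_{n_j})^\infty\neq 0^\infty,\qquad \alpha^{m_k}=(\alpha_1\cdots\alpha_{m_k})^\infty\neq 1^\infty,
\]
with $\mu^{n_j}\nearrow\mu$, $\alpha^{m_k}\searrow\alpha$, and
\[
\mu^{n_j}\preccurlyeq\sigma^i(\mu^{n_j})\prec\alpha^{m_k},\qquad \mu^{n_j}\prec\sigma^i(\alpha^{m_k})\preccurlyeq\alpha^{m_k}\qquad\text{for all }i\in\mathbb{N}_0.
\]
This is exactly case (viii) of Lemma \ref{l:V'}, so Lemma \ref{l:U'} gives $(\mu^{n_j},\alpha^{m_k})\in\overline{\mathcal{U}'_2}$. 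On the other hand, by periodicity, $\sigma^{n_j}(\mu^{n_j})=\mu^{n_j}$ and $\sigma^{m_k}(\alpha^{m_k})=\alpha^{m_k}$, so the strict chain of inequalities defining $\mathcal{U}'_2$ fails; hence $(\mu^{n_j},\alpha^{m_k})\in\overline{\mathcal{U}'_2}\setminus\mathcal{U}'_2$. Sending $j,k\to\infty$ gives $(\mu^{n_j},\alpha^{m_k})\to(\mu,\alpha)$ in $\Sigma_2\times\Sigma_2$, completing the approximation.

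The main potential obstacle is making sure the periodic approximants $(\mu^{n_j},\alpha^{m_k})$ actually lie in $\overline{\mathcal{U}'_2}$ rather than only in $\mathcal{V}'_2$ — if the weak inequalities collapsed in a way that made both $\sigma^{m}(\mu^{n_j})=\alpha^{m_k}$ and $\sigma^{n}(\alpha^{m_k})=\mu^{n_j}$, then by Lemma \ref{l: V'isolated} the pair would be an isolated point of $\mathcal{V}'_2$ and could not be approximated. This is precisely why case (viii) of Lemma \ref{l:V'} is the right case to land in: the inequalities $\sigma^i(\mu^{n_j})\prec\alpha^{m_k}$ and $\mu^{n_j}\prec\sigma^i(\alpha^{m_k})$ are strict, so the degenerate identification forbidden in Lemma \ref{l: V'isolated} cannot occur, and Lemma \ref{l:U'} guarantees membership in $\overline{\mathcal{U}'_2}$. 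A small bookkeeping point is to check that the $n_j,m_k$ furnished by Lemma \ref{l:SE}/Lemma \ref{l:Uldense} indeed produce $\mu^{n_j}\neq 0^\infty$ and $\alpha^{m_k}\neq 1^\infty$, which is immediate since $(\mu,\alpha)\in\Sigma_2^0\times\Sigma_2^1$ and the truncations inherit a $1$ (respectively a $0$) in their periods.
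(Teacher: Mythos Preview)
Your proof is correct and follows essentially the same route as the paper: reduce to approximating points of $\mathcal{U}'_2$, then invoke Lemma \ref{l:Uldense}(iii) to obtain the periodic pairs $(\mu^{n_j},\alpha^{m_k})$ landing in case (viii) of Lemma \ref{l:V'}, hence in $\overline{\mathcal{U}'_2}\setminus\mathcal{U}'_2$. The paper's proof is a one-line appeal to Lemma \ref{l:Uldense}; your version simply spells out the details (the case (viii) identification, why the strict inequalities of $\mathcal{U}'_2$ fail, and the non-degeneracy $\mu^{n_j}\neq 0^\infty$, $\alpha^{m_k}\neq 1^\infty$, which really comes from $\mu_{n_j}=1$ and $\alpha_{m_k}=0$ in Lemma \ref{l:SE}) that the paper leaves implicit.
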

\begin{proof}
	We prove  $\overline{\mathcal{U}'_2} \setminus \mathcal{U}'_2$ is dense in $\overline{\mathcal{U}'_2}$ by  showing for any $(\mu, \alpha)\in
	\mathcal{U}_2'$, there exist $(\mu^k, \alpha^j)\in \overline{\mathcal{U}'_2} \setminus \mathcal{U}'_2$ and $(\mu^k, \alpha^j)\rightarrow (\mu, \alpha)$ when
	$k\rightarrow \infty, j\rightarrow \infty$. It is true by Lemma \ref{l:Uldense}.
\end{proof}

\begin{proof}[Proof of (i)-(vi) of  Theorem \ref{t:main2}]  It
	follows from Lemmas \ref{l:V'}-\ref{l:U'cantor} and Lemma \ref{l:dense-coutable}.
\end{proof}

\section{Proof of Theorem \ref{t:main2} (vii) , Theorem \ref{t:main3}   and  Theorem \ref{t:main4} (i)-(iv) }
\label{sec:6}
In this section, we will prove  Theorem \ref{t:main2} (vii) , Theorem \ref{t:main3}   and  Theorem \ref{t:main4} (i)-(iv).

Recall the definitions of  $\lambda(q_0, q_1)$, $\mu(q_0, q_1)$, $\beta(q_0, q_1)$ and $\alpha(q_0, q_1)$   given in the Introduction.
The following Lemma \ref{l:LRContinous} states that if $(q_0, q_1)$ and  $(q'_0, q'_1)$ are close enough, then either the greedy or  quasi-greedy expansions of $r_{q_0, q_1}$ and $r_{q'_0, q'_1}$ begin with  the same long length words. Similar results also hold for $\ell_{q_0, q_1}$  and $\ell_{q'_0, q'_1}$.
\begin{lemma}\label{l:LRContinous}
	\color{black}Let $(q_0, q_1), (q'_0, q'_1)\in\{(q_0, q_1)\in(1,\infty)^2:q_0+q_1\ge q_0q_1\}$.
	For any given positive integers $M$ and $N$, there exists a positive number $\epsilon$ such that if $d_{\mathbb{R}^2}((q_0, q_1), (q'_0, q'_1))<\epsilon$\color{black}, then we have
	\begin{enumerate}[\upshape (i)]
		\item
		\begin{equation*}
			\begin{split}
				&\beta_1(q'_0, q'_1)\cdots \beta_N(q'_0, q'_1)=\beta_1(q_0,q_1)\cdots \beta_N(q_0,q_1)\text{ if } \alpha(q'_0, q'_1)\succcurlyeq\alpha(q_0, q_1),\\
				&\alpha_1(q'_0, q'_1)\cdots \alpha_N(q'_0, q'_1)= \alpha_1(q_0,q_1)\cdots \alpha_N(q_0,q_1) \text{ if } \alpha(q'_0, q'_1)\prec\alpha(q_0, q_1).
			\end{split}	
		\end{equation*}	
		\item
		\begin{equation*}
			\begin{split}
				&	\lambda_1(q'_0, q'_1)\cdots \lambda_{M}(q'_0, q'_1)=\lambda_1(q_0,q_1)\cdots \lambda_{M}(q_0,q_1)\text{ if }
				\mu(q'_0, q'_1)\preccurlyeq\mu(q_0, q_1),\\
				&	\mu_1(q'_0, q'_1)\cdots \mu_{M}(q'_0, q'_1)= \mu_1(q_0,q_1)\cdots \mu_{M}(q_0,q_1) \text{ if }
				\mu(q'_0, q'_1)\succ\mu(q_0, q_1).
			\end{split}	
		\end{equation*}
	\end{enumerate}
\end{lemma}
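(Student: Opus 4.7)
}
I will prove the lemma by analyzing each of the four algorithms (greedy, quasi-greedy, lazy, quasi-lazy) position by position. I will sketch assertion (i); part (ii) is entirely analogous, with $(\beta,\alpha,r)$ replaced by $(\lambda,\mu,\ell)$ and \eqref{eq:qgreedy2} replaced by \eqref{eq:qlazy}. Write $\beta = (\beta_i) = \beta(q_0,q_1)$ and $\alpha = (\alpha_i) = \alpha(q_0,q_1)$.

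The plan for the first assertion of (i) is an induction on the position $n \le N$: assuming $\beta_k(q'_0,q'_1) = \beta_k$ for all $k < n$, I show $\beta_n(q'_0,q'_1) = \beta_n$ provided $(q'_0,q'_1)$ is sufficiently close to $(q_0,q_1)$ and $\alpha(q'_0,q'_1) \succcurlyeq \alpha$. The greedy algorithm gives $\beta_n = 1$ iff the continuous function
\[
F_n(q_0,q_1) := \pi_{q_0,q_1}(\beta_1\cdots\beta_{n-1}10^\infty) - r_{q_0,q_1}
\]
satisfies $F_n(q_0,q_1) \le 0$, and $\beta_n = 0$ iff $F_n(q_0,q_1) > 0$. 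Whenever $F_n(q_0,q_1) \ne 0$, the strict sign of $F_n$ propagates to a neighborhood by continuity, so $\beta_n(q'_0,q'_1) = \beta_n$, and a separate continuity argument shows the subsequent zeros of a finite greedy expansion cannot flip to $1$ for $(q'_0,q'_1)$ close enough (since the residual $r_{q'_0,q'_1} - \pi_{q'_0,q'_1}(\beta_1\cdots\beta_n 0^\infty)$ is small, while the threshold to place a $1$ at the next position is bounded below).

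The only delicate case is $F_n(q_0,q_1) = 0$ for some $n \le N$, which means the greedy expansion terminates at that position: $\beta(q_0,q_1) = \beta_1\cdots\beta_{n-1}10^\infty$. By \eqref{eq:qgreedy2} this forces $\alpha(q_0,q_1) = (\beta_1\cdots\beta_{n-1}0)^\infty$. In this case I will combine the hypothesis $\alpha(q'_0,q'_1) \succcurlyeq (\beta_1\cdots\beta_{n-1}0)^\infty$ with the value–lex monotonicity available for quasi-greedy (shift-admissible) sequences: from $\pi_{q'_0,q'_1}(\alpha(q'_0,q'_1)) = r_{q'_0,q'_1}$ and the lex comparison, deduce the value inequality $\pi_{q'_0,q'_1}((\beta_1\cdots\beta_{n-1}0)^\infty) \le r_{q'_0,q'_1}$. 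Using the algebraic identity $\pi_{q_0,q_1}(\beta_1\cdots\beta_{n-1}10^\infty) = \pi_{q_0,q_1}((\beta_1\cdots\beta_{n-1}0)^\infty)$ valid at the base point, together with a geometric-series rewriting of the periodic tail, I can then compare the two expressions at $(q'_0,q'_1)$ and conclude $F_n(q'_0,q'_1) \le 0$, i.e.\ $\beta_n(q'_0,q'_1) = 1 = \beta_n$.

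The second assertion of (i), under $\alpha(q'_0,q'_1) \prec \alpha(q_0,q_1)$, is handled symmetrically: the quasi-greedy algorithm uses strict inequality, so at each position $n$ the digit $\alpha_n$ is determined by the sign of $\pi_{q_0,q_1}(\alpha_1\cdots\alpha_{n-1}10^\infty) - r_{q_0,q_1}$, and the same induction shows $\alpha_n(q'_0,q'_1) = \alpha_n$ for $n \le N$; the strict-inequality convention makes this the ``left-continuous'' case. Part (ii) proceeds exactly as above with the quasi-lazy/lazy replacements and \eqref{eq:qlazy}. The principal obstacle in the whole argument is the equality case $F_n(q_0,q_1) = 0$: converting the lexicographic hypothesis on $\alpha$ (or $\mu$) into the required value-level inequality at the perturbed base requires exploiting precisely the structure provided by \eqref{eq:qgreedy2} (or \eqref{eq:qlazy}) relating the finite greedy and the periodic quasi-greedy at the critical point.
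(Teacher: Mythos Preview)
Your overall strategy---perturb finitely many strict inequalities from the algorithm, and invoke the hypothesis on $\alpha$ (resp.\ $\mu$) precisely when the greedy (resp.\ lazy) expansion terminates---matches the paper's. The paper organises this globally rather than as an induction: it writes down all the strict greedy inequalities at once to get $\beta_1(q')\cdots\beta_N(q')\preccurlyeq\beta_1(q)\cdots\beta_N(q)$, and separately uses $\alpha(q')\succcurlyeq\alpha(q)$ for the reverse inequality. But the logical content is the same except in the delicate case.

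In the terminating case $F_n(q_0,q_1)=0$, your argument has a real gap. You first want to pass from the lexicographic hypothesis $\alpha(q')\succcurlyeq(\beta_1\cdots\beta_{n-1}0)^\infty$ to the value inequality $\pi_{q'_0,q'_1}((\beta_1\cdots\beta_{n-1}0)^\infty)\le r_{q'_0,q'_1}$; this is plausible but needs a lex-to-value monotonicity for quasi-greedy sequences in the \emph{double-base} system, which the paper does not state and which you do not prove. More seriously, even granting that inequality, your next step does not follow: the identity $\pi_{q_0,q_1}(\beta_1\cdots\beta_{n-1}10^\infty)=\pi_{q_0,q_1}((\beta_1\cdots\beta_{n-1}0)^\infty)$ holds only at the \emph{original} base, and at $(q'_0,q'_1)$ the difference
\[
\pi_{q'_0,q'_1}(\beta_1\cdots\beta_{n-1}10^\infty)-\pi_{q'_0,q'_1}((\beta_1\cdots\beta_{n-1}0)^\infty)
=\frac{1}{P\,q'_1}-\frac{S}{P\,q'_0-1},
\]
with $P=q'_{\beta_1}\cdots q'_{\beta_{n-1}}$ and $S=\sum_{k<n}\beta_k/(q'_{\beta_1}\cdots q'_{\beta_k})$, can have either sign. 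So ``geometric-series rewriting'' does not give $F_n(q')\le 0$; you only recover $\beta(q')\succcurlyeq(\beta_1\cdots\beta_{n-1}0)^\infty$, which is strictly weaker than the needed $\beta(q')\succcurlyeq\beta_1\cdots\beta_{n-1}10^\infty$.

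The paper sidesteps this entirely by a purely lexicographic argument: from $\beta(q')\succcurlyeq\alpha(q')\succcurlyeq\alpha(q)=(\beta_1\cdots\beta_{\ell-1}0)^\infty$ together with the upper bound $\beta_1(q')\cdots\beta_N(q')\preccurlyeq\beta_1(q)\cdots\beta_N(q)$, one is forced into the narrow interval $[(\beta_1\cdots\beta_{\ell-1}0)^\infty,\beta_1\cdots\beta_{\ell-1}10^\infty)$, and the shift-maximality of $\alpha(q')$ then pins $\alpha(q')=\beta(q')=(\beta_1\cdots\beta_{\ell-1}0)^\infty$, which is excluded. This avoids any value-level comparison at the perturbed base, and is the missing ingredient in your plan.
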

\begin{proof}
	We will prove (i) only, (ii) may be proved  similarly.
	Write $\alpha(q_0,q_1)=(\alpha_i(q_0, q_1))$ and $\beta(q_0,q_1)=(\beta_i(q_0, q_1))$. The following inequality follows from the
	definition of the greedy algorithm.
	\begin{equation*}
		\begin{split}
			&\sum_{k=1}^n\frac{\beta_k(q_0,q_1)}{q_{\beta_1(q_0,q_1)}q_{\beta_2(q_0,q_1)}\cdots q_{\beta_k(q_0,q_1)}}>\frac{q_0}{q_1}-\frac{1}{q_{\beta_1(q_0,q_1)}q_{\beta_2(q_0,q_1)}\cdots q_{\beta_n(q_0,q_1)}}\\
			&\text{ whenever }\beta_n(q_0,q_1)=0\text{ and }n\in \set{1,2,\cdots,N}.
		\end{split}
	\end{equation*}
	The same set of inequalities holds with $(q'_0, q'_1)$ in place of $(q_0,q_1)$ if  $(q'_0, q'_1)$  is close enough to $(q_0,q_1)$. Hence
	\begin{equation}\label{e:base contin1}
		\beta_1(q'_0, q'_1)\cdots \beta_N(q'_0, q'_1)\preccurlyeq \beta_1(q_0,q_1)\cdots \beta_N(q_0,q_1).
	\end{equation}
	Consider the assumption $\alpha(q'_0, q'_1)\succcurlyeq\alpha(q_0, q_1)$.
	
	If $\alpha(q_0, q_1)=\beta(q_0, q_1)$,
	then $$\beta(q'_0, q'_1)\succcurlyeq\alpha(q'_0, q'_1)\succcurlyeq\alpha(q_0, q_1)=\beta(q_0, q_1).$$
	Hence we have
	$$	\beta_1(q'_0, q'_1)\cdots \beta_N(q'_0, q'_1)\succcurlyeq \beta_1(q_0,q_1)\cdots \beta_N(q_0,q_1).$$
	This together with \eqref{e:base contin1} proves the first inequality in (i).
	
	If $\alpha(q_0, q_1)\prec\beta(q_0, q_1)$ and $\beta_\ell(q_0, q_1)$ is the last nonzero element of  $\beta(q_0, q_1)$, then, by \eqref{eq:qgreedy2}, we have $\alpha(q_0, q_1)=(\beta_1(q_0, q_1)\cdots\beta_{\ell-1}(q_0, q_1)0)^\infty$. We claim $\beta(q'_0, q'_1)\succcurlyeq \beta(q_0, q_1)$. This  also leads to (i). 
	Otherwise,
	$$\alpha(q_0, q_1)\preccurlyeq \alpha(q'_0, q'_1)\preccurlyeq \beta(q'_0, q'_1)\prec\beta(q_0, q_1)$$ and $\beta(q_0, q_1)=\beta_1(q_0, q_1)\cdots\beta_{\ell-1}(q_0, q_1)10^\infty$
	means $$(\beta_1(q_0, q_1)\cdots\beta_{\ell-1}(q_0, q_1)0)^\infty= \alpha(q'_0, q'_1)=\beta(q'_0, q'_1),$$ it is impossible.
	
	It remains to prove the  case $\alpha(q'_0, q'_1)\prec\alpha(q_0, q_1)$. Clearly, we have
	\begin{equation}\label{e:base contin2}
		\alpha_1(q'_0, q'_1)\cdots \alpha_N(q'_0, q'_1)\preccurlyeq \alpha_1(q_0,q_1)\cdots \alpha_N(q_0,q_1).
	\end{equation}
	By the
	quasi-greedy algorithm, we have
	\begin{equation*}
		\sum_{k=1}^n\frac{\alpha_k(q_0,q_1)}{q_{\alpha_1(q_0,q_1)}q_{\alpha_2(q_0,q_1)}\cdots q_{\alpha_k(q_0,q_1)}}<\frac{q_0}{q_1},
		\;\; n\in \set{1,2,\cdots, N}.
	\end{equation*}
	The same set of inequalities holds with $(q'_0, q'_1)$ in place of $(q_0,q_1)$ if  $(q'_0, q'_1)$  is close enough to $(q_0,q_1)$, whence
	\begin{equation}\label{e:base contin3}
		\alpha_1(q'_0, q'_1)\cdots \alpha_N(q'_0, q'_1)\succcurlyeq\alpha_1(q_0,q_1)\cdots \alpha_N(q_0,q_1).
	\end{equation}
	\eqref{e:base contin2} and \eqref{e:base contin3} yield our result.
\end{proof}

Recall that
$$\mathcal{C}'=\left(\set{0^\infty, x_1\cdots x_m 1^\infty}\times \Sigma_2^1\right)\cup \left(\Sigma_2^0\times \set{1^\infty, y_1\cdots y_n 0^\infty}\right) $$ with $ x_i,y_j \in \set{0,1}$ and  $ x_m=0, y_n=1$.
And $$\mathcal{C}:=\{(q_0, q_1)\in(1,\infty)^2:q_0+q_1=q_0q_1\}.$$

\color{black}
\begin{lemma}\label{l:Ul2}
	$\mathcal{U}_2$ is unbounded. Moreover,
	$$\mathcal{U}_2=\Phi^{-1}(\mathcal{U}'_2)\cup \mathcal{C} \text{ and }  \mathcal{U}_2\setminus \mathcal{C}=\Phi^{-1}(\mathcal{U}'_2).$$
\end{lemma}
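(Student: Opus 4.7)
The plan is to split the statement into three independent claims: unboundedness, the inclusion $\mathcal{C}\subseteq\mathcal{U}_2$ (together with $\mathcal{U}_2\subseteq\mathcal{B}\cup\mathcal{C}$), and the equality $\mathcal{U}_2\cap\mathcal{B}=\Phi^{-1}(\mathcal{U}'_2)$. The first two parts will be direct consequences of the definitions and of the remark accompanying \eqref{eq:validcondition}; the third is the content-bearing step, handled via the characterisation \eqref{e:unique-expanison}.

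First, I would dispose of the easy parts. For $(q_0,q_1)\in\mathcal{C}$, the identity $q_0(q_1-1)=q_1$ forces $\ell_{q_0,q_1}=q_1/(q_0(q_1-1))-1=0$ and $r_{q_0,q_1}=q_0/q_1=1/(q_1-1)$. The endpoints $0$ and $1/(q_1-1)$ of the admissible interval carry the unique expansions $0^\infty$ and $1^\infty$, so $\mathcal{C}\subseteq\mathcal{U}_2$. Parametrising $\mathcal{C}$ by $q_0\mapsto(q_0,q_0/(q_0-1))$ and letting $q_0\to 1^+$ shows $\mathcal{C}$, and hence $\mathcal{U}_2$, is unbounded. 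For the reverse containment $\mathcal{U}_2\subseteq\mathcal{B}\cup\mathcal{C}$, the remark after \eqref{eq:validcondition} states that on $\{q_0+q_1<q_0q_1\}$ neither $\ell_{q_0,q_1}$ nor $r_{q_0,q_1}$ has a double-base expansion, so such points are excluded from $\mathcal{U}_2$.

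Next, I would establish $\Phi^{-1}(\mathcal{U}'_2)\subseteq\mathcal{U}_2\cap\mathcal{B}$. Given $(\mu,\alpha)\in\mathcal{U}'_2$ and $(q_0,q_1)=\Phi^{-1}((\mu,\alpha))\in\mathcal{B}$, the sequence $\mu$ is the quasi-lazy expansion of $\ell_{q_0,q_1}$ and I verify \eqref{e:unique-expanison} with $(x_i)=\mu$: whenever $\mu_m=1$, the defining inequality $\mu\prec\sigma^m(\mu)$ in $\mathcal{U}'_2$ gives the required $\sigma^m(\mu)\succ\mu$; whenever $\mu_m=0$, the inequality $\sigma^m(\mu)\prec\alpha$ in $\mathcal{U}'_2$ supplies the other case. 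Hence $\mu$ is the unique expansion of $\ell_{q_0,q_1}$, and the symmetric argument for $\alpha$ delivers uniqueness of $r_{q_0,q_1}$'s expansion.

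The main content is the reverse inclusion $\mathcal{U}_2\cap\mathcal{B}\subseteq\Phi^{-1}(\mathcal{U}'_2)$. Let $(q_0,q_1)\in\mathcal{U}_2\cap\mathcal{B}$, so the unique expansions of $\ell_{q_0,q_1}$ and $r_{q_0,q_1}$ are $\mu$ and $\alpha$, and \eqref{e:unique-expanison} gives the four digit-conditional inequalities. The weak relations $\mu\preccurlyeq\sigma^m(\mu)$ and $\sigma^m(\alpha)\preccurlyeq\alpha$ are the defining properties of quasi-lazy and quasi-greedy expansions. To get the strict forms $\mu\prec\sigma^m(\mu)$ and $\sigma^m(\alpha)\prec\alpha$ when $\mu_m=0$ (resp.\ $\alpha_m=1$), I rule out equality: if $\sigma^m(\mu)=\mu$ then $\mu$ is periodic, its minimal period $m_0$ divides $m$, and the lex-minimality of $\mu$ among its cyclic shifts (used together with $\mu\neq 0^\infty$) forces the last digit of the minimal period to be $1$, so $\mu_m=\mu_{m_0}=1$, contradicting the assumption $\mu_m=0$ or giving $\mu\prec\sigma^m(\mu)=\mu$ via uniqueness when $\mu_m=1$; the dual argument handles $\alpha$. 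What remains is the genuinely double-base content: showing $\sigma^m(\mu)\prec\alpha$ when $\mu_m=1$ and $\sigma^n(\alpha)\succ\mu$ when $\alpha_n=0$. For the first, I would argue by contradiction: if $\sigma^m(\mu)\succcurlyeq\alpha$, let $j\leq m$ be the greatest index with $\mu_j=0$ (which exists since $\mu_1=0$), so $\mu_{j+1}\cdots\mu_m=1^{m-j}$ and $\sigma^j(\mu)\succcurlyeq 1^{m-j}\alpha$; a direct computation of $\pi_{q_0,q_1}(\sigma^{j-1}(\mu))=\pi_{q_0,q_1}(0\cdot 1^{m-j}\alpha\cdots)/q_0$ shows, using $q_0/q_1\leq\pi_{q_0,q_1}(1^{k}\alpha)\leq 1/(q_1-1)$, that this value lies in the ambiguity interval $[1/q_1,1/(q_0(q_1-1))]$, contradicting the uniqueness of $\ell_{q_0,q_1}$'s expansion. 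The symmetric calculation, tracing forward from $\alpha$ to the first $0$ following position $n$, yields $\sigma^n(\alpha)\succ\mu$.

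The hardest step will be the last one: establishing the cross-inequalities $\sigma^m(\mu)\prec\alpha$ and $\sigma^n(\alpha)\succ\mu$, since they are not direct consequences of uniqueness at a single shift but require an inductive back-tracking through $\mu$ (resp.\ forward tracking through $\alpha$) to a digit change, combined with the identification of the ambiguity interval endpoints $1/q_1$ and $1/(q_0(q_1-1))$ as the images of $\ell_{q_0,q_1}$ and $r_{q_0,q_1}$ under suitable powers of the two branch maps $T_{q_0,0}$ and $T_{q_1,1}$.
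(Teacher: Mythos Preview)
Your overall structure matches the paper's: unboundedness via $\mathcal{C}$, the inclusion $\Phi^{-1}(\mathcal{U}'_2)\subseteq\mathcal{U}_2$ by verifying \eqref{e:unique-expanison}, and the reverse inclusion by upgrading the weak inequalities from \eqref{e:unique-expanison} to the strict ones defining $\mathcal{U}'_2$. Your periodicity argument for $\mu\prec\sigma^m(\mu)$ (ruling out $\sigma^m(\mu)=\mu$ via the last digit of the minimal period) is a clean alternative to the paper's digit-tracking.

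There is, however, a real gap in your treatment of the cross-inequality $\sigma^m(\mu)\prec\alpha$ when $\mu_m=1$. After locating the greatest $j<m$ with $\mu_j=0$, you pass to the projection $\pi_{q_0,q_1}$ and try to force $\pi_{q_0,q_1}(\sigma^{j-1}(\mu))$ into the switch region $[1/q_1,1/(q_0(q_1-1))]$ via the estimate $\pi_{q_0,q_1}(1^{k}\alpha)\ge q_0/q_1$. But the sequence you must bound is $1^{m-j}\sigma^m(\mu)$, not $1^{m-j}\alpha$; your only link between them is the lexicographic hypothesis $\sigma^m(\mu)\succcurlyeq\alpha$, and in the double-base setting $\pi_{q_0,q_1}$ is \emph{not} monotone with respect to the lexicographic order. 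One can rescue the argument by first proving that $\pi_{q_0,q_1}$ is order-preserving on the set of \emph{unique} expansions, but that is an extra (non-trivial) lemma you have not stated.

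More to the point, the dynamical detour is unnecessary. Having found $j$, the paper finishes in one symbolic line: since $\mu_j=0$, \eqref{e:unique-expanison} gives $\sigma^j(\mu)\prec\alpha$; and since $\sigma^j(\mu)=1^{m-j}\sigma^m(\mu)$ with $\sigma^m(\mu)\ne 1^\infty$ (as $\mu$ is co-infinite), the elementary inequality $x\prec 1^{k}x$ yields $\sigma^m(\mu)\prec\sigma^j(\mu)\prec\alpha$. Equivalently, under the contradiction hypothesis $\sigma^m(\mu)\succcurlyeq\alpha$ one gets $\sigma^j(\mu)=1^{m-j}\sigma^m(\mu)\succcurlyeq 1^{m-j}\alpha\succcurlyeq\alpha$ (using $\sigma^n(\alpha)\preccurlyeq\alpha$), contradicting $\sigma^j(\mu)\prec\alpha$. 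So the step you flag as ``hardest'' is in fact immediate once you stay on the symbolic side; replace your $\pi$-computation with this observation and the proof is complete.
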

\color{black}
\begin{proof}
	We will prove $\mathcal{U}_2$ is unbounded.
	Note that $\mathcal{C}\subseteq \mathcal{U}_2$, since both $\mu({q_0,q_1})$ and $\alpha({q_0,q_1})$ have a unique double-base expansion $0^\infty$ and $1^\infty$, respectively.
	Take $q_{1,n}=1+1/n$
	and  $q_0=q_{1,n}/(q_{1,n}-1)$, we have $(q_{1,n}/(q_{1,n}-1), q_{1,n})\in\mathcal{U}_2$, $q_{1,n}\rightarrow 1$ as $n\rightarrow \infty$, and $q_0=q_{1,n}/(q_{1,n}-1)\rightarrow \infty$ as $q_{1,n}\rightarrow 1$. \color{black}(As shown in Figure \ref{fig:figure2}, the set $\mathcal{C}$ is unbounded.)\color{black}
	
	Following we prove the first equality, the second follows from it, as the union in the first equality is disjoint. To prove $"\supseteq"$, by the above argument, it suffices to prove   for  $(\mu, \alpha)\in \mathcal{U}'_2\subseteq \mathcal{B}'$, we have
	$$\Phi^{-1}((\mu, \alpha))=(q_0(\mu, \alpha), q_1(\mu, \alpha))\in \mathcal{U}_2.$$ It is true by Theorem \ref{t:main1}.  Furthermore, $(\mu, \alpha)\neq(0^\infty,1^\infty)$  means $\Phi^{-1}((\mu, \alpha))\notin \mathcal{C}$. Therefore  $\Phi^{-1}(\mathcal{U}'_2)\cap \mathcal{C}=\emptyset$.
	
	It remains to prove the inverse inclusion. If $(q_0, q_1)\in  \mathcal{U}_2\setminus  \mathcal{C}$, by applying  Theorem \ref{t:main1}  again,  we have,
	$$\Phi((q_0, q_1))=(\mu(q_0, q_1), \alpha(q_0, q_1) ).$$ Following we prove $(\mu(q_0, q_1), \alpha(q_0, q_1))\in\mathcal{U}_2'$. Clearly, $\mu(q_0, q_1)\neq 0^\infty$ or $\alpha(q_0, q_1)\neq 1^\infty$. 
	Note that   $\mu(q_0, q_1)$ and $\alpha(q_0, q_1)$ are the unique double-base expansion of the points $\ell_{q_0, q_1}$ and $r_{q_0, q_1}$, respectively.
	Using \eqref{e:unique-expanison}, it suffices to prove 	\begin{equation*}
		\left\{
		\begin{array}{clrr}
			\mu(q_0,q_1)\prec\sigma^m(\mu(q_0,q_1)) & \text{ whenever }\mu_m(q_0,q_1)=0, \\
			\sigma^m(\mu(q_0,q_1))\prec\alpha(q_0,q_1) & \text{ whenever } \mu_m(q_0,q_1)=1.\\
		\end{array}
		\right.
	\end{equation*}
	and 	\begin{equation*}
		\left\{
		\begin{array}{clrr}
			\mu(q_0,q_1)\prec\sigma^m((\alpha(q_0,q_1)) & \text{ whenever }\alpha_m(q_0,q_1)=0, \\
			\sigma^m(\alpha(q_0,q_1))\prec\alpha(q_0,q_1) & \text{ whenever } \alpha_m(q_0,q_1)=1.\\
		\end{array}
		\right.
	\end{equation*}
	We only prove the first group of inequalities. The second group  may be proved by a similar argument.
	\color{black}
	
	Suppose $\mu_m(q_0, q_1) = 0$. If there exists a largest index $k$ such that $\mu_k(q_0, q_1) = 1$ with $2 \leq k < m$, then
	$$
	\mu_k(q_0, q_1) \cdots \mu_m(q_0, q_1) = 10^{m-k}, \quad \sigma^k(\mu(q_0, q_1)) = 0^{m-k} \mu_{m+1}(q_0, q_1) \mu_{m+2}(q_0, q_1) \cdots.
	$$
	Thus, we have
	$$
	\mu(q_0, q_1) \prec \sigma^k(\mu(q_0, q_1)) \prec \cdots \prec \mu_{m+1}(q_0, q_1) \mu_{m+2}(q_0, q_1) \cdots = \sigma^m(\mu(q_0, q_1)).
	$$
	If no such $k$ exists, i.e., $\mu_1(q_0, q_1) \cdots \mu_m(q_0, q_1) = 0^m$, then
	$$
	\sigma^m(\mu(q_0, q_1)) = \mu_{m+1}(q_0, q_1) \mu_{m+2}(q_0, q_1) \cdots,
	\text{ and }
	\mu(q_0, q_1) \prec \sigma^m(\mu(q_0, q_1)).
	$$
	Therefore, we have shown that $\mu(q_0, q_1) \prec \sigma^m(\mu(q_0, q_1))$ whenever $\mu_m(q_0, q_1) = 0$.
	
	Next, suppose $\mu_m(q_0, q_1) = 1$. If there exists a largest index $k$ such that $\mu_k(q_0, q_1) = 0$ with $1 \leq k < m$, then
	$$
	\mu_k(q_0, q_1) \cdots \mu_m(q_0, q_1) = 01^{m-k}, \quad \sigma^k(\mu(q_0, q_1)) = 1^{m-k} \mu_{m+1}(q_0, q_1) \mu_{m+2}(q_0, q_1) \cdots.
	$$
	Thus, we get
	$$
	\sigma^m(\mu(q_0, q_1)) = \mu_{m+1}(q_0, q_1) \mu_{m+2}(q_0, q_1) \cdots \prec\sigma^k(\mu(q_0, q_1)) \prec \alpha(q_0, q_1).
	$$
	Therefore, $\sigma^m(\mu(q_0, q_1)) \prec \alpha(q_0, q_1)$ whenever $\mu_m(q_0, q_1) = 1$.
\end{proof}
\color{black}

\begin{lemma}\label{l:V_closed}
	$\mathcal{V}_2$ is closed. Moreover,
	$$\mathcal{V}_2=
	\Phi^{-1}(\mathcal{V}'_2\setminus\mathcal{C}')\cup \mathcal{C}
	\text{ and } \mathcal{V}_2\setminus\mathcal{C}=\Phi^{-1}(\mathcal{V}'_2\setminus\mathcal{C}').$$
\end{lemma}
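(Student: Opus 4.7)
My plan is to first establish the two set-theoretic identities and then deduce closedness from the compactness of $\mathcal{V}'_2$ and continuity of $\Phi^{-1}$.

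The second equation $\mathcal{V}_2\setminus\mathcal{C}=\Phi^{-1}(\mathcal{V}'_2\setminus\mathcal{C}')$ will follow from the first because $\Phi^{-1}(\mathcal{V}'_2\setminus\mathcal{C}')\subseteq\mathcal{B}$ and $\mathcal{B}\cap\mathcal{C}=\emptyset$. For the first identity, the inclusion $\mathcal{C}\subseteq\mathcal{V}_2$ is immediate from Remark \ref{r:monotonic}(i): on $\mathcal{C}$ one has $\mu(q_0,q_1)=0^\infty$ and $\alpha(q_0,q_1)=1^\infty$, and the shift conditions are trivial. The inclusion $\Phi^{-1}(\mathcal{V}'_2\setminus\mathcal{C}')\subseteq\mathcal{V}_2$ follows from Theorem \ref{t:main1}: such $(\mu,\alpha)\in\mathcal{V}'_2\setminus\mathcal{C}'\subseteq\mathcal{B}'$ corresponds to a unique $(q_0,q_1)=\Phi^{-1}(\mu,\alpha)\in\mathcal{B}$ with $\mu=\mu(q_0,q_1)$ and $\alpha=\alpha(q_0,q_1)$, and the shift inequalities in $\mathcal{V}'_2$ translate directly to those in $\mathcal{V}_2$. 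For the reverse inclusion $\mathcal{V}_2\setminus\mathcal{C}\subseteq\Phi^{-1}(\mathcal{V}'_2\setminus\mathcal{C}')$, I would combine the defining conditions of $\mathcal{V}_2$ with the intrinsic self-comparisons $\mu\preccurlyeq\sigma^m(\mu)$ and $\sigma^n(\alpha)\preccurlyeq\alpha$ arising from the quasi-lazy and quasi-greedy algorithms to establish the $\mathcal{V}'_2$ inequalities. Membership in the complement of $\mathcal{C}'$ then reduces to the following observations: the quasi-lazy expansion is co-infinite by construction, hence cannot equal a co-finite sequence $x_1\cdots x_m1^\infty$ with $x_m=0$; the quasi-greedy is infinite, hence cannot equal a finite sequence $y_1\cdots y_n0^\infty$ with $y_n=1$; and $(q_0,q_1)\notin\mathcal{C}$ rules out both $\mu=0^\infty$ and $\alpha=1^\infty$ via Remark \ref{r:monotonic}(i).

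For closedness, let $(q_0^n,q_1^n)\in\mathcal{V}_2$ converge to $(q_0^*,q_1^*)\in(1,\infty)^2$. If $(q_0^*,q_1^*)\in\mathcal{C}$ we are done. Otherwise $(q_0^*,q_1^*)\in\mathcal{B}$, and since $\mathcal{C}$ is closed we may pass to a subsequence with $(q_0^n,q_1^n)\in\mathcal{V}_2\setminus\mathcal{C}$ for all $n$. By the identity just established, $(\mu^n,\alpha^n):=\Phi(q_0^n,q_1^n)\in\mathcal{V}'_2\setminus\mathcal{C}'$. Compactness of $\Sigma_2\times\Sigma_2$ yields a further subsequence along which $(\mu^n,\alpha^n)\to(\mu^*,\alpha^*)$, and Lemma \ref{l:V'compact} gives $(\mu^*,\alpha^*)\in\mathcal{V}'_2$. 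By joint continuity of $(q_0,q_1,(d_i))\mapsto\pi_{q_0,q_1}((d_i))$, one has $\pi_{q_0^*,q_1^*}(\mu^*)=\ell_{q_0^*,q_1^*}$ and $\pi_{q_0^*,q_1^*}(\alpha^*)=r_{q_0^*,q_1^*}$. If I can show $(\mu^*,\alpha^*)=\Phi(q_0^*,q_1^*)$, then $(\mu^*,\alpha^*)\in\mathcal{V}'_2\cap\mathcal{B}'=\mathcal{V}'_2\setminus\mathcal{C}'$, and the set identity gives $(q_0^*,q_1^*)\in\mathcal{V}_2$.

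The main obstacle will be this identification of $\mu^*$ with $\mu(q_0^*,q_1^*)$ (and analogously for $\alpha^*$): the conditions that $\mu^*$ is an expansion of $\ell_{q_0^*,q_1^*}$ satisfying $\mu^*\preccurlyeq\sigma^m(\mu^*)$ do not by themselves distinguish the quasi-lazy from the lazy expansion $\lambda(q_0^*,q_1^*)$, which can be co-finite and thus lie in $\mathcal{C}'$. To handle this I would split into cases via Lemma \ref{l:LRContinous}(ii): along any subsequence on which $\mu^n\succ\mu(q_0^*,q_1^*)$, the lemma forces prefix-agreement of arbitrary length between $\mu^n$ and $\mu(q_0^*,q_1^*)$, so $\mu^*=\mu(q_0^*,q_1^*)$ after passing to the limit; along any subsequence with $\mu^n\preccurlyeq\mu(q_0^*,q_1^*)$, the lemma instead gives prefix-agreement of the lazy expansions, so $\mu^*=\lambda(q_0^*,q_1^*)$, and in this degenerate subcase I would use the explicit relation \eqref{eq:qlazy} between $\lambda$ and $\mu$ together with the $\mathcal{V}'_2$-inequalities for $(\mu^*,\alpha^*)$ to transfer them to $(\mu(q_0^*,q_1^*),\alpha(q_0^*,q_1^*))$ directly, concluding $(q_0^*,q_1^*)\in\mathcal{V}_2$ without needing the identification. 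A symmetric analysis for $\alpha^*$ via Lemma \ref{l:LRContinous}(i) completes the argument.
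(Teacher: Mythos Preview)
Your route differs from the paper's: the paper proves closedness first by showing the complement of $\mathcal{V}_2$ in $(1,\infty)^2$ is open, carrying out a four-case analysis with Lemma~\ref{l:LRContinous} at a point where one of the defining inequalities of $\mathcal{V}_2$ fails strictly, and only afterwards records the set identities. You reverse the order and aim to deduce closedness sequentially from compactness of $\mathcal{V}'_2$ and continuity of $\Phi^{-1}$. This is a legitimate and in some respects cleaner strategy, and your treatment of the two set identities matches the paper's.

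There is, however, a gap in your identification step. In Case~B ($\mu^n\preccurlyeq\mu(q_0^*,q_1^*)$), Lemma~\ref{l:LRContinous}(ii) gives prefix agreement of the \emph{lazy} expansions $\lambda(q_0^n,q_1^n)$ with $\lambda(q_0^*,q_1^*)$, not of $\mu^n=\mu(q_0^n,q_1^n)$; so the conclusion $\mu^*=\lambda(q_0^*,q_1^*)$ does not follow, and the proposed ``transfer of inequalities via \eqref{eq:qlazy}'' is too vague to close the argument. In fact Lemma~\ref{l:LRContinous} is not needed at all in your direction. Argue instead that $(\mu^*,\alpha^*)\notin\mathcal{C}'$: since $(q_0^*,q_1^*)\in\mathcal{B}$, for large $n$ one has $q_0^n+q_1^n-q_0^nq_1^n>\delta>0$, which (via the quasi-greedy and quasi-lazy algorithms) uniformly bounds the length of the initial block $1^k$ in $\alpha^n$ and of $0^k$ in $\mu^n$; hence $\alpha^*\neq 1^\infty$ and $\mu^*\neq 0^\infty$. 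Now if $\mu^*$ were co-finite, say $\mu^*=x_1\cdots x_m1^\infty$ with $x_m=0$, the $\mathcal{V}'_2$-inequality $\sigma^m(\mu^*)\preccurlyeq\alpha^*$ would force $\alpha^*=1^\infty$, a contradiction; symmetrically $\alpha^*$ cannot be finite. Thus $(\mu^*,\alpha^*)\in\mathcal{V}'_2\setminus\mathcal{C}'\subseteq\mathcal{B}'$, and continuity of $\Phi^{-1}$ on $\mathcal{B}'$ gives $(q_0^*,q_1^*)=\lim\Phi^{-1}(\mu^n,\alpha^n)=\Phi^{-1}(\mu^*,\alpha^*)\in\Phi^{-1}(\mathcal{V}'_2\setminus\mathcal{C}')\subseteq\mathcal{V}_2$, with no case split required.
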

\begin{proof}
	First we verify that the set $\mathcal{V}_2$ is closed by showing that its  complement  in $(1,\infty)^2$ is open. Consider $(q_0, q_1)\in (1,\infty)^2\setminus \mathcal{V}_2$. Then $(q_0, q_1)\in\{(q_0, q_1)\in (1,\infty)^2:q_0+q_1<q_0q_1\}$ or there exists some $j>0$ such that $$\sigma^j(\alpha(q_0, q_1))\prec\mu(q_0, q_1)\text{ or } \sigma^j(\mu(q_0, q_1))\succ\alpha(q_0, q_1).$$
	
	It is clear that the set $\{(q_0, q_1)\in (1,\infty)^2:q_0+q_1<q_0q_1\}$ is open.
	We only verify the case where \( \sigma^j(\alpha(q_0, q_1)) \prec \mu(q_0, q_1) \); the other case can be proved similarly. Let \( m > 1 \) be the smallest positive integer such that
	\begin{align}
		\alpha_{j+1}(q_0, q_1) \cdots \alpha_{j+m}(q_0, q_1) \prec \mu_1(q_0, q_1) \cdots \mu_m(q_0, q_1). \label{eq:first}.
	\end{align}
	
	Assume \( (q'_0, q'_1) \) is sufficiently close to \( (q_0, q_1) \). We have the following possible cases:
	\color{black}
	\subsection*{Case 1: \( \alpha(q'_0, q'_1) \prec \alpha(q_0, q_1) \) and \( \mu(q'_0, q'_1) \succ \mu(q_0, q_1) \)}
	
	By Lemma \ref{l:LRContinous}, \( \alpha(q'_0, q'_1) \) and \( \alpha(q_0, q_1) \) begin with the same word of length \( j + m \), and \( \mu(q'_0, q'_1) \) and \( \mu(q_0, q_1) \) begin with the same word of length \( m \). Therefore, \eqref{eq:first} still holds, and we conclude that \( (q'_0, q'_1) \notin \mathcal{V}_2 \).
	
	\subsection*{Case 2: \( \alpha(q'_0, q'_1) \prec \alpha(q_0, q_1) \) and \( \mu(q'_0, q'_1) \preccurlyeq \mu(q_0, q_1) \)}
	
	Applying Lemma \ref{l:LRContinous} again, we conclude that \( \lambda(q'_0, q'_1) \) and \( \lambda(q_0, q_1) \) begin with the same word of length \( m \).
	
	\begin{itemize}
		\item If $ \lambda(q_0, q_1) = \mu(q_0, q_1) $, then  $ \lambda(q'_0, q'_1) \preccurlyeq \mu(q'_0, q'_1) \preccurlyeq \mu(q_0, q_1) = \lambda(q_0, q_1)$, hence, \( \mu(q'_0, q'_1) \) and \( \mu(q_0, q_1) \) begin with the same word of length \( m \), and we conclude that \( (q'_0, q'_1) \notin \mathcal{V}_2 \).
		\item If \( \lambda(q_0, q_1) \) is co-finite, i.e., \( \lambda(q_0, q_1) = \lambda_1(q_0, q_1) \cdots \lambda_{\ell-1}(q_0, q_1) 01^\infty \) for some \( \ell \geq 2 \) and \( \mu(q_0, q_1) = (\lambda_1(q_0, q_1) \cdots \lambda_{\ell-1}(q_0, q_1) 1)^\infty \) by \eqref{eq:qlazy}, then by Lemma \ref{l:LRContinous}, we get $ \lambda(q'_0, q'_1), \mu(q'_0, q'_1) \text{ start with } \lambda_1(q_0, q_1) \cdots \lambda_{\ell-1}(q_0, q_1) 01^n0 \quad \text{for some large } n.$
		Now, using \eqref{eq:first}, we observe that \( \alpha_{j+1}(q_0, q_1) = \mu_1(q_0, q_1) = 0 \). Therefore, we have
		$$
		\sigma^\ell(\mu(q'_0, q'_1)) = 1^n 0 \cdots \succ \alpha_1(q_0, q_1) \cdots \alpha_j(q_0, q_1) 0 \cdots = \alpha(q_0, q_1) \succcurlyeq \alpha(q'_0, q'_1)  \text{ for }  n \geq j+1.
		$$
		Thus, \( (q'_0, q'_1) \notin \mathcal{V}_2 \).
	\end{itemize}
	
	The left two cases, \( \alpha(q'_0, q'_1) \succcurlyeq \alpha(q_0, q_1) \) and \( \mu(q'_0, q'_1) \succ \mu(q_0, q_1) \), as well as \( \alpha(q'_0, q'_1) \succcurlyeq \alpha(q_0, q_1) \) and \( \mu(q'_0, q'_1) \preccurlyeq \mu(q_0, q_1) \), can be verified by similar arguments.
	
	Finally, we prove the first equality, the second follows from it, as the union in the first equality is disjoint. The inclusion $"\supseteq"$ is obvious.		
	For the inverse inclusion, if $(q_0, q_1)\in \mathcal{V}_2\setminus\mathcal{C},$ then $\mu(q_0, q_1)\neq 0^\infty$ or $\alpha(q_0, q_1)\neq 1^\infty$. That is $\Phi((q_0, q_1))\in \mathcal{V}_2'\setminus\mathcal{C}'$ and then $(q_0, q_1)\in \Phi^{-1}(\mathcal{V}'_2\setminus\mathcal{C}')$.
\end{proof}

\begin{lemma}\label{l:Vdis2} Let $(q_0, q_1)\in \mathcal{V}_2$. If $$\mu(q_0, q_1)=(\mu_1\cdots\mu_m\alpha_1\cdots\alpha_n)^\infty\qtq{and}
	\alpha(q_0, q_1)=(\alpha_1\cdots\alpha_n\mu_1\cdots\mu_m)^\infty,$$then $(q_0, q_1)$ is an isolated point of $\mathcal{V}_2$. Hence $(q_0, q_1)\notin \overline{\mathcal{U}}_2$.
\end{lemma}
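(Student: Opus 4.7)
The plan is to derive the isolation of $(q_0,q_1)$ in $\mathcal{V}_2$ from the corresponding isolation of $(\mu,\alpha)$ in $\mathcal{V}'_2$ established in Lemma \ref{l: V'isolated}, using the continuity of $\Phi^{-1}$ on $\mathcal{B}'$ (Theorem \ref{t:main1}) together with a compactness argument in $\mathcal{V}'_2$. First I would verify that $(q_0,q_1)\notin\mathcal{C}$: since $\mu_1=0$ and $\alpha_1=1$ both appear in the periods of $\mu$ and $\alpha$, neither $\mu=0^\infty$ nor $\alpha=1^\infty$, so Remark \ref{r:monotonic}(i) gives $(q_0,q_1)\in\mathcal{B}$, hence $(\mu,\alpha)=\Phi(q_0,q_1)\in\mathcal{B}'\setminus\mathcal{C}'$. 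Moreover $\sigma^m(\mu)=\alpha$ rules out $(\mu,\alpha)\in\mathcal{U}'_2$, so by Theorem \ref{t:main3}(i), $(q_0,q_1)\notin\mathcal{U}_2$.

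Suppose for contradiction that there exist $(q_0^k,q_1^k)\in\mathcal{V}_2\setminus\{(q_0,q_1)\}$ with $(q_0^k,q_1^k)\to(q_0,q_1)$. Since $\mathcal{C}$ is closed and disjoint from a neighbourhood of $(q_0,q_1)$, we may assume $(q_0^k,q_1^k)\in\mathcal{V}_2\setminus\mathcal{C}$, so by Lemma \ref{l:V_closed}, $(\mu^k,\alpha^k):=\Phi(q_0^k,q_1^k)\in\mathcal{V}'_2\setminus\mathcal{C}'$. By compactness of $\mathcal{V}'_2$ (Lemma \ref{l:V'compact}) some subsequence $(\mu^{k_j},\alpha^{k_j})$ converges to $(\mu^*,\alpha^*)\in\mathcal{V}'_2$. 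The crucial step is to show $(\mu^*,\alpha^*)\notin\mathcal{C}'$. Since $q_0^k,q_1^k$ are bounded above and eventually bounded away from $1$, the map $\pi$ is jointly continuous there, so $\pi_{q_0,q_1}(\mu^*)=\lim\ell_{q_0^k,q_1^k}=\ell_{q_0,q_1}>0$ (as $(q_0,q_1)\in\mathcal{B}$) and $\pi_{q_0,q_1}(\alpha^*)=r_{q_0,q_1}<1/(q_1-1)$. This immediately excludes $\mu^*=0^\infty$ and $\alpha^*=1^\infty$. The remaining two possibilities are eliminated using the $\mathcal{V}'_2$-conditions: if $\mu^*=x_1\cdots x_s 1^\infty$ then $\sigma^s(\mu^*)=1^\infty\preccurlyeq\alpha^*$ forces $\alpha^*=1^\infty$, and if $\alpha^*=y_1\cdots y_t 0^\infty$ then $\mu^*\preccurlyeq\sigma^t(\alpha^*)=0^\infty$ forces $\mu^*=0^\infty$, in each case reducing to a case already ruled out.

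Consequently $(\mu^*,\alpha^*)\in\mathcal{V}'_2\setminus\mathcal{C}'\subseteq\mathcal{B}'$, so by continuity of $\Phi^{-1}$ on $\mathcal{B}'$ we obtain $\Phi^{-1}(\mu^*,\alpha^*)=\lim(q_0^{k_j},q_1^{k_j})=(q_0,q_1)=\Phi^{-1}(\mu,\alpha)$, and bijectivity of $\Phi$ yields $(\mu^*,\alpha^*)=(\mu,\alpha)$. Then $(\mu^{k_j},\alpha^{k_j})\to(\mu,\alpha)$ in $\mathcal{V}'_2$, and the isolation proved in Lemma \ref{l: V'isolated} forces $(\mu^{k_j},\alpha^{k_j})=(\mu,\alpha)$ for large $j$, hence $(q_0^{k_j},q_1^{k_j})=(q_0,q_1)$, contradicting our choice. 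For the final assertion $(q_0,q_1)\notin\overline{\mathcal{U}_2}$: since $\mathcal{U}_2\subseteq\mathcal{V}_2$ and $(q_0,q_1)$ is isolated in $\mathcal{V}_2$, any sequence in $\mathcal{U}_2$ converging to $(q_0,q_1)$ must eventually equal $(q_0,q_1)$, but $(q_0,q_1)\notin\mathcal{U}_2$. The main obstacle is the case analysis excluding $(\mu^*,\alpha^*)\in\mathcal{C}'$; everything else is a straightforward compactness-and-continuity transfer between the symbolic and parameter sides.
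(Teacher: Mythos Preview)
Your argument is correct and avoids circularity: every lemma you invoke (Lemma~\ref{l: V'isolated}, Lemma~\ref{l:V'compact}, Lemma~\ref{l:V_closed}, Lemma~\ref{l:Ul2}, Theorem~\ref{t:main1}) is established before Lemma~\ref{l:Vdis2} in the paper. The exclusion of $(\mu^*,\alpha^*)\in\mathcal{C}'$ via joint continuity of $\pi$ and the $\mathcal{V}'_2$-inequalities is the only nonroutine point, and you handle it cleanly.

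The paper, however, proceeds differently. Rather than passing to the symbolic side by compactness and pulling back through the continuity of $\Phi^{-1}$, it works directly on the parameter side using Lemma~\ref{l:LRContinous}: for $(q'_0,q'_1)$ close to $(q_0,q_1)$ it shows that $\alpha(q'_0,q'_1)$ and $\mu(q'_0,q'_1)$ must begin with long repetitions of the periodic blocks (the alternatives being forbidden by the $\mathcal{V}_2$-conditions), and then repeats the combinatorial iteration of Lemma~\ref{l: V'isolated} to force $(\mu(q'_0,q'_1),\alpha(q'_0,q'_1))=(\mu,\alpha)$; bijectivity of $\Phi$ finishes. So the paper's route is a direct argument using the continuity lemma for expansions, while yours is a compactness-and-transfer argument that reuses Lemma~\ref{l: V'isolated} wholesale. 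Your approach is conceptually cleaner and avoids reproving the symbolic iteration, at the cost of the extra step of excluding $\mathcal{C}'$ for the subsequential limit; the paper's approach is more hands-on but self-contained and does not need the (standard but unstated) joint continuity of $\pi_{q_0,q_1}((d_i))$ in all three variables.
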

\begin{proof}
	The greedy double-base expansion of the point \( r_{q_0, q_1} \) is \( \alpha_1 \cdots \alpha_n \mu_1 \cdots \mu_m^+ 0^\infty \) and the lazy double-base expansion of the point \( \ell_{q_0, q_1} \) is \( \mu_1 \cdots \mu_m \alpha_1 \cdots \alpha_n^- 1^\infty \) by \eqref{eq:qlazy} and \eqref{eq:qgreedy2}. Thus, \( \alpha(q'_0, q'_1) \) begins with either \( \alpha_1 \cdots \alpha_n \mu_1 \cdots \mu_m^+ 0^{m+n} \) or \( (\alpha_1 \cdots \alpha_n \mu_1 \cdots \mu_m)^k \), and \( \mu(q'_0, q'_1) \) begins with either \( \mu_1 \cdots \mu_m \alpha_1 \cdots \alpha_n^- 1^{m+n} \) or \( (\mu_1 \cdots \mu_m \alpha_1 \cdots \alpha_n)^k \) by Lemma \ref{l:LRContinous}. However, \( \alpha(q'_0, q'_1) \) can't begin with \( \alpha_1 \cdots \alpha_n \mu_1 \cdots \mu_m^+  0^{m+n} \), nor can \( \mu(q'_0, q'_1) \) begin with \( \mu_1 \cdots \mu_m \alpha_1 \cdots \alpha_n^- 1^{m+n} \) since \( (q'_0, q'_1) \in \mathcal{V}_2 \). Thus, we conclude that \( \alpha(q'_0, q'_1) \) begins with \( (\alpha_1 \cdots \alpha_n \mu_1 \cdots \mu_m)^k \) and \( \mu(q'_0, q'_1) \) begins with \( (\mu_1 \cdots \mu_m \alpha_1 \cdots \alpha_n)^k \). 
	
	By an analogous argument from Lemma \ref{l: V'isolated}, we obtain \( \mu(q'_0, q'_1) = \mu(q_0, q_1) \) and \( \alpha(q'_0, q'_1) = \alpha(q_0, q_1) \). Hence, we conclude \( (q'_0, q'_1) = (q_0, q_1) \) by Theorem \ref{t:main1}.
	\color{black}
\end{proof}

\begin{lemma}\label{l:UU}
	$ \mathcal{U}_2\subsetneq\overline{\mathcal{U}_2}\subsetneq \mathcal{V}_2$. Moreover,
	$$\overline{\mathcal{U}_2}=
	\Phi^{-1}(\overline{\mathcal{U}'_2}\setminus\mathcal{C}')\cup \mathcal{C}
	\text{ and } \overline{\mathcal{U}_2}\setminus \mathcal{C}=\Phi^{-1}(\overline{\mathcal{U}'_2}\setminus\mathcal{C}').$$
\end{lemma}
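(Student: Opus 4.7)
The plan is to first establish the main identity $\overline{\mathcal{U}_2} = \Phi^{-1}(\overline{\mathcal{U}'_2} \setminus \mathcal{C}') \cup \mathcal{C}$, from which the second equality follows at once since the union is disjoint ($\Phi^{-1}$ is defined on $\mathcal{B}'$, whose preimage does not meet $\mathcal{C}$), and the strict inclusions $\mathcal{U}_2 \subsetneq \overline{\mathcal{U}_2} \subsetneq \mathcal{V}_2$ will be deduced by exhibiting explicit symbolic witnesses. The four key ingredients are: the bijection $\Phi: \mathcal{B} \to \mathcal{B}'$ with continuous inverse (Theorem~\ref{t:main1}); the formulae $\mathcal{U}_2 = \Phi^{-1}(\mathcal{U}'_2) \cup \mathcal{C}$ and $\mathcal{V}_2 = \Phi^{-1}(\mathcal{V}'_2 \setminus \mathcal{C}') \cup \mathcal{C}$ with $\mathcal{V}_2$ closed (Lemmas~\ref{l:Ul2} and \ref{l:V_closed}); the dichotomy of Theorem~\ref{t:main2}(ii)-(iii); and the isolation Lemma~\ref{l:Vdis2}.

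For the inclusion $\supseteq$, the part $\mathcal{C} \subseteq \overline{\mathcal{U}_2}$ is immediate from Lemma~\ref{l:Ul2}. For $(q_0,q_1) = \Phi^{-1}((\mu,\alpha))$ with $(\mu,\alpha) \in \overline{\mathcal{U}'_2} \setminus \mathcal{C}'$, I would approximate $(\mu,\alpha)$ by a sequence $(\mu^k,\alpha^k) \in \mathcal{U}'_2$ (possible because $\mathcal{U}'_2$ is dense in $\overline{\mathcal{U}'_2}$), note that $\mathcal{U}'_2 \subseteq \mathcal{B}'$ since the strict inequalities defining $\mathcal{U}'_2$ preclude the degenerate sequences listed in $\mathcal{C}'$, and then transport convergence via continuity of $\Phi^{-1}$ on $\mathcal{B}'$, with each $\Phi^{-1}((\mu^k,\alpha^k)) \in \mathcal{U}_2$ by Lemma~\ref{l:Ul2}. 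Hence $(q_0,q_1) \in \overline{\mathcal{U}_2}$.

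For the inclusion $\subseteq$, I would first observe that $\mathcal{U}'_2 \subseteq \mathcal{V}'_2 \setminus \mathcal{C}'$ yields $\mathcal{U}_2 \subseteq \mathcal{V}_2$, so closedness of $\mathcal{V}_2$ forces $\overline{\mathcal{U}_2} \subseteq \mathcal{V}_2$. For $(q_0,q_1) \in \overline{\mathcal{U}_2} \setminus \mathcal{C}$, set $(\mu,\alpha) = \Phi((q_0,q_1)) \in \mathcal{V}'_2 \setminus \mathcal{C}'$ by Lemma~\ref{l:V_closed}. The crux is to exclude $(\mu,\alpha) \in \mathcal{V}'_2 \setminus \overline{\mathcal{U}'_2}$: by Theorem~\ref{t:main2}(iii) this would give $m, n \in \mathbb{N}$ with $\sigma^m(\mu) = \alpha$ and $\sigma^n(\alpha) = \mu$, so by Lemma~\ref{l:V0} the pair takes the periodic form required by Lemma~\ref{l:Vdis2}, whose conclusion $(q_0,q_1) \notin \overline{\mathcal{U}_2}$ contradicts our hypothesis. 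I expect this isolation step---translating symbolic periodicity into topological isolation in the base plane---to be the main obstacle, although it reduces to citing the already-proved Lemmas~\ref{l:V0} and \ref{l:Vdis2}.

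Finally, for the strict inclusions, $\mathcal{U}_2 \subsetneq \overline{\mathcal{U}_2}$ is witnessed by $\Phi^{-1}((\mu,\alpha))$ for any $(\mu,\alpha) \in \overline{\mathcal{U}'_2} \setminus (\mathcal{U}'_2 \cup \mathcal{C}')$, which exists because $\overline{\mathcal{U}'_2} \setminus \mathcal{U}'_2$ is uncountable (Theorem~\ref{t:main2}(vii)) while the structured set $\mathcal{C}'$ cannot exhaust it. For $\overline{\mathcal{U}_2} \subsetneq \mathcal{V}_2$, the concrete pair $\mu = (01)^\infty$, $\alpha = (10)^\infty$ satisfies $\sigma(\mu) = \alpha$ and $\sigma(\alpha) = \mu$, lies in $(\mathcal{V}'_2 \setminus \overline{\mathcal{U}'_2}) \setminus \mathcal{C}'$ by Theorem~\ref{t:main2}(iii), and its $\Phi$-preimage lies in $\mathcal{V}_2 \setminus \overline{\mathcal{U}_2}$ by Lemmas~\ref{l:V_closed} and \ref{l:Vdis2}.
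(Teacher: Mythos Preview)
Your argument for the identity $\overline{\mathcal{U}_2}=\Phi^{-1}(\overline{\mathcal{U}'_2}\setminus\mathcal{C}')\cup\mathcal{C}$ is correct and essentially the same as the paper's: both directions rest on continuity of $\Phi^{-1}$, closedness of $\mathcal{V}_2$, and the isolation Lemma~\ref{l:Vdis2} combined with the characterization of $\mathcal{V}'_2\setminus\overline{\mathcal{U}'_2}$ from Theorem~\ref{t:main2}(iii).

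There is, however, a gap in your justification of the strict inclusion $\mathcal{U}_2\subsetneq\overline{\mathcal{U}_2}$. First, invoking Theorem~\ref{t:main2}(vii) is circular in the paper's logical order: that statement (Lemma~\ref{l:U'uncountable}) is proved \emph{after} the present lemma and in fact uses the identity you are establishing here. More importantly, even granting uncountability of $\overline{\mathcal{U}'_2}\setminus\mathcal{U}'_2$, your claim that ``the structured set $\mathcal{C}'$ cannot exhaust it'' fails: $\mathcal{C}'$ is itself uncountable, and indeed $(\overline{\mathcal{U}'_2}\setminus\mathcal{U}'_2)\cap\mathcal{C}'$ already contains every pair $(0^\infty,\alpha)$ with $\alpha\in\Sigma_2^1$ satisfying $\sigma^n(\alpha)\preccurlyeq\alpha$ for all $n$, an uncountable family (such pairs lie in $\overline{\mathcal{U}'_2}$ by Theorem~\ref{t:main2}(ii) and outside $\mathcal{U}'_2$ since $\sigma^m(0^\infty)=0^\infty$). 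A cardinality comparison therefore cannot separate $\overline{\mathcal{U}'_2}\setminus\mathcal{U}'_2$ from $\mathcal{C}'$. The fix is to exhibit a concrete witness: for instance, Lemma~\ref{l:Uldense}(iii), already available from Section~\ref{sec:5}, produces periodic pairs $((\mu_1\cdots\mu_{n_j})^\infty,(\alpha_1\cdots\alpha_{m_k})^\infty)$ with neither coordinate equal to $0^\infty$ or $1^\infty$; being periodic these are doubly-infinite and hence lie outside $\mathcal{C}'$, they belong to $\overline{\mathcal{U}'_2}$ by Lemma~\ref{l:U'}, and they fail the strict inequalities defining $\mathcal{U}'_2$ by periodicity. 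Their $\Phi^{-1}$-images then lie in $\overline{\mathcal{U}_2}\setminus\mathcal{U}_2$ by your own identity together with Lemma~\ref{l:Ul2}.
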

\begin{proof}
	We know from Lemmas \ref{l:Ul2}-\ref{l:Vdis2} that  $\mathcal{U}_2 \subsetneq \overline{\mathcal{U}_2} \subsetneq \mathcal{V}_2.$
	Then we prove the first equality, the second follows from it, as the union in the first equality is disjoint. The $"\subseteq"$ is from Theorem \ref{t:main2} (iii), and  Lemmas \ref{l:V_closed} and \ref{l:Vdis2}. It remains to prove $\Phi^{-1}(\overline{\mathcal{U}'_2}\setminus\mathcal{C}')\subseteq \overline{\mathcal{U}_2}\setminus \mathcal{C}$.
	It follows  Lemma \ref{l:Ul2} that $ \Phi^{-1}(\mathcal{U}'_2)\subseteq \overline{\mathcal{U}_2}\setminus \mathcal{C}$.
	It remains to prove $ \Phi^{-1}(\overline{\mathcal{U}'_2}\setminus(\mathcal{C}'\cup \mathcal{U}'_2))\subseteq\overline{\mathcal{U}_2}\setminus\mathcal{C}$. For $(\mu, \alpha)\in \overline{\mathcal{U}'_2}\setminus(\mathcal{C}'\cup \mathcal{U}'_2)\subseteq \mathcal{B}'$,
	$\Phi^{-1}((\mu, \alpha))=(q_0(\mu, \alpha),q_1((\mu, \alpha)))$. By Lemma \ref{l:U'}, there exist $(\mu^n, \alpha^n)\in\mathcal{U}'_2$ such that $(\mu^n, \alpha^n)\rightarrow (\mu, \alpha)$ as $n$ goes to infinity. Clearly, we have
	$$\Phi^{-1}((\mu^n, \alpha^n))=(q_0(\mu^n, \alpha^n),q_1(\mu^n, \alpha^n))\in\mathcal{U}_2 $$  by Lemma  \ref{l:Ul2} and 	$(q_0(\mu^n, \alpha^n),q_1(\mu^n, \alpha^n))\rightarrow (q_0(\mu, \alpha), q_1(\mu, \alpha))$ since the map $\Phi^{-1}$ is continuous. That is $ \Phi^{-1}(\overline{\mathcal{U}'_2}\setminus(\mathcal{C}'\cup \mathcal{U}'_2))\subseteq\overline{\mathcal{U}_2}\setminus \mathcal{C}$.
\end{proof}

\begin{lemma}\label{l:U'uncountable}
	$\overline{\mathcal{U}'_2} \setminus\mathcal{U}'_2$ is uncountable. 
\end{lemma}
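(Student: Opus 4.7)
The plan is to exhibit an uncountable subset of $\overline{\mathcal{U}'_2} \setminus \mathcal{U}'_2$ by re-using the set ${\mathcal{E}'}^N := \{(0^{2N-1}1)^\infty\} \times {\mathcal{D}'}^N$ introduced in the proof of Theorem \ref{t:hausdorff dimension} (for any fixed $N \geq 2$). Cardinality is immediate: in each element of ${\mathcal{D}'}^N$ the length-$N$ block $\alpha_{kN+1}\cdots\alpha_{(k+1)N}$ with $k \geq 2$ may be chosen independently from the $2^N - 2 \geq 2$ words in $\{0,1\}^N \setminus \{0^N, 1^N\}$, so ${\mathcal{D}'}^N$, and hence ${\mathcal{E}'}^N$, has the cardinality of the continuum. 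The task therefore reduces to verifying the inclusion ${\mathcal{E}'}^N \subseteq \overline{\mathcal{U}'_2} \setminus \mathcal{U}'_2$.

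To establish this inclusion, I would fix $\mu := (0^{2N-1}1)^\infty$ and $\alpha \in {\mathcal{D}'}^N$ and invoke the characterisation in Theorem \ref{t:main2} (ii): it suffices to show $(\mu,\alpha) \in \mathcal{V}'_2$ and to exclude the simultaneous equalities $\sigma^m(\mu) = \alpha$ and $\sigma^n(\alpha) = \mu$, for which it is enough to prove $\sigma^n(\alpha) \neq \mu$ for every $n \in \mathbb{N}$. The four lexicographic inequalities defining $\mathcal{V}'_2$ reduce to run-length estimates. On the $\mu$ side, $\sigma^{2N}(\mu) = \mu$ and $\sigma^{2N-1}(\mu) = (10^{2N-1})^\infty$ is lexicographically strictly smaller than the initial segment $1^{2N-1}0\cdots$ of $\alpha$, so $\mu \preccurlyeq \sigma^m(\mu) \preccurlyeq \alpha$ for every $m$. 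On the $\alpha$ side, the defining condition of ${\mathcal{D}'}^N$ (by the same argument used in Lemma \ref{l:45} (i)) forbids any run of $2N-1$ equal digits in $\alpha$, since such a run would enclose a complete length-$N$ block and contradict $0^N \prec \alpha_{kN+1}\cdots\alpha_{(k+1)N} \prec 1^N$. Consequently, every shift $\sigma^n(\alpha)$ with $n \geq 1$ begins with strictly fewer than $2N-1$ leading ones and with strictly fewer than $2N-1$ leading zeros; comparison with $\mu = 0^{2N-1}1\cdots$ and $\alpha = 1^{2N-1}0\cdots$ then yields $\mu \prec \sigma^n(\alpha) \preccurlyeq \alpha$. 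The same bound immediately rules out $\sigma^n(\alpha) = \mu$, since such an equality would force a run of $2N-1$ zeros in $\alpha$. Finally, $(\mu,\alpha) \notin \mathcal{U}'_2$ because the periodic identity $\sigma^{2N}(\mu) = \mu$ violates the strict inequality $\mu \prec \sigma^m(\mu)$ required by Definition \ref{def:vb2} (i).

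The main obstacle is the run-length verification inside $\alpha$: I must check that a run of zeros cannot bridge the boundary at position $2N$ (where the prescribed prefix $1^{2N-1}0$ meets the first free block) so as to attain length $2N-1$. A short case analysis bounds the maximum zero-run at or after position $2N$ by $\max(N,\,2N-2) = 2N-2$ for $N \geq 2$, with the analogous bound holding for 1-runs; every remaining step is a direct application of Theorem \ref{t:main2} (ii), the definitions, and elementary bookkeeping on periodic sequences.
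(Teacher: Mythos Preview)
Your proof is correct. Both you and the paper ultimately rest on the same construction ${\mathcal{E}'}^N=\{(0^{2N-1}1)^\infty\}\times{\mathcal{D}'}^N$ and the inclusion ${\mathcal{E}'}^N\subseteq\overline{\mathcal{U}'_2}\setminus\mathcal{U}'_2$, but the two arguments finish differently. You observe directly that ${\mathcal{D}'}^N$ (hence ${\mathcal{E}'}^N$) has continuum cardinality, and you supply a self-contained verification of the inclusion via Theorem~\ref{t:main2}\,(ii) together with the run-length bounds (at most $2N-2$ consecutive equal digits past the initial block). The paper instead takes an indirect route through the base space: it uses Lemmas~\ref{l:Ul2} and~\ref{l:UU} to identify $\overline{\mathcal{U}_2}\setminus\mathcal{U}_2=\Phi^{-1}\bigl(\overline{\mathcal{U}'_2}\setminus(\mathcal{C}'\cup\mathcal{U}'_2)\bigr)$, invokes the Hausdorff-dimension bound $\dim_H(\overline{\mathcal{U}_2}\setminus\mathcal{U}_2)\geq 1$ from Theorem~\ref{t:hausdorff dimension} to conclude uncountability in the base space, and then transfers this back to the symbolic space via the bijectivity of $\Phi$. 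Your approach is more economical and avoids both the dimension machinery and the map $\Phi$; the paper's approach has the advantage of re-using a result already proved for other purposes.
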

\begin{proof}
	Using  Lemmas \ref{l:Ul2} and \ref{l:UU} and the fact that $\Phi$ is bijective, we have
	\begin{equation}\label{e:Ul22}
		\begin{split}
			\overline{\mathcal{U}_2} \setminus\mathcal{U}_2&=\Phi^{-1}(\overline{\mathcal{U}'_2}\setminus\mathcal{C}')\setminus\Phi^{-1}( \mathcal{U}'_2)=\Phi^{-1}(\overline{\mathcal{U}'_2}\setminus(\mathcal{C}'\cup \mathcal{U}'_2)).\\
		\end{split}
	\end{equation}
	Moreover, $\overline{\mathcal{U}_2} \setminus\mathcal{U}_2$ is uncountable by Theorem \ref{t:hausdorff dimension} (see above), which together with \eqref{e:Ul22} leads to $\Phi^{-1}(\overline{\mathcal{U}'_2}\setminus(\mathcal{C}'\cup \mathcal{U}'_2))$ is uncountable. Therefore, $\overline{\mathcal{U}'_2}\setminus(\mathcal{C}'\cup \mathcal{U}'_2)$ and then $\overline{\mathcal{U}'_2} \setminus\mathcal{U}'_2$ are uncountable.
\end{proof}

\begin{lemma}\label{l:UlVldense}
	$\overline{\mathcal{U}_2} \setminus \mathcal{U}_2$ is dense in $\overline{\mathcal{U}_2}$; $\mathcal{V}_2 \setminus \overline{\mathcal{U}_2}$ is dense in $\mathcal{V}_2\setminus\mathcal{C}$.
\end{lemma}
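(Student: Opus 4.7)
My plan is to transfer the symbolic density statements already established in Section~\ref{sec:5} (namely Theorem~\ref{t:main2}(vi) and Lemma~\ref{l:dense22}) to the parameter plane, using the dictionary of Theorem~\ref{t:main3} together with the continuity of $\Phi^{-1}$ on $\mathcal{B}'$ from Theorem~\ref{t:main1}.

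First I would dispatch the second assertion, which is the cleaner one. Let $(q_0,q_1)\in\mathcal{V}_2\setminus\mathcal{C}$; Theorem~\ref{t:main3}(ii) places $(\mu,\alpha):=\Phi((q_0,q_1))$ in $\mathcal{V}'_2\setminus\mathcal{C}'$. Lemma~\ref{l:dense22} yields a sequence $(\mu^n,\alpha^n)\in\mathcal{V}'_2\setminus\overline{\mathcal{U}'_2}$ converging to $(\mu,\alpha)$. By the characterization in Theorem~\ref{t:main2}(iii), each such pair satisfies $\sigma^{m}(\mu^n)=\alpha^n$ and $\sigma^{n}(\alpha^n)=\mu^n$ for some positive integers $m,n$; these shift equations are incompatible with any tail of the form $0^\infty$ or $1^\infty$, so every $(\mu^n,\alpha^n)$ automatically lies in $\mathcal{V}'_2\setminus\mathcal{C}'\subseteq\mathcal{B}'$. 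Continuity of $\Phi^{-1}$ then gives $\Phi^{-1}((\mu^n,\alpha^n))\to(q_0,q_1)$ with all approximants inside $\mathcal{V}_2\setminus\overline{\mathcal{U}_2}$.

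For the first assertion, the same strategy handles every $(q_0,q_1)\in\overline{\mathcal{U}_2}\setminus\mathcal{C}$: Theorem~\ref{t:main3}(iii) identifies $\Phi((q_0,q_1))$ with a point of $\overline{\mathcal{U}'_2}\setminus\mathcal{C}'$; Lemma~\ref{l:dense-coutable} supplies approximants in $\overline{\mathcal{U}'_2}\setminus\mathcal{U}'_2$; and the explicit periodic construction in Lemma~\ref{l:Uldense} (sequences of the form $(\mu_1\cdots\mu_{n_j})^\infty$ and $(\alpha_1\cdots\alpha_{m_k})^\infty$, whose periods contain both digits because $\mu\in\Sigma_2^0$, $\alpha\in\Sigma_2^1$ and neither is constant by hypothesis) produces approximants with genuinely periodic tails, hence outside $\mathcal{C}'$. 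Continuity of $\Phi^{-1}$ closes this sub-case.

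The main obstacle is the boundary case $(q_0^*,q_1^*)\in\mathcal{C}$ for the first density, where $\Phi$ is not defined at the target and the only possible symbolic limit $(0^\infty,1^\infty)$ lies in $\mathcal{C}'$, outside the domain of the continuous inverse. The plan is to construct, for each $n$, explicit pairs $(\mu^n,\alpha^n)\in\overline{\mathcal{U}'_2}\setminus(\mathcal{U}'_2\cup\mathcal{C}')$ with $\Phi^{-1}((\mu^n,\alpha^n))\to(q_0^*,q_1^*)$. Generalizing the family ${\mathcal{E}'}^N$ from the proof of Theorem~\ref{t:hausdorff dimension} and the asymptotics of Lemma~\ref{l:42}, I would take $\mu^n$ beginning with a long block $0^{a_n}$ and $\alpha^n$ beginning with a long block $1^{b_n}$, with non-constant non-finite tails chosen to guarantee membership in $\overline{\mathcal{U}'_2}\setminus\mathcal{U}'_2$. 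Expanding the defining equations $\pi_{q_0,q_1}(\alpha^n)=q_0/q_1$ and $\tilde\pi_{q_0,q_1}(\mu^n)=q_1/q_0$ to leading order in $q_0^{-a_n}$ and $q_1^{-b_n}$ recovers the curve equation $q_0+q_1=q_0q_1$, and the two free parameters $a_n,b_n$ (together with the choice of tails) provide enough room, via a standard intermediate-value/continuity argument, to steer the limit to any prescribed point of $\mathcal{C}$; the hard part of the plan is verifying precisely this steering surjectivity, since the map $\Phi$ itself is discontinuous (Remark~\ref{r:monotonic}(iv)), and care is needed so that the two leading-order corrections remain independent as $a_n,b_n\to\infty$.
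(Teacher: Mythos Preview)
Your treatment of the second assertion and of the non-boundary part of the first assertion is correct and is essentially the paper's argument: pull the problem back to the symbolic space via Theorem~\ref{t:main3}, invoke the symbolic density statements (Lemmas~\ref{l:Uldense}, \ref{l:dense-coutable}, \ref{l:dense22}), check the approximants avoid $\mathcal{C}'$, and push forward by continuity of $\Phi^{-1}$.

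The divergence is in how you handle points of $\mathcal{C}$ for the first assertion. You propose an explicit asymptotic construction with two free parameters $a_n,b_n$ and an intermediate-value ``steering'' argument to hit a prescribed point of $\mathcal{C}$; you correctly identify this as the hard and unresolved part of your plan. Note also that Lemma~\ref{l:42} only yields convergence to the symmetric point $(2,2)\in\mathcal{C}$, not to an arbitrary point, so it does not by itself supply the surjectivity you need.

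The paper avoids this analytic difficulty entirely by a short topological argument. Since the complement of $\mathcal{V}_2$ in $(1,\infty)^2$ is open (proof of Lemma~\ref{l:V_closed}) and $\mathcal{C}\subset\mathcal{V}_2$, every point of $\mathcal{B}$ sufficiently close to a given $(q_0,q_1)\in\mathcal{C}$ already lies in $\mathcal{V}_2$. Because $\mathcal{V}_2\setminus\overline{\mathcal{U}_2}$ consists of isolated points (Lemma~\ref{l:Vdis2}), such nearby points must lie in $\overline{\mathcal{U}_2}\cap\mathcal{B}$. If one of them is already in $\overline{\mathcal{U}_2}\setminus\mathcal{U}_2$, you are done; otherwise it lies in $\mathcal{U}_2\setminus\mathcal{C}=\Phi^{-1}(\mathcal{U}'_2)$, and the non-boundary case you already proved supplies a nearby point of $\overline{\mathcal{U}_2}\setminus\mathcal{U}_2$. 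No control over $\Phi$ near $\mathcal{C}$ and no leading-order expansion is required.
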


\begin{proof}
	To show $\overline{\mathcal{U}_2} \setminus \mathcal{U}_2$ is dense in $\overline{\mathcal{U}_2}$, \color{black}
	it suffices to prove  $\mathcal{U}_2$ belongs to the closure of    
	$\overline{\mathcal{U}_2} \setminus \mathcal{U}_2$. \color{black} We recall from Lemma \ref{l:Ul2} that $\mathcal{U}_2=\Phi^{-1}(\mathcal{U}'_2)\cup \mathcal{C}.$ Assume $(\mu, \alpha)\in\mathcal{U}'_2$.  It follows from  Lemma \ref{l:Uldense} that  there exist $(\mu^k, \alpha^j)\in \overline{\mathcal{U}'_2} \setminus (\mathcal{U}'_2\cup\mathcal{C}')$ such that $(\mu^k, \alpha^j)\rightarrow (\mu, \alpha)\text{ when }k, j\rightarrow \infty$. Using \eqref{e:Ul22} and the inverse map $\Phi^{-1}$ is continuous, we have
	\begin{equation*}\label{e:Ul23}
		\begin{split}
			\Phi^{-1}((\mu^k, \alpha^j))=(q_0(\mu^k, \alpha^j),q_1(\mu^k, \alpha^j))\in \overline{\mathcal{U}_2} \setminus \mathcal{U}_2\rightarrow \Phi^{-1}(\mu, \alpha)\in \Phi^{-1}(\mathcal{U}'_2) \\
		\end{split}
	\end{equation*}
	as	$k, j\rightarrow \infty $.
	\color{black} Therefore, $ \Phi^{-1}(\mathcal{U}'_2)$ belongs to the closure of  $\overline{\mathcal{U}_2} \setminus \mathcal{U}_2$.	\color{black}
	
	Now we prove \color{black} $\mathcal{C}$ belongs to the closure of  
	$\overline{\mathcal{U}_2} \setminus \mathcal{U}_2$.	\color{black} Suppose  $(q_0, q_1)\in \mathcal{C}$  and $\epsilon$ is an arbitrary small number. We show that if $d_{\mathbb{R}^2}((q_0, q_1), (q'_0, q'_1))<\epsilon$ for some $(q'_0, q'_1)\in \mathcal{B}$, then  $(q'_0, q'_1)\in \mathcal{V}_2\cap\mathcal{B}$. It is true by the proof of Lemma \ref{l:V_closed}, there we have proved if $(q'_0, q'_1)\in \mathcal{B}$ but not in $\mathcal{V}_2$ and   $(q_0, q_1)$ is  close enough to  $(q'_0, q'_1)$, then $(q_0, q_1)\notin \mathcal{V}_2$. It contradicts to $(q_0, q_1)\in \mathcal{V}_2$.
	Moreover, we claim $(q'_0, q'_1)\notin \mathcal{V}_2\setminus\overline{\mathcal{U}_2}$. It is true since $\mathcal{V}_2\setminus\overline{\mathcal{U}_2}$ is a discrete set; see Lemma \ref{l:Vdis2}. Therefore, $(q'_0,q'_1)\in\overline{\mathcal{U}_2}\cap\mathcal{B}$.
	
	If $(q'_0,q'_1)\in\overline{\mathcal{U}_2}\setminus\mathcal{U}_2$, we obtain the result. If $(q'_0,q'_1)\in\Phi^{-1}(\mathcal{U}'_2)=\mathcal{U}_2\setminus \mathcal{C}$, then we may choose $(q''_0,q''_1)\in\overline{\mathcal{U}_2} \setminus \mathcal{U}_2$ to replace it, since \color{black}  $ \Phi^{-1}(\mathcal{U}'_2)$ belongs to the closure of  $\overline{\mathcal{U}_2} \setminus \mathcal{U}_2$ \color{black} by the above argument.
	
	To prove  $\mathcal{V}_2 \setminus \overline{\mathcal{U}_2}$ is dense in $\mathcal{V}_2\setminus\mathcal{C}=(\mathcal{V}_2 \setminus \overline{\mathcal{U}_2})\cup(\overline{\mathcal{U}_2}\setminus\mathcal{C})$, \color{black}it remains to prove  $\overline{\mathcal{U}_2}\setminus\mathcal{C}$ belongs to the closure of $\mathcal{V}_2 \setminus \overline{\mathcal{U}_2}$. \color{black}
	It follows from Lemmas \ref{l:Ul2} and \ref{l:UU} that
	\begin{equation}\label{e:relation1}
		\mathcal{U}_2\setminus\mathcal{C}=\Phi^{-1}(\mathcal{U}'_2)\text{ and }\overline{\mathcal{U}_2}\setminus\mathcal{C}=\Phi^{-1}(\overline{\mathcal{U}'_2}\setminus\mathcal{C}').
	\end{equation}
	We obtain from Lemma \ref{l:U'} that 
	\color{black} $\overline{\mathcal{U}'_2}\setminus\mathcal{C}'$ belongs to the closure of $\mathcal{U}'_2$. 
	This together with the continuity  of the map  $\Phi^{-1}$ and \eqref{e:relation1} implies that $\overline{\mathcal{U}_2}\setminus\mathcal{C}$ belongs to the closure of $\mathcal{U}_2\setminus\mathcal{C}$.
	Hence  it remains to prove ${\mathcal{U}_2}\setminus\mathcal{C}$ belongs to the closure of $\mathcal{V}_2 \setminus \overline{\mathcal{U}_2}$.  \color{black}
	
	We recall from Lemma \ref{l:dense22} that 	\color{black}  $\mathcal{U}'_2$ belongs to the closure of $\mathcal{V}'_2 \setminus \overline{\mathcal{U}'_2}$\color{black}. Similarly, applying the continuity  of the map  $\Phi^{-1}$  again, we have \color{black} $\Phi^{-1}(\mathcal{U}'_2)={\mathcal{U}_2}\setminus\mathcal{C}$ belongs to the closure of $\Phi^{-1}(\mathcal{V}'_2\setminus\overline{\mathcal{U}'_2})=\mathcal{V}_2 \setminus \overline{\mathcal{U}_2}$.  \color{black}
\end{proof}

Define
\begin{equation*}
	\begin{split}
		&\mathcal{A}_1:=\left([1,2]\times [1,2]\right)\cap \overline{\mathcal{U}_2},\\
		&\mathcal{A}_n:=\left([n,n+1]\times [1,\frac{n}{n-1}]\right)\cap \overline{\mathcal{U}_2},\\
		&\widetilde{\mathcal{A}_n}:=\left([1,\frac{n}{n-1}]\times [n,n+1]\right)\cap \overline{\mathcal{U}_2}
	\end{split}
\end{equation*} 
for all $ n=2,\cdots$.
\begin{lemma}\label{l:partition}
	$\overline{\mathcal{U}_2}=\mathcal{A}_1\cup\bigcup_{n=2}^\infty(\mathcal{A}_n\cup \widetilde{\mathcal{A}_n})$.
\end{lemma}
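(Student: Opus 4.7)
The inclusion $\mathcal{A}_1\cup\bigcup_{n=2}^\infty(\mathcal{A}_n\cup \widetilde{\mathcal{A}_n})\subseteq\overline{\mathcal{U}_2}$ is automatic, since each of the sets on the left is defined as an intersection with $\overline{\mathcal{U}_2}$. The plan is therefore to establish the reverse inclusion by an elementary geometric case analysis on the location of an arbitrary point $(q_0,q_1)\in\overline{\mathcal{U}_2}$ inside the admissible region.

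First I would record that by the standing assumption \eqref{eq:validcondition} we have $\mathcal{U}_2\subseteq\{(q_0,q_1)\in(1,\infty)^2:q_0+q_1\geq q_0q_1\}$, and because this region is closed in $\mathbb{R}^2$ while $\mathcal{U}_2\subseteq(1,\infty)^2$, every point of $\overline{\mathcal{U}_2}$ satisfies $q_0,q_1\geq 1$ and $q_0+q_1\geq q_0q_1$. For $q_0>1$ the latter rearranges to $q_1\leq q_0/(q_0-1)$, and symmetrically $q_0\leq q_1/(q_1-1)$. Since $f(x):=x/(x-1)=1+1/(x-1)$ is strictly decreasing on $(1,\infty)$, an immediate consequence is $\min(q_0,q_1)\leq 2$; indeed, if both $q_0>2$ and $q_1>2$, then $1/q_0+1/q_1<1$, contradicting admissibility.

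I would then split the remaining argument into three exhaustive cases. Case (i): $q_0\leq 2$ and $q_1\leq 2$, in which $(q_0,q_1)\in[1,2]^2\cap\overline{\mathcal{U}_2}=\mathcal{A}_1$. Case (ii): $q_0>2$, forcing $q_1\leq f(q_0)<2$; taking the unique $n\geq 2$ with $q_0\in[n,n+1]$, the monotonicity of $f$ yields $q_1\leq f(q_0)\leq f(n)=n/(n-1)$, placing $(q_0,q_1)\in\mathcal{A}_n$. Case (iii): $q_1>2$, which is symmetric and places $(q_0,q_1)\in\widetilde{\mathcal{A}_n}$ for the unique $n\geq 2$ with $q_1\in[n,n+1]$. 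Any boundary point with $q_0=1$ or $q_1=1$ lies trivially in Case (i) or in $\mathcal{A}_n$/$\widetilde{\mathcal{A}_n}$, since the constraint $q_0\leq n/(n-1)$ is automatic when $q_0=1$. This exhausts all possibilities and closes the argument. The proof is entirely routine; the only conceptual remark is the transfer of the admissibility constraint from $\mathcal{U}_2$ to $\overline{\mathcal{U}_2}$ via closedness, so I do not expect any substantive obstacle.
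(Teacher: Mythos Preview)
Your proof is correct and follows essentially the same elementary case analysis as the paper: both arguments reduce to the observation that the constraint $q_0+q_1\geq q_0q_1$ forces $q_1\leq q_0/(q_0-1)$ (and symmetrically), and then partition according to which integer interval contains the larger coordinate. Your organization via $\min(q_0,q_1)\leq 2$ and your explicit remark on passing the constraint to the closure are slightly cleaner than the paper's version, but the content is the same.
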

\begin{proof}
	It suffices to prove $"\subseteq"$. For each $(q_0, q_1)\in \overline{\mathcal{U}_2}$, there exists an $n\in \mathbb{N}$ such that 
	$q_0\in [n, n+1]$. Furthermore, it follows from $q_0+q_1\ge q_0q_1$ that $q_1\le q_0/(q_0-1)$. Then we obtain that $q_1\in (1,\infty)$, if $q_0\in (1,2]$; $q_1\in[1, n/(n-1)]$, if $q_0\in[n, n+1]$ for all $n=2,\cdots$. Hence, $(q_0, q_1)\in \bigcup_{n=2}^\infty\mathcal{A}_n$ if $n\geq 2$. 
	
	In the following we show that, for $n=1$
	\begin{equation}\label{e:p1}
		(q_0, q_1)\in \mathcal{A}_1\cup\bigcup_{n=2}^\infty\widetilde{\mathcal{A}_n}.
	\end{equation}
	We divide $[1,\infty)$ as $[1,\infty)=\bigcup_{n=1}^\infty [n, n+1]$. By a similar argument as above, we obtain that $q_0\in [1,n/(n-1)]$ if $q_1\in[n,n+1]$ for all $n>1.$ Therefore \eqref{e:p1} holds.	
\end{proof}

\begin{proposition}\label{p:cantor}
	$\overline{\mathcal{U}_2}$ is a countable union of Cantor sets. 
\end{proposition}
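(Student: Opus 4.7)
The plan is to use Lemma~\ref{l:partition}, which furnishes the countable decomposition
$$\overline{\mathcal{U}_2}=\mathcal{A}_1\cup\bigcup_{n=2}^\infty(\mathcal{A}_n\cup\widetilde{\mathcal{A}_n}),$$
and then to verify that each piece $A\in\{\mathcal{A}_1\}\cup\{\mathcal{A}_n,\widetilde{\mathcal{A}_n}\}_{n\ge 2}$ is a Cantor set, i.e.\ a non-empty compact perfect set with empty interior. Compactness is immediate since $A$ is the intersection of the closed set $\overline{\mathcal{U}_2}$ (a closure in $\mathbb{R}^2$) with a compact rectangle. Non-emptiness follows from the observation that $\mathcal{C}\subseteq\mathcal{U}_2\subseteq\overline{\mathcal{U}_2}$ meets each of the relevant rectangles non-trivially (for instance, $(n,n/(n-1))\in\mathcal{A}_n\cap\mathcal{C}$ and the symmetric point lies in $\widetilde{\mathcal{A}_n}$). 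Empty interior is inherited from the fact that $\overline{\mathcal{U}_2}$ is nowhere dense by Theorem~\ref{t:main4}(i).

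The substantive step is to show that each such $A$ is perfect. Given $(q_0,q_1)\in A$, the strategy is to split into cases. If $(q_0,q_1)\in\mathcal{C}$, then since $\mathcal{C}$ is a continuous one-dimensional curve contained in $\overline{\mathcal{U}_2}$ and each rectangle contains a non-degenerate arc of $\mathcal{C}$, sliding along this arc produces the required approximating sequence inside $A$. If $(q_0,q_1)\in\overline{\mathcal{U}_2}\setminus\mathcal{C}$ and lies in the topological interior of the rectangle, Theorem~\ref{t:main4}(iv)(a) (equivalently, Lemma~\ref{l:UlVldense}) supplies a sequence in $\overline{\mathcal{U}_2}\setminus\{(q_0,q_1)\}$ converging to $(q_0,q_1)$; for sufficiently late indices the members lie inside the open rectangle, hence in $A$.

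The main obstacle is the case of boundary points shared between adjacent rectangles, e.g.\ $(q_0,q_1)=(n{+}1,q_1^*)$ with $q_1^*\in(1,(n{+}1)/n)$ in $\mathcal{A}_n\cap\mathcal{A}_{n+1}$, where the density argument alone does not guarantee approximations from the correct side. To handle this, the plan is to pass to the symbolic model: by Lemma~\ref{l:UU}, write $(q_0,q_1)=\Phi^{-1}((\mu,\alpha))$ with $(\mu,\alpha)\in\overline{\mathcal{U}'_2}\setminus\mathcal{C}'$. Lemma~\ref{l:Uldense} provides two kinds of periodic approximations in $\overline{\mathcal{U}'_2}$ of $(\mu,\alpha)$: one with $\mu^{n_j}\nearrow\mu$ and $\alpha^{m_k}\searrow\alpha$, and a companion construction (by symmetry of the arguments in Lemma~\ref{l:Uldense}) with the opposite monotonicity. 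Continuity of $\Phi^{-1}$ from Theorem~\ref{t:main1} then transports these two approximating families to sequences in $\overline{\mathcal{U}_2}$ converging to $(q_0,q_1)$; by the monotonicity of Remark~\ref{r:monotonic}(iii) applied in the appropriate direction, at least one of the two families stays in $\{q_0\le n{+}1\}$ and hence in $\mathcal{A}_n$ (while the other stays in $\mathcal{A}_{n+1}$), thereby producing non-trivial approximations of $(q_0,q_1)$ within each piece containing it.

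Combining these four cases, each $A$ is a non-empty compact perfect subset of $\mathbb{R}^2$ with empty interior, i.e.\ a Cantor set, and the countable union displayed above concludes the proof. The delicate point requiring most attention is the boundary case, where one must carefully align the symbolic monotonicity with the geometric monotonicity of $(q_0,q_1)$; all remaining verifications are routine given the preceding results of Sections~\ref{sec:3}--\ref{sec:6}.
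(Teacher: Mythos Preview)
Your overall strategy coincides with the paper's: invoke the decomposition of Lemma~\ref{l:partition} and argue that each piece is a Cantor set (non-empty, compact, no isolated points, empty interior). Two points, however, need correction.

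First, citing Theorem~\ref{t:main4}(i) for empty interior is circular: in the paper's logical order, Lemma~\ref{l:thin} (the nowhere-dense statement) is proved \emph{using} Proposition~\ref{p:cantor}. The paper instead establishes empty interior directly inside the proof of the proposition, via the second clause of Lemma~\ref{l:UlVldense}: any open disk contained in $\overline{\mathcal{U}_2}$ must avoid the curve $\mathcal{C}$ (since $\mathcal{C}$ is the boundary of the region $\{q_0+q_1\ge q_0q_1\}$ containing $\overline{\mathcal{U}_2}$), hence lies in $\overline{\mathcal{U}_2}\setminus\mathcal{C}\subseteq\mathcal{V}_2\setminus\mathcal{C}$, where it must meet the dense subset $\mathcal{V}_2\setminus\overline{\mathcal{U}_2}$, a contradiction.

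Second, your treatment of the shared-boundary case has a genuine gap. Remark~\ref{r:monotonic}(iii) only asserts the forward implication (a componentwise increase of $(q_0,q_1)$ forces $\mu'\prec\mu$ and $\alpha'\succ\alpha$) and \emph{explicitly} provides a counterexample to the converse; thus from symbolic monotonicity of approximants you cannot infer that the corresponding bases $\Phi^{-1}((\mu^{n_j},\alpha^{m_k}))$ stay on a prescribed side of the line $q_0=n{+}1$. Moreover, Lemma~\ref{l:Uldense} does not furnish the ``companion construction with opposite monotonicity'' that you invoke; it only gives $\mu^{n_j}\nearrow\mu$ and $\alpha^{m_k}\searrow\alpha$. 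The paper's own proof is much terser here: it shows that $\overline{\mathcal{U}_2}$ as a whole has no isolated points (combining Lemma~\ref{l:UlVldense} with the definition of closure) and no interior points, and then concludes directly that each $\mathcal{A}_n$, $\widetilde{\mathcal{A}_n}$ is a Cantor set, without a separate boundary analysis.
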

\begin{proof}
	\color{black}  We begin by noting that $\overline{\mathcal{U}_2}$ has no isolated points, which follows from Lemma \ref{l:UlVldense}. Next, we show that $\overline{\mathcal{U}_2}$ has no interior points. \color{black} Assume on the contrary that $\overline{\mathcal{U}_2}$ has an interior point $ (q_0, q_1) $.  Then $ \overline{\mathcal{U}_2} $ contains an open disk of radius $ r $, centered at $(q_0, q_1) $. Then the concentric open disk of radius $r/2$ belongs to $\overline{\mathcal{U}_2} \setminus \mathcal{C} $, and therefore $ (q_0, q_1) $ is also an interior point of $ \overline{\mathcal{U}_2} \setminus \mathcal{C}$, which is a contradiction by Lemma \ref{l:UlVldense}. \color{black} 
	Hence, we  obtain that $\mathcal{A}_n$ for $n=1,\cdots$ and $\widetilde{\mathcal{A}_n}$ for $n=2,\cdots$ are Cantor sets. 
\end{proof}
\color{black}
\begin{lemma}\label{l:thin}
	$\mathcal{U}_2$, $\overline{\mathcal{U}_2}$  and $\mathcal{V}_2$ are  nowhere dense.
\end{lemma}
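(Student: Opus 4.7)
Since $\mathcal{U}_2\subseteq\overline{\mathcal{U}_2}\subseteq\mathcal{V}_2$ (Lemma~\ref{l:UU}), and both $\overline{\mathcal{U}_2}$ and $\mathcal{V}_2$ are closed (the latter by Lemma~\ref{l:V_closed}), it suffices to show that these two closed sets have empty interior in $(1,\infty)^2$, as then the nowhere density of all three follows at once. The empty interior of $\overline{\mathcal{U}_2}$ was actually established inside the proof of Proposition~\ref{p:cantor}, and I would simply restate it: if $(q_0,q_1)$ were an interior point of $\overline{\mathcal{U}_2}$, then $\overline{\mathcal{U}_2}$ would contain an open disk $D$ around $(q_0,q_1)$; since $\mathcal{C}=\{q_0+q_1=q_0q_1\}$ is a smooth one-dimensional curve, a smaller concentric disk $D'$ lies in $\overline{\mathcal{U}_2}\setminus\mathcal{C}\subseteq\mathcal{V}_2\setminus\mathcal{C}$. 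By Lemma~\ref{l:UlVldense} the set $\mathcal{V}_2\setminus\overline{\mathcal{U}_2}$ is dense in $\mathcal{V}_2\setminus\mathcal{C}$, so $D'$ must meet it, contradicting $D'\subseteq\overline{\mathcal{U}_2}$.

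For $\mathcal{V}_2$ the strategy is Baire category. First I will check that $\mathcal{V}_2\setminus\overline{\mathcal{U}_2}$ is countable. Indeed, by Theorem~\ref{t:main2}(iii) combined with Lemma~\ref{l:V0}, every element of $\mathcal{V}'_2\setminus\overline{\mathcal{U}'_2}$ has the form
\[
\mu=(\mu_1\cdots\mu_m\alpha_1\cdots\alpha_n)^\infty,\qquad \alpha=(\alpha_1\cdots\alpha_n\mu_1\cdots\mu_m)^\infty
\]
for some $m,n\in\mathbb{N}$ and finite binary words $\mu_1\cdots\mu_m$, $\alpha_1\cdots\alpha_n$, which yields only countably many pairs; transporting this through the bijection $\Phi$ of Theorem~\ref{t:main1} together with the decompositions of Lemmas~\ref{l:V_closed} and~\ref{l:UU} shows that $\mathcal{V}_2\setminus\overline{\mathcal{U}_2}$ is countable. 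Consequently,
\[
\mathcal{V}_2=\overline{\mathcal{U}_2}\cup(\mathcal{V}_2\setminus\overline{\mathcal{U}_2})
\]
is the union of a closed nowhere dense set (by the first step) and a countable set, hence is meager in the Baire space $(1,\infty)^2$. A meager set has empty interior, and since $\mathcal{V}_2$ is closed it is therefore nowhere dense.

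\textbf{Main obstacle.} The delicate point is the countability of $\mathcal{V}_2\setminus\overline{\mathcal{U}_2}$: one must be careful in translating the symbolic description via $\Phi^{-1}$ and verify that the boundary set $\mathcal{C}'$ does not contribute extra pairs, but this is a bookkeeping exercise once Theorem~\ref{t:main2}(iii) and Lemma~\ref{l:Vdis2} are in hand. After that, the combination of the density argument from Lemma~\ref{l:UlVldense} (for $\overline{\mathcal{U}_2}$) and Baire's theorem (for $\mathcal{V}_2$) closes the lemma.
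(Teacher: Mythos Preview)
Your argument is correct. For $\mathcal{U}_2$ and $\overline{\mathcal{U}_2}$ you reproduce exactly the reasoning embedded in Proposition~\ref{p:cantor}, which is also what the paper invokes. For $\mathcal{V}_2$ you take a slightly different route: you establish that $\mathcal{V}_2\setminus\overline{\mathcal{U}_2}$ is \emph{countable} (via Theorem~\ref{t:main2}(iii) and the bijection $\Phi$) and then appeal to Baire category, whereas the paper instead uses Lemma~\ref{l:Vdis2} to say that every point of $\mathcal{V}_2\setminus\overline{\mathcal{U}_2}$ is \emph{isolated in $\mathcal{V}_2$}. The paper's implicit argument is then: any open ball contained in $\mathcal{V}_2$ would, since $\overline{\mathcal{U}_2}$ has empty interior, meet $\mathcal{V}_2\setminus\overline{\mathcal{U}_2}$ at some point which is isolated in $\mathcal{V}_2$, a contradiction. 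Both approaches decompose $\mathcal{V}_2$ as the nowhere dense $\overline{\mathcal{U}_2}$ together with a negligible remainder; your Baire version is a touch more machinery but avoids the need to spell out the isolated-point contradiction, while the paper's version is more direct and uses only Lemma~\ref{l:Vdis2} without tracking countability through $\Phi$.
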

\begin{proof}
	\color{black}  First, $\overline{\mathcal{U}_2}$ has no interior points by Proposition \ref{p:cantor}, which directly implies that $ \mathcal{U}_2 $ and $ \overline{\mathcal{U}_2} $  are nowhere dense, as their closure both have no interior points. Similarly, by using the above facts $\overline{\mathcal{U}_2}$ has no interior points and Lemma \ref{l:Vdis2}, we establish that $ \mathcal{V}_2 $ is also a nowhere dense . \color{black}
\end{proof}
\color{black}

\begin{proof}[Proof of (vii) of Theorem \ref{t:main2} ]
	It follows from Lemma \ref{l:U'uncountable}.
\end{proof}

\begin{proof}[Proof of Theorem \ref{t:main3} ]
	It follows from Lemmas \ref{l:Ul2}, \ref{l:V_closed} and  \ref{l:UU}.
\end{proof}

\begin{proof}[Proof of (i)-(iv) of Theorem \ref{t:main4}]
	(i)-(iii) follows from Lemmas \ref{l:Ul2}, \ref{l:V_closed},
	\ref{l:UU} and \ref{l:thin}, and Proposition \ref{p:cantor}; (iv) is from Lemmas \ref{l:Vdis2}, \ref{l:UlVldense}, ${\mathcal{V}_2} \setminus\overline{\mathcal{U}_2}=\Phi^{-1}(\mathcal{V}'_2\setminus\overline{\mathcal{U}'_2})$ and ${\mathcal{V}'_2}\setminus\overline{\mathcal{U}'_2}$ is countable.
\end{proof}	

\section{Open problems}
We end this paper by formulating some open problems:
\begin{enumerate}[($i$)]
	\item Does the sets $\mathcal{U}_2$, $\overline{\mathcal{U}_2}$ and  $\mathcal{V}_2$  have null Lebesgue measure?
	\item  For the set $\overline{\mathcal{U}_2} \setminus \mathcal{U}_2$,  we showed that $\dim_H(\overline{\mathcal{U}_2} \setminus \mathcal{U}_2)\ge 1$. \color{black} What is the exact value of the dimension?\color{black}
	
\end{enumerate}

\section{Acknowledgments}
The authors thank the referees for many helpful comments and suggestions concerning the presentation of their results, and also express our gratitude to Vilmos Komornik for providing a more concise and comprehensible proof for Proposition \ref{p1}, which greatly improved the readability of the article.
\noindent

\end{document}